\def\users{world} 
\def\R{\mathbb{R}}
\def\Q{\mathbb{Q}}
\def\cA{\mathcal{A}}
\def\cE{\mathcal{E}}
\def\cF{\mathcal{F}}
\def\cI{\mathcal{I}}
\def\cN{\mathcal{N}}
\def\cT{\mathcal{T}}
\def\d{\delta}
\def\p{\partial}
\def\o{\omega}
\def\veps{\varepsilon}
\def\O{\Omega}
\def\G{\Gamma}
\def\DD{{\rm D}}
\def\wto{\rightharpoonup}
\def\Id{I}
\def\transp{{\rotatebox[origin=c]{180}{\footnotesize $\perp$}}}
\newcommand{\dv}[1]{\,{\rm d}#1}
\newcommand{\wcheck}[1]{#1\hspace{-.8ex}\mbox{\huge {\lower.45ex \hbox{$\textstyle \check{}$}}} \hspace{.5ex}}
\DeclareMathOperator{\conv}{conv}
\DeclareMathOperator{\dist}{dist}
\DeclareMathOperator{\diam}{diam}
\let\oldmarginpar\marginpar
\renewcommand\marginpar[1]{
  \oldmarginpar[\raggedleft\footnotesize #1]
  {\raggedright\footnotesize #1}}
\newtheorem{definition}{Definition}
\newtheorem{proposition}{Proposition}
\newtheorem{theorem}{Theorem}
\newtheorem{corollary}{Corollary}
\newtheorem{remark}{Remark}
\theoremstyle{plain}
\newtheorem{algorithm}{Algorithm}
\numberwithin{definition}{section}
\numberwithin{lemma}{section}
\numberwithin{proposition}{section}
\numberwithin{theorem}{section}
\numberwithin{corollary}{section}
\numberwithin{equation}{section}
\numberwithin{remark}{section}
\numberwithin{remarks}{section}
\numberwithin{example}{section}
\numberwithin{examples}{section}
\definecolor{tourquoise}{RGB}{0,170,180}	
\definecolor{darkred}{RGB}{238,34,34}		
\definecolor{darkgreen}{RGB}{0,190,0}		
\definecolor{lightgray}{RGB}{210,210,210}	
\definecolor{deepblue}{RGB}{0,0,240}		
\definecolor{darkgray}{RGB}{144,144,144}	
\definecolor{kingblue}{RGB}{64,96,224}		
\definecolor{gold}{RGB}{240,208,0}		
\definecolor{verydarkred}{RGB}{176,0,0}		
\def\be{{\bf e}}
\def\tE{\widetilde{E}}
\def\tPhi{\widetilde{\Phi}}
\def\G{\mathbb{G}}
\def\bnu{{\bf \nu}} 
\def\tby{\widetilde{\by}}
\def\W{\mathbb{W}}
\def\second{H}
\def\tr{\textrm{tr }}
\def\hcI{{\cI}}
\newcommand{\cpam}[1]{\textcolor{black}{#1}}
\begin{document}
\begin{sloppypar}
\title[Bilayer Plate Modeling and Numerics]{Bilayer Plates: Model
  Reduction, $\Gamma$-Convergent Finite Element Approximation
  and Discrete Gradient Flow} 
\author{S\"oren Bartels}
 \address{Albert-Ludwigs-Universit\"at Freiburg, Germany.}
\email{bartels@mathematik.uni-freiburg.de}
\author{Andrea Bonito$^\dag$}
\address{Texas A\&M University, College Station, TX.}
\email{bonito@math.tamu.edu}
\thanks{\mbox{}$^\dag$ Partially supported by NSF Grant DMS-1254618 and AFOSR Grant FA9550-14-1-0234.}
\author{Ricardo H. Nochetto$^*$}
\address{University of Maryland, College Park, MD.}
\email{rhn@math.umd.edu}
\thanks{\mbox{}$^*$ Partially supported by NSF Grants DMS-1109325 and DMS-1411808}
\date{\today}
\subjclass{65N12, 65N30, 74K20}
\begin{abstract}
The bending of bilayer plates is a mechanism which allows for large deformations
via small externally induced lattice mismatches of the underlying materials. 
Its mathematical modeling, discussed herein, consists of 
a nonlinear fourth order problem with a pointwise isometry
constraint. A discretization based on Kirchhoff quadrilaterals is devised 
and its $\Gamma$-convergence is proved. An iterative method that
decreases the energy is proposed and its convergence to stationary
configurations is investigated.
Its performance, as well as reduced model capabilities, are
explored via several insightful numerical experiments involving large 
(geometrically nonlinear) deformations.
\end{abstract}

\keywords{Nonlinear elasticity, bilayer bending, finite element method,
iterative solution, $\Gamma$-convergence} 
\maketitle

\section{Introduction}

The derivation of dimensionally reduced mathematical models
and their numerical treatment is a classical and challenging scientific branch 
within solid mechanics. Various models for describing the bending or membrane behavior 
of plates are available, either as linear models for the description of small displacements 
or nonlinear models when large deformations are considered; see~\cite{Ciar:97}. 
The development of
related numerical methods has mostly been concerned with the treatment of second order
derivatives and avoiding various locking effects. 
The rigorous derivation of the geometrically 
nonlinear Kirchhoff model for the description
of large bending deformations of plates from three-dimensional hyperelasticity
in~\cite{FrJaMu:02} has inspired various further results, e.g., discussing other 
energy regimes in~\cite{FrJaMu:06}, or the derivation of effective theories for 
prestressed multilayer materials in~\cite{Schm:07b}.

Bilayer plates consist of two films of different materials glued on top of each other. 
The materials react differently to thermal or electric
stimuli, thereby changing their molecular lattices. This mismatch 
allows for the development of large deformations by simple 
heat or electric actuation. Classical applications of this effect
include bimetal strips in thermostats, while modern applications use thermally 
and electrically induced bending effect to produce nanorolls,
microgrippers, and nano-tubes;
see~\cite{BALG:10,JSI:00,KLPL:05,SchEb:01,SIL:95}. 
Preventing undesirable effects such
as {\em dog-ears} formation, as reported in~\cite{AlBaSm:11,ADMA:ADMA19930050905},
motivates the mathematical prediction of bilayer bending patterns
via numerical simulation. This requires having a model as simple
  as possible to be ameanable to numerical treatment and analysis,
  but sufficiently sophisticated to capture essential nonlinear
  geometric features associated with large bending deformations.

A two-dimensional mathematical model for the bending behavior of bilayers 
has been rigorously derived from three-dimensional hyperelasticity in~\cite{Schm:07b}.
It consists of a nonconvex minimization problem with nonlinear pointwise constraint. 
The energy functional involves second order derivatives of deformations 
associated with the second fundamental form of the mid-surface.
The pointwise constraint enforces deformations
to be {\it isometries}, i.e., that length and angle relations remain unchanged by
the deformation as in the case of the bending of a piece of paper. A related
numerical method has been devised and analyzed for single layer plates
in~\cite{Bart:13,Bart15-book}. 

It is our goal to develop a reliable numerical method for the practical 
computation of large bilayer bending deformations. Our
  contributions in this paper are:

\begin{enumerate}[$\bullet$]
\item
  To present a {formal} dimension reduction model allowing
  for various effects not covered \cpam{in the corresponding
  rigorous analysis} in~\cite{Schm:07a};

\item
To propose a discretization of the mathematical model and
prove its $\Gamma$-convergence as discretization parameters tend to
zero;

\item 
To construct a gradient flow method to compute stationary
configurations;

\item
To carry out several numerical experiments
to illustrate the performance of our numerical method and explore
the nonlinear geometric effects captured by
the mathematical model. 
\end{enumerate}

In the remainder of this introduction we 
discuss the mathematical model and our main ideas to deal with the
ensuing strong nonlinearities. 

\medskip
{\em Description of bilayer plates}. We consider a geometrically nonlinear 
Kirchhoff plate model that allows for bending but not for stretching or shear. 
This selection is related to the choice of a particular energy scaling, namely
that the elastic energy is proportional to the third power of the plate
thickness $t$.  
Given a domain $\omega\subset\mathbb{R}^2$ that describes the middle
surface of the plate, the model \cpam{formally derived in Section~\ref{S:model-reduction}}
consists of minimizing the 
dimensionally reduced elastic energy
\begin{equation}\label{energy-1}
E[\by] = \frac{1}{2} \int_\o \big| \second + Z \big|^2 - \int_\o \bef \cdot \by
\end{equation}
within the set of isometries $\by:\omega\to\mathbb{R}^3$, i.e., 
mappings satisfying the identities
\begin{equation}\label{isometry}
[\nabla \by]^T\nabla\by = I_2 \qquad \Longleftrightarrow \qquad 
\p_i \by \cdot \p_j \by = \d_{ij}, \quad i,j=1,2, 
\end{equation}
in $\o$ and with prescribed values $\by=\by_D$ and $\nabla\by=\Phi_D$ on the
Dirichlet portion $\partial_D\omega$ of the boundary $\partial\omega$.
Hereafter, $I_d$ denotes the identity matrix in $\R^d$ for $d=2,3$, 
and $H$ stands for the second fundamental form of the surface
$\gamma = \by(\omega)$ parametrized by $\by$ with unit normal $\bnu$,
\[
H_{i,j} = \bnu \cdot \p_i\p_j \by, \quad 
\bnu = \partial_1\by \times \partial_2\by.
\]
The symmetric matrix $Z$ is given and can be
viewed as a spontaneous curvature so that in the absence of body
forces $\bef$ the plate is already pre-stressed.
Given the identity for isometries
\[
|\second|^2 = |D^2\by|^2,
\]
we rewrite the energy $E[\by]$ in \eqref{energy-1} as follows:
\begin{equation}\label{e:energy-used}
\tE[\by] :=\frac12 \int_\o |D^2 \by|^2  
+ \sum_{i,j=1}^2 \int_\o \p_i\p_j \by \cdot 
\Big(\frac{\p_1 \by}{|\p_1 \by|} \times \frac{\p_2 \by}{|\p_2 \by|}\Big) Z_{ij}
+ \frac12 \int_\o |Z|^2  - \int_\o \bef\cdot \by .
\end{equation}
It is tempting to simplify this expression further because 
$| \p_i \by | =1$, $i=1,2$, for isometries.
However, we refrain from doing so for stability purposes anticipating
that the isometry constraint will be later relaxed numerically.
\cpam{In particular, the normalization will enable us to prove various bounds
for the variational derivative of the energy.}
In fact, notice that the energy $\tE[\by]$ is finite for
$\by \in H^2(\omega)^3\cap W^1_\infty(\omega)^3$ such that $|\p_i \by
|\geq 1$, $i=1,2$, as well as $\bef \in L^2(\omega)^3$ and $Z \in
  L^2(\omega)^{2\times 2}$. The condition $|\p_i \by |\geq 1$, $i=1,2$,
will play a crucial role throughout this paper. We encode boundary
conditions and the isometry constraint in the set of 
admissible deformations
\begin{equation}\label{admin-set}
\mathcal{A} :=  \big\{\by\in H^2(\omega)^3:
\by|_{\partial_D\omega} = \by_D, \nabla\by|_{\partial_D\omega} = \Phi_D,
[\nabla\by]^T\nabla\by=I_2 ~\text{a.e. in }\o\big\}
\end{equation}
and define the tangent space of $\cA$ at a point $\by\in \cA$ via
\begin{equation}\label{admin_tangent}
\cF[\by] := \big\{\bw \in H^2(\omega)^3:
\bw|_{\partial_D\omega} = 0, \nabla\bw|_{\partial_D\omega} = 0,\ 
[\nabla\bw]^T \nabla \by + [\nabla \by]^T \nabla \bw =0 ~\text{a.e. in }\o
\big\}.
\end{equation}
{
Note that it is always possible to extend the functional $\tE$ to $H^1(\o)^3$ as follows:
$$
\tE[\by] := +\infty, \qquad \by \in H^1(\o)^3 \setminus \cA.
$$
}

{\em Minimizing movements.} To find stationary points of $\tE$ in $\mathcal{A}$,
we propose a gradient flow in $H^2(\omega)$, i.e. according to the $H^2$-scalar product:
if {$s\in(0,\infty)$} is a pseudo-time, we formally seek a family 
$\by \in L^2(0,\infty; \mathcal A)$ with {$\partial_s\by(s)  \in \cF[\by(s)]$ 
for $s\in (0,\infty)$ and
\[
\int_\o D^2 \big(\partial_s\by(s)\big) : D^2 \bw = - \delta\tE[\by(s),\bw] 
\qquad \forall \bw \in \cF[\by(s)].
\]
The expression $\delta \tE[\by,\bw]$ stands for the variational
derivative of $\tE$ at $\by \in \mathcal A$ in the direction 
$\bw \in \mathcal F[\by(s)]$, which we make explicit below. We note that upon
taking $\bw=\partial_s\by(s)\in\mathcal{F}[\by(s)]$, we obtain formally
\[
\frac{d}{ds} \tE[\by(s)] = 
- \int_\omega \big|D^2\big(\partial_s\by(s)\big)\big|^2 \le 0;
\]
the energy thus decreases along trajectories.} The formal gradient flow is
highly \cpam{nonlinear and requires} an appropriate interpretation. We adopt 
the concept of {\it minimizing movements} and consider 
an implicit first order time-discretization of the $H^2$-gradient flow
via successive minimization of
\[
\by \mapsto \frac{1}{2\tau} \|D^2 (\by-\by^k)\|_{L^2(\omega)}^2 + \tE[\by]
\]
in the set of all $\by \in \cA$ {to determine $\by^{k+1}$.}
\cpam{Our motivation for the use of
the $H^2$ metric to define the gradient flow is threefold.
First, it simplifies the implementation
since it leads to the same system matrix as the main part of the bilinear form associated
with the bending energy \eqref{e:energy-used}. Second, it is
sufficiently strong to provide control over discrete time
derivatives and discretization errors, such as the isometry constraint,
without imposing severe step size restrictions. Third,
it may be regarded as a damping term modeling the deceleration of
a bilayer within a viscous fluid, which in turn gives some physical interpretation.}
Since the nonlinear isometry constraint
is treated exactly, this nonconvex minimization problem is of limited
practical value. We thus propose instead a linearization of the
isometry constraint which yields a practical scheme. 

\begin{algorithm}[minimizing movement]\label{alg:grad_flow_cont}
Let $\tau >0$ be the time-step size and set $k=0$. Choose $\by^0 \in \mathcal A$.
Compute $\bv^{k+1} \in \mathcal F[\by^k]$ which is minimal for the functional
\[
\bv \mapsto \frac{\tau}{2} \|D^2 \bv \|_{L^2(\omega)}^2 + \tE[\by^k+\tau \bv],
\]
set $\by^{k+1}= \by^k + \tau \bv^{k+1}$, increase $k\to k+1$ and repeat.
\end{algorithm}
The linearized isometry condition included in the set  
$\mathcal{F}[\by^k]$ implies that for 
every admissible vector field $\bv \in \mathcal F[\by^k]$ we have that
the constraint residual of the corresponding update $\by^k + \tau \bv$ satisfies 
\[\begin{split}
[\nabla (\by^k+\tau \bv)]^\transp [\nabla (\by^k+\tau \bv)] - \Id_2 
&= [\nabla \by^k]^\transp [\nabla \by^k] -\Id_2
+ \tau^2 [\nabla \bv ]^\transp [\nabla \bv]  \geq [\nabla \by^k]^\transp [\nabla \by^k] -\Id_2,
\end{split}\]
where the inequality $A \ge B$ for square symmetric matrices $A,B$ means
  that $A-B$ is semi-positive definite; in particular the diagonal
  entries of $A$ and $B$ satisfy $a_{ii}\ge b_{ii}$ for all $i$.
Applying this formula inductively with $[\nabla \by^0]^\transp [\nabla \by^0] =\Id_2$,
we see that $[\nabla \by^{k+1}]^\transp \nabla \by^{k+1} \geq I_2$ whence $|\p_j \by^{k+1}|\ge 1$, $j=1,2$.
Therefore, $\tE[\by^{k+1}]$ is well defined and the minimization problem in
Algorithm \ref{alg:grad_flow_cont} admits a minimizer. Moreover, the 
space $\cF[\by^{k+1}]$ is well defined, even though $\by^{k+1}$ may not belong
to $\cA$. Hence, the iteration of Algorithm~\ref{alg:grad_flow_cont} can be
repeated.
Every iteration of the algorithm requires computing
a solution $\bv^{k+1} \in  \mathcal F[\by^k]$ of the following
Euler--Lagrange equation
\[
\tau \int_\omega D^2 \bv^{k+1} : D^2\bw + \delta \tE[\by^{k}+\tau \bv^{k+1},\bw] = 0
\quad\forall \bw\in\mathcal{F}[\by^k].
\]
In terms of the new iterate $\by^{k+1} = \by^k + \tau \bv^{k+1}$ this is equivalent to 
the nonlinear system of equations 
\begin{equation}\label{min-mov}
\begin{aligned}
\frac{1}{\tau} \int_\o & D^2 (\by^{k+1} - \by^k) : D^2\bw 
+ \int_\o D^2 \by^{k+1}:D^2\bw  
\\
+& \sum_{i,j=1}^2 \int_\o  \p_i\p_j \bw\cdot  
\Big(\frac{\p_1 \by^{k+1}}{|\p_1 \by^{k+1}|} \times \frac{\p_2 \by^{k+1}}{|\p_2 \by^{k+1}|}\Big) Z_{ij}
\\
+ &\sum_{i,j=1}^2\int_\o \p_i\p_j \by^{k+1}\cdot
\Big\{ \Big[\frac{\p_1 \bw }{|\p_1 \by^{k+1}|}-\frac{\p_1
  \by^{k+1}(\p_1\by^{k+1}\cdot\p_1\bw)}{|\p_1 \by|^3}\Big]
 \times \frac{\p_2 \by^{k+1}}{|\p_2 \by^{k+1}|} \Big\} Z_{ij} \\
+  &\sum_{i,j=1}^2\int_\o \p_i\p_j \by^{k+1}\cdot\Big\{\frac{\p_1\by^{k+1}}{|\p_1\by^{k+1}|}
 \times\Big[\frac{\p_2 \bw }{|\p_2 \by^{k+1}|}-\frac{\p_2 \by^{k+1}(\p_2 \by^{k+1}\cdot\p_2 \bw)}{|\p_2 \by^{k+1}|^3}\Big]\Big\} Z_{ij}
= \int_\o \bef \cdot \bw ,
\end{aligned}
\end{equation}
for all $\bw \in \mathcal \cF[\by^k]$.
We show in this paper how to discretize this system in space and
present an iterative algorithm for its approximation. We study these
algorithms and employ them to compute several equilibrium configurations.

\medskip
{\em Outline of the paper.}
The paper is organized as follows. In \S \ref{S:model-reduction}
we discuss a formal derivation of \eqref{energy-1} from three
dimensional hyperelasticity, following a suggestion of S. Conti.
In \S \ref{S:Kirchhoff} we introduce {\it Kirchhoff quadrilaterals}, 
which is a nonconforming finite element specially taylored for this
application. It is an extension of the Kirchhoff triangles
{\cite{Brae:07,Bart:13},} and possesses the degrees of freedom for the
function and its gradient at the vertices of the underlying partition
$\mathcal{T}_h$; \cpam{this facilitates imposing the isometry
constraint at the vertices}. It turns out that the space of deformations as well
as that of discrete gradients are subspaces of $H^1(\omega)$, which is
extremely convenient to discretize \eqref{min-mov}.
{
We next introduce space discretizations $\tE_h$ of $\tE$ 
and derive in \S \ref{S:convergence} their $\Gamma$-convergence to
$\tE$ in $H^1(\o)$.
As a consequence, we deduce convergence properties of discrete almost absolute minimizers.
Then, we study a practical iterative algorithm for the solution of  the space discretization of \eqref{min-mov}. 
We show convergence of such an iterative scheme, thereby
proving existence and uniqueness of our fully discrete problem. We
also prove several important properties of this gradient flow, such as
a precise control of the deviation from the isometry constraint \eqref{isometry}.}
We conclude in \S \ref{S:experiments}
with insightful numerical experiments. In fact,
we compute several configurations (such as cylinders, dog ears, corkscrews)
that are observed in experiments with \cpam{micro- and
nano-devices~\cite{AlBaSm:11,ADMA:ADMA19930050905}}.
\cpam{We emphasize that some of these effects, e.g. corkscrew shapes,
  are obtained with anisotropic spontaneous curvatures $Z$ and are therefore outside the framework developed and analyzed in \cite{Schm:07a}.}

\section{Dimension Reduction: Bilayer Plate Model}\label{S:model-reduction}
We consider a plate $\o_t := \o\times (-t/2,t/2) \subset \R^3$ 
of thickness $t>0$ and whose middle surface is
given by $\o\subset \R^2$ as illustrated in \cpam{Figure
\ref{f:model-reduction-setting}} (left).
The plate is clamped on the left edge $\partial_D\o$ and free on the rest of the
\cpam{boundary, and} its length perpendicular to $\partial_D\o$ is $L$.
The upper and lower layers are composed of materials with different
molecular lattices, for instance differing by a factor $\delta_t > 0$;
this could be achieved by thermal or electric actuation in practice.
To understand the natural scaling between $t$ and $\delta_t$, we
assume that the \cpam{middle surface of the} upper layer contracts to length
$L(1-\delta_t)$ whereas the \cpam{middle surface of the} lower layer expands to length
$L(1+\delta_t)$, so that the plate $\omega_t$ bends upwards as in 
\cpam{Figure \ref{f:model-reduction-setting}. Due to the clamped
boundary condition along one side} we imagine that for small
deformations the \cpam{lower and upper middle surfaces are given by 
$R_{\pm}=\theta^{-1} L(1 \pm \delta_t)$}, as depicted in \cpam{Figure
\ref{f:model-reduction-setting}} (right). Since we aim at capturing
bending effects in the limit $t\to0$, we impose the condition
\[
\lim_{t\to0} \frac{1}{R_+} = \lim_{t\to0} \frac{1}{R_-} =
\frac{\theta}{L} = \kappa,
\]
$\kappa>0$ being the curvature. Since 
\cpam{$t=R_+-R_- = \kappa^{-1}\big((1+\delta_t)-(1-\delta_t)\big)$,} we deduce
\begin{equation}\label{t-delta}
\lim_{t\to0} \frac{t}{\delta_t} = \frac{2}{\kappa}.
\end{equation}
It is thus natural to impose a linear scaling between $t$ and
$\delta_t$ which involves the curvature that is expected in the limit
of vanishing thickness for a pure bending problem.
\begin{figure}[ht!]
\begin{picture}(0,0)%
\includegraphics{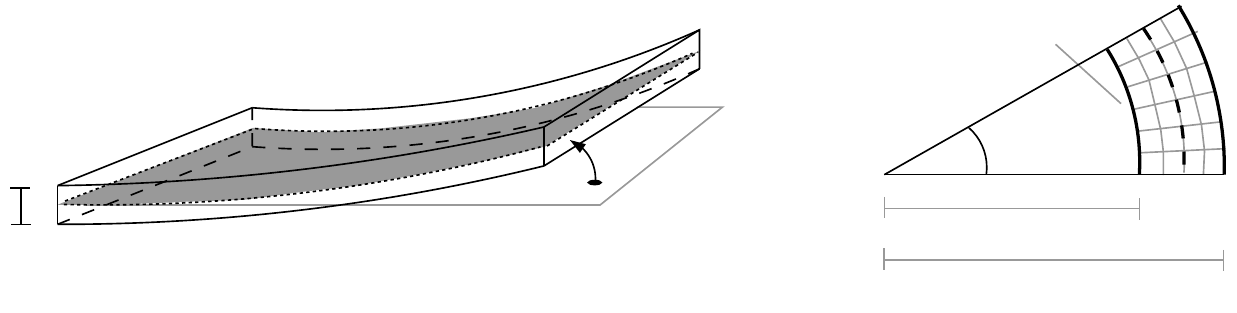}%
\end{picture}%
\setlength{\unitlength}{4144sp}%
\begingroup\makeatletter\ifx\SetFigFont\undefined%
\gdef\SetFigFont#1#2#3#4#5{%
  \reset@font\fontsize{#1}{#2pt}%
  \fontfamily{#3}\fontseries{#4}\fontshape{#5}%
  \selectfont}%
\fi\endgroup%
\begin{picture}(6246,1415)(41,-638)
\put(4859,-578){\makebox(0,0)[b]{\smash{{\SetFigFont{10}{12.0}{\familydefault}{\mddefault}{\updefault}{\color[rgb]{0,0,0}$R_+=R_-+t$}%
}}}}
\put(4404, 41){\makebox(0,0)[b]{\smash{{\SetFigFont{10}{12.0}{\familydefault}{\mddefault}{\updefault}{\color[rgb]{0,0,0}$\theta$}%
}}}}
\put(2804, 39){\makebox(0,0)[lb]{\smash{{\SetFigFont{10}{12.0}{\familydefault}{\mddefault}{\updefault}{\color[rgb]{0,0,0}$\by$}%
}}}}
\put(3188, 17){\makebox(0,0)[lb]{\smash{{\SetFigFont{10}{12.0}{\familydefault}{\mddefault}{\updefault}{\color[rgb]{0,0,0}$\o$}%
}}}}
\put(2902,-233){\makebox(0,0)[lb]{\smash{{\SetFigFont{10}{12.0}{\familydefault}{\mddefault}{\updefault}{\color[rgb]{0,0,0}$\bx'\in\o$}%
}}}}
\put(4635,-320){\makebox(0,0)[b]{\smash{{\SetFigFont{10}{12.0}{\familydefault}{\mddefault}{\updefault}{\color[rgb]{0,0,0}$R_-$}%
}}}}
\put(4818,564){\makebox(0,0)[rb]{\smash{{\SetFigFont{10}{12.0}{\familydefault}{\mddefault}{\updefault}{\color[rgb]{0,0,0}$\theta R_- = L (1-\d_t)$}%
}}}}
\put(5671,389){\makebox(0,0)[lb]{\smash{{\SetFigFont{10}{12.0}{\familydefault}{\mddefault}{\updefault}{\color[rgb]{0,0,0}$L(1+\delta_t)=\theta R_+$}%
}}}}
\put( 56,-203){\makebox(0,0)[rb]{\smash{{\SetFigFont{10}{12.0}{\familydefault}{\mddefault}{\updefault}{\color[rgb]{0,0,0}$t$}%
}}}}
\put(1561,-530){\makebox(0,0)[b]{\smash{{\SetFigFont{10}{12.0}{\familydefault}{\mddefault}{\updefault}{\color[rgb]{0,0,0}$L$}%
}}}}
\put(601,299){\makebox(0,0)[rb]{\smash{{\SetFigFont{10}{12.0}{\familydefault}{\mddefault}{\updefault}{\color[rgb]{0,0,0}$\partial_D\o$}%
}}}}
\end{picture}%
\caption{\label{f:model-reduction-setting} Two layers of thickness $t/2$ are stacked on
each other and form the bilayer plate. The undeformed {middle} surface 
is denoted by $\omega$ and
deforms into the surface~$\gamma$. \looseness-1}
\end{figure}

We consider the energy density $W:\R^{3\times 3} \times \o_t \to \R$
\begin{equation}\label{energy-density}
W(F,\bx) = \frac14 \Big|F^\transp F - (I_3\pm \d_t N(\bx'))^\transp 
(I_3\pm \d_t N(\bx'))\Big|^2
\quad\pm x_3>0,
\end{equation}
where $\bx=(\bx',x_3)$, $I_3$ denotes the identity matrix
in $\mathbb R^{3\times 3}$, and $|A|$ stands for the norm
associated to the \cpam{Frobenius} scalar product $A:B:=\sum_{i,j=1}^d
A_{ij}B_{ij} = \tr(A^T B)$.
Hereafter $\d_t$ is a parameter only depending on the thickness $t$,
describing the lattice mismatch of the two layers and
satisfying $\d_t \sim t$, whereas  
$N:\o \to \R^{3\times 3}$ is a symmetric matrix which encodes
inhomogeneities (dependence on $\bx'$) and anisotropy 
(rectangular molecular lattice rather than cubic and preferred
directions) of the underlying materials.
Together $\delta_t$ and $N(\bx')$ describe the pre-stressed
bilayer $\{\bx\in\o_t: \pm x_3 > 0\}$. When $\delta_t=0$ the two
materials composing the bilayers reduce to one, the reference
configuration is stationary in the absence of a force $\bef_t$ 
and thus stress-free, and the energy density becomes
\begin{equation}\label{simplest}
W(F) = \frac14 \Big| F^TF-I_3 \Big|^2.
\end{equation}
This function is asymptotically equivalent to the simplest energy density that obeys the 
principles of frame indifference and isotropy, namely $W(F)=W(QFR)$ for 
all $Q,R\in SO(3)$ and, see~\cite{FrJaMu:06},
\begin{equation}\label{frame-indiference}
W(F) \approx \dist^2\big(F,SO(3)\big),
\end{equation}
where $\dist$ is given by the \cpam{Frobenius} metric.
To see the relation between \eqref{simplest} and
\eqref{frame-indiference} we argue as follows. Let $F$ be close to
$SO(3)$, which is to say $F=F_0 + \epsilon F_1$ with $F_0 \in SO(3)$ and 
$F_1$ perpendicular to the tangent space $T_{F_0} SO(3)$ to $SO(3)$ at $F_0$ and
$\epsilon\ll 1$; we thus deduce $\dist^2(F,SO(3)) = \epsilon^2 |F_1|^2$. 
The space $T_{F_0} SO(3)$ can be written as
\[
T_{F_0} SO(3) = \big\{ Z: ~ F_0^T Z + Z^T F_0 = 0 \big\};
\]
this follows by differentiation of the condition $F_0^T F_0=I$.
Consequently, the normal space $N_{F_0} SO(3)$ to $T_{F_0} SO(3)$ is
\[
N_{F_0} SO(3) = \big\{Y: ~  F_0^T Y - Y^T F_0 = 0 \big\},
\]
as can be easily seen because $T_{F_0} SO(3) \oplus N_{F_0} SO(3) =
\mathbb{R}^{3\times 3}$ and
\[
Z : Y = \tr (Z^TY) = \tr\big( (Z^TF_0) (F_0^TY)\big) = - \tr \big( (Y^TF_0) (F_0^TZ)\big)
= - \tr (Y^TZ) = -Z:Y,
\]
whence $Z:Y=0$ and the subspaces are orthogonal.
Since $F_1\in N_{F_0} SO(3)$ we infer that
\[
\big| F^TF- I_3 \big|^2 = \big| (F_0 +\epsilon F_1)^T (F_0+\epsilon F_1) - I_3 \big|^2
= \big| 2\epsilon F_1^T F_0 + \epsilon^2 F_1^T F_1  \big|^2 = 4 \epsilon^2 |F_1|^2
+ o(\epsilon^2),
\]
which shows the asserted relation between \eqref{simplest} and
\eqref{frame-indiference} for small $\epsilon$.

\cpam{We are interested in the bending regime of the bilayer, which corresponds
to energies comparable to the third power of the plate thickness, 
cf.~\cite{FrJaMu:06,Schm:07b}.}
To a deformation $\bu:\o_t \to \R^3$ 
of the plate we \cpam{thus} associate the scaled hyperelastic energy 
\begin{equation}\label{cubic-scaling}
I_t[\bu] = t^{-3} \int_{\o_t} \Big( W(\nabla \bu,\bx) - \bef_t \cdot \bu
\Big) \dv{\bx}.
\end{equation}
The function $\bef_t$ is a body force, whereas
the energy density $W$ is written in \eqref{energy-density} and reads
\[
W(F,\bx) 
= \frac14 \big| F^\transp F - M \big|^2,
\]
with symmetric matrices $M=M(\bx),N=N(\bx)\in\mathbb{R}^{3\times3}$ given by
\[
M := \begin{bmatrix}
M_{11} & M_{12} \\ M_{12}^\transp & M_{22}
\end{bmatrix}
:= I_3 \pm  2 \d_t N  + \d_t^2 N^2,
\qquad
N \cpam{:=} \begin{bmatrix}
N_{11} & \bm \\ \bm^\transp & n
\end{bmatrix}.
\]
\cpam{Here and in what follows alternating signs correspond to the upper
and lower layers in which we have $x_3>0$ and $x_3<0$, respectively, i.e., 
$\pm x_3>0$.}
Moreover, $N_{11}\in \R^{2\times2}$ is symmetric, $\bm\in \R^2$,
$n\in\R$ is constant \cpam{(to keep the formal discussion simple)}, and
\[\begin{split}
M_{11} &= I_2 \pm 2\d_t N_{11} + \d_t^2 (N_{11}^2 + \bm \bm^T), \\
M_{12} &= \pm 2\d_t \bm + \d_t^2(N_{11} \bm + n \bm), \\
M_{22} &= 1 \pm 2\delta_t n + \delta_t^2(n^2+|\bm|^2).
\end{split}\]

To derive a dimensionally reduced model we assume that the actual
deformation $\bu$ of the plate, subject to boundary conditions and 
outer forces, has the form 
\[
\bu(\bx',x_3) = \by(\bx') + x_3 \bb(\bx')
\]
with a vector field $\bb:\o\to\R^3$ that is perpendicular to the surface 
$\gamma$ parametrized by $\by$, i.e., we have $\p_i \by \cdot \bb = 0$ for $i=1,2$.
In other words, fibers orthogonal to the middle surface in the reference
configuration remain normal to $\gamma$ and deform linearly. This
special form of $\bu$ is consistent with \cite{FrJaMu:02,Schm:07b} 
\cpam{for energy densities with a vanishing bulk modulus. In general, a 
more general expansion including quadratic terms in $x_3$ has to be used.}

With this ansatz we find that $\nabla \bu = [\partial_i \bu]_{i=1}^3 \in
\R^{3\times3}$ can be written as
\[
\nabla \bu  = [\nabla' \by, \bb] + x_3 [\nabla' \bb,0],
\]
where $\nabla'$ stands for the gradient with respect to $\bx'$, and deduce that
\begin{equation*}
\begin{aligned}
I_t[\bu] = &\frac{1}{t^3} \int_{\o_t} \Big(\frac14\big| (\nabla
\bu)^\transp \nabla \bu - M \big|^2 - \cpam{\bef_t}\cdot\bu \Big) \dv{\bx}
 = \frac{1}{t^3} \int_{\o_t} \left\{ \frac14 \left|
\begin{bmatrix}
(\nabla' \by)^\transp (\nabla' \by) - M_{11} & -M_{12} \\
-M_{12}^T & |\bb|^2 - M_{22}
\end{bmatrix} \right. \right. \\
\\
& + x_3
\begin{bmatrix}
 (\nabla'\bb)^\transp \nabla' \by + (\nabla' \by)^\transp \nabla' \bb & (\nabla' \bb)^\transp \bb \\ 
\bb^\transp (\nabla' \bb) & 0 
\end{bmatrix}
+ x_3^2 \left. \left.
\begin{bmatrix}
(\nabla'\bb)^\transp \nabla' \bb & 0 \\ 0 & 0 
\end{bmatrix}
\right|^2 - \bef_t\cdot\bu \right\} \dv{\bx}.
\end{aligned}
\end{equation*}

In order for this
integral to be bounded as $t\to 0$ we need that the term
$|b|^2 - M_{22}$
be at least of order $t^2$. Since we have $\d_t\sim t$, 
this is guaranteed if we enforce that 
\[
|\bb|^2 - (1 \pm 2 \d_t n) -\d_t^2(n^2 + |\bm|^2)
= |\bb|^2 - (1\pm \d_t n)^2 - \d_t^2 |\bm|^2 
= -\d_t^2 |\bm|^2,
\]
i.e., we impose the constraint
\[
|\bb|=1\pm \d_t n \qquad
\pm x_3>0.
\]
Since $\bb(\bx')=\beta(\bx') \bnu(\bx')$, where $\bnu(\bx'):=\frac{\p_1 \by(\bx') \times \p_2 \by(\bx')}{|\p_1 \by(\bx') \times \p_2 \by(\bx')|}$ is the unit normal to the surface $\gamma$ at $\by(\bx')$,
we obtain that $\beta(\bx') =1\pm \d_t n$ \cpam{which is for simplicity assumed to be}
independent of $\bx'\in\o$. This in turn implies that
$$
(\nabla' \bb)^\transp \bb = 0, \qquad \nabla' \bb = (1\pm \d_t n) \nabla' \bnu.
$$
Recalling that the first and second fundamental forms of $\gamma$
are given by
\begin{equation}\label{fundamental-forms}
G = (\nabla' \by)^\transp \nabla' \by,  \quad
\second = - (\nabla' \bnu)^\transp \nabla' \by, 
\end{equation}
and introducing
\begin{align*}
G_t &:= t^{-1} \big((\nabla' \by)^\transp(\nabla' \by) - M_{11}\big) =
t^{-1} \big(G - I_2 \mp 2 \d_t N_{11} - \d_t^2 (N_{11}^2 + \bm \bm^\transp) \big), \\
K_t &:= (\nabla' \bb)^\transp \nabla' \bb = (1\pm \d_t n)^2 (\nabla' \bnu)^\transp \nabla ' \bnu,
\end{align*}
we infer that
\begin{equation*}
I_t[\bu] = \frac{1}{t^3} \int_{\o_t} \left\{ \frac14 \left|
\begin{bmatrix}
tG_t -2x_3 (1\pm \d_t n)\second +x_3^2 K_t & -M_{12} \\
-M_{12}^T & -\d_t^2 |\bm|^2
\end{bmatrix}\right|^2 - \bef_t\cdot\bu \right\} \dv\bx.
\end{equation*}
Retaining only the terms of order \cpam{$\d_t^2$} or lower, because
the higher order terms vanish in the limit $t\to0$, we obtain
\[
\begin{split}
I_t[\bu] &\approx
\frac{1}{t^3} \int_{\o_t} 
\left\{\frac14\Big( t^2 |G_t|^2 + 4 x_3^2 (1\pm \d_t n)^2 |\second|^2 + x_3^4 |\cpam{K_t}|^2 \right.\\
& \qquad \left.
 -4 t x_3 (1\pm \d_t n) G_t : \second 
 + 2 t x_3^2 G_t : \cpam{K_t} 
 - 4 x_3^3 (1\pm \d_t n) \second:\cpam{K_t}  + 8 \d_t^2 |\bm|^2 \Big) -
 \bef_t\cdot\bu \right\} \dv\bx.
\end{split}
\]
To ensure that the first term in the integral remains bounded
as $t\to 0$ we require that 
\[
G=I_2,
\] 
that is the parametrization $\by$ of $\gamma$ is an {\it isometry}.
Consequently, we obtain
\[
G_t = \mp 2 t^{-1} \d_t N_{11} - t^{-1} \d_t^2 P,
\qquad
P := N_{11}^2 + \bm \bm^\transp.
\]
Since the quantities $N_{11}, P, \second, K, \bm$ are independent of $x_3$,
we carry out the integration over $x_3\in (-t/2,t/2)$ and deduce
that
\begin{equation*}
\begin{aligned}
\int_{-t/2}^{t/2} t^2 |G_t|^2 \dv x_3 & = 4t \d_t^2 |N_{11}|^2 + t\d_t^4 |P|^2,
\\
\int_{-t/2}^{t/2} 4 x_3^2 (1\pm \d_t n \cpam{)^2} |\second|^2 \dv x_3 & = \frac{t^3}{3}
(1+\d_t^2 n^2) |\second|^2,
\\
\int_{-t/2}^{t/2} x_3^4 |K|^2 \dv x_3 & = \frac{t^5}{80} |K|^2,
\\
\int_{-t/2}^{t/2}-4 t x_3 (1\pm \d_t n) G_t:\second \dv x_3 & = 2 t^2 \d_t N_{11}:\second 
+ t^2 \d_t^3 n P:\second,
\\
\int_{-t/2}^{t/2} 2 t x_3^2 G_t:K \dv x_3 &= - \frac{t^3\d_t^2}{6} P:K,
\\
\int_{-t/2}^{t/2} 4 x_3^3 (1\pm \d_t n) \second:K \dv x_3 &= \frac{t^4 \d_t}{8}
n \second:K,
\\
\int_{-t/2}^{t/2} 8\d_t^2 |\bm|^2 \dv x_3 &= 8 t\d_t^2 |\bm|^2.
\end{aligned}
\end{equation*}
It remains to examine the forcing term $\bef_t$, which we assume
to be of the form
\[
\bef_t(\bx',x_3) = t^2 \hat\bef(\bx',x_3)
\]
to give a nontrivial limit. In fact, if we let
\[
\bef (\bx') := \frac{1}{t} \int_{-t/2}^{t/2} \bef(\bx',x_3) \dv x_3,
\quad
\bg (\bx') :=  \frac{1}{t} \int_{-t/2}^{t/2} x_3 \bef(\bx',x_3) \dv x_3
\]
then the contribution to the energy due to the body force becomes
\[
\frac{1}{t^3}\int_{\omega_t} \bef_t\cdot\bu \dv\bx = \int_\omega
\Big(\bef(\bx')\cdot\by(\bx')  + \bg(\bx')\cdot\bb(\bx') \Big) \dv\bx'.
\]
Inserting these expressions back into $I_t[\bu]$, setting
\[
\lambda := \lim_{t\to 0} \frac{\d_t}{t} \in \mathbb R
\]
and keeping only terms of order one in $t$, we readily obtain 
\[
\lim_{t\to 0} I_t[\bu] \approx
\frac{1}{12} \int_\o \Big( |\second|^2 + 6 \lambda N_{11}:\second \Big) \dv \bx'
+ \lambda^2 \int_\o \Big( |N_{11}|^2 + 2 |\bm|^2 \Big) \dv \bx'
- \int_\o \bef\cdot\by \dv \bx'.
\]
If we further denote
\begin{equation}\label{spontaneous-curvature}
Z := 3\lambda N_{11}
\end{equation}
and ignore the second integral, which is constant and so independent of
the surface $\gamma$, we see that the dimensionally reduced model is governed
by the energy
\[
E[\by] =  \frac{1}{12} \int_\o \big| \second + Z \big|^2\dv \bx'
- \int_\o \bef\cdot\by \dv \bx',
\]
where the parametrization $\by:\o\to\R^3$ of the surface $\gamma$ is an
isometry, namely it satisfies \eqref{isometry}.
\cpam{We remark that the derivation of the dimensionally reduced model
can be carried out rigorously in the sense of $\Gamma$-convergence 
for a large class of isotropic energy densities~\cite{Schm:07b}. The
only required assumptions are the cubic energy scaling
\eqref{cubic-scaling} and the proportionality $\d_t \sim t$ of \eqref{t-delta}.} 
The quantity $-Z$ acts as a spontaneous curvature for the bending
energy $E[\by]$ and \cpam{specifies} properties of the bilayer material. If the
material is homogeneous and isotropic, then $Z=\alpha I_2$ with
$\alpha\in\R$; we refer to~\cite{Schm:07a} for a discussion of the 
qualitative properties of minimizers. 
On the other hand, the material could possess inhomogeneities and
anisotropies which are $\bx'$-dependent and are encoded in $N_{11}(\bx')$;
we discuss some options together with numerical experiments in \S
\ref{S:experiments}. We
observe that the components $n$ and $\bm$ of $N$ play no role in the
reduced energy.

We assume that the plate is subject to clamped boundary
  conditions on a portion $\partial_D\o$ of $\partial \o$
$$
\by = \by_D, \qquad \nabla \by = \Phi_D \qquad \text{on} \quad \partial_D \o,
$$
where $\by_D:\omega \rightarrow \mathbb R^3$, 
$\Phi_D:\omega \rightarrow \R^{3\times2}$ are sufficiently smooth,
and $\Phi_D=\nabla\by_D$ is an isometry in $\o$, i.e.
$\Phi_D(\bx')^T\Phi_D(\bx')=I_2$ for $\bx'\in\o$.
The variational formulation of the reduced plate model consists of
finding $\by\in\mathcal{A}$, defined in \eqref{admin-set}, such that 
\begin{equation}\label{e:energy}
E[\by] = \frac{1}{2} \int_\o \big| \second + Z \big|^2\dv \bx' - \int_\o \bef \cdot \by \dv{\bx'}.
\end{equation}
is minimized, where $H$ is the second fundamental form defined in
\eqref{fundamental-forms} and $Z$ the spontaneous curvature of
\eqref{spontaneous-curvature}. The new scaling $\frac12$ is immaterial
and just set for convenience. Existence of solutions of the constrained 
minimization problem is a consequence of the direct method in the calculus
of variations.

\section{Kirchhoff Elements on Quadrilaterals}\label{S:Kirchhoff}
The fourth order nature of \eqref{e:energy} and the pointwise
constraint \eqref{isometry} on gradients in the bilayer
bending problem reveal that a careful choice of finite element spaces 
for spatial discretization is mandatory.
\cpam{To avoid $C^1$-elements, which are natural in $H^2$ but difficult to implement},
we employ a nonconforming method that introduces a discrete gradient
operator and which allows us to impose the constraint \eqref{isometry}
at the vertices
of elements. The components of the discrete deformation $\by_h$ belong
to an $H^1$ conforming finite element space $\W_h$ and its discrete gradients
to another $H^1$ conforming finite element space $\G_h$. The degrees of freedom
of our numerical method are the deformations and the deformation gradients
at the nodes of the \cpam{partition $\cT_h$ of $\omega$ into rectangles}
which are the vertices of elements. 

\begin{definition}\label{def:triang}
For a conforming partition $\cT_h$ of $\o$ into \cpam{shape-regular, closed
rectangles} with vertices $\cN_h$ 
and edges $\cE_h$ we define the midpoints of elements and edges,
\cpam{the diameters of elements, and the maximal meshsize} by 
\[
\bz_T := \frac14 \sum_{\bz\in \cN_h\cap T} \bz, \quad 
\bz_E := \frac12 \sum_{\bz\in \cN_h\cap E} \bz, \quad
\cpam{h_T := \diam(T), \quad
h = \max_{T\in \cT_h} h_T}
\]
for all $T\in \cT_h$ and all $E\in \cE_h$.
For every $E\in \cE_h$ we let $\bn_E,\bt_E\in \R^2$ be unit
vectors such that $\bn_E$ is normal to $E$ and $\bt_E$ is tangent to $E$. We denote by
$\bz_{E}^1,\bz_{E}^2 \in \cN_h \cap E$ the end-points of $E$ so that
$E=\conv\{\bz_{E}^1,\bz_{E}^2\}$.
\end{definition}

The following definition modifies the well known Kirchhoff triangles 
\cite{BaBaHo:80,Brae:07} to 
quadrilaterals and is related to \cite{BaTa:82}. 
We let $\Q_r(T)$ \cpam{and $\mathbb{P}_r(T)$} denote the set of 
polynomials on $T$ of partial degree $r$ on each variable \cpam{and
of total degree $r$, respectively.}

\begin{definition}
Let $\cT_h=\{T\}$ be a partition of $\o \subset \R^2$ into 
\cpam{rectangles} as in Defintion~\ref{def:triang}.

(i) {\bf Discrete spaces}: Define 
\[\begin{split}
\W_h &:= \big\{ w_h \in C(\overline{\o}): 
 \, w_h|_T \in \Q_3(T) ~ \forall T\in \cT_h, \ 
\nabla w_h \mbox{ continuous in } \cN_h, \\ 
& \qquad \qquad \qquad  \nabla w_h(z_E) \cdot \bn_E
= \frac12 \big( \nabla w_h(z_{E}^1)+\nabla w_h(z_{E}^2)\big) \cdot
\bn_E ~\forall E\in \cE_h, \big\}, \\
\G_h &:= \big\{ \psi_h \in C(\overline{\o})^2:  \, \psi_h|_T \in
\Q_2(T)^2 ~\forall T\in \cT_h \big\}.
\end{split}\]
(ii) {\bf Interpolation operator}:
Let \cpam{$\hcI_h^2: H^2(\o)^2\to \G_h$} be defined by
\[\begin{aligned}
\hcI_h^2 \bpsi (\bz)  &= \bpsi(\bz) &&  \mbox{ for all $\bz \in \cN_h$}, \\
\hcI_h^2 \bpsi (\bz_E) &= \bpsi(\bz_E) && \mbox{ for all $E\in \cE_h$}, \\
\hcI_h^2 \bpsi (\bz_T) &= \frac14 \sum_{z\in \cN_h\cap T} \bpsi(\bz) && \mbox{ for all $T\in \cT_h$}.
\end{aligned}\]
\cpam{The operator $\hcI_h^2$ is also well-defined for
  discrete vector fields $\psi \in \nabla \W_h$.} \\
(iii) {\bf Discrete gradients}: Define \cpam{$\nabla_h : H^3(\o)\to \G_h$ by 
\[
\nabla_h w := \hcI_h^2 \big[\nabla w\big].
\]
The operator $\nabla_h$ is also well-defined for discrete functions $w \in \W_h$.}
\end{definition}

\cpam{Since the set of vertices, midpoints of edges, and element midpoint is
unisolvent for the polynomial space $\Q_2(T)$, the interpolation operator $\hcI_h^2$ is
well-defined. However, $\hcI_h^2$
differs from the canonical nodal interpolation operator because of the
condition at the element midpoints. The latter is imposed for practical purposes
but makes $\hcI_h^2$ inexact over $\Q_2(T)$.
Nevertheless, $\hcI_h^2$ is exact over $\Q_1(T)$ so that
the Bramble-Hilbert lemma implies 
\begin{equation}\label{e:nodal_interp_2}
  \|\bpsi  - \hcI_h^2 \bpsi \|_{L^p(T)} + h_T \| \nabla \bpsi
  - \nabla \hcI_h^2 \bpsi \|_{L^p(T)}
\le c_2 h_T^2 \|D^2 \bpsi \|_{L^p(T)}
\end{equation}
for all $\bpsi \in W^2_p(T)^2$ and $2\le p\le\infty$. 
The operator $\hcI_h^2$ is also well-defined} on $\nabla \W_h$ since for every
$w_h \in \W_h$ we have that $\nabla w_h$ 
is continuous at the nodes $\cN_h$ and at the midpoints of edges. 
We will also need the canonical nodal interpolation operator 
$\hcI_h^3: H^3(\o) \to \W_h$, which \cpam{is defined by evaluating 
function values and derivatives at vertices of elements and normal
derivatives at midpoint of edges by averaging.
Since $\hcI_h^3$ is exact for $w\in\mathbb{P}_2(T)$,
the Bramble-Hilbert lemma yields
\begin{equation}\label{e:nodal_interp_3}
\| w  - \hcI_h^3 w \|_{L^p(T)} + h_T \| \nabla w  - \nabla \hcI_h^3 w \|_{L^p(T)}
+ h_T^2 \| D^2 w - D^2 \hcI_h^3  w \|_{L^p(T)}
\le c_2 h_T^3 \|D^3 w \|_{L^p(T)}
\end{equation}
for all $w \in W^3_p(T)$ and $2\le p \le \infty$.
A less obvious but useful stability bound reads
\begin{equation}\label{stab-bound}
  \|D^3\hcI_h^3  w \|_{L^p(T)} \le c \|D^3 w\|_{L^p(T)}
  \qquad\textrm{for all } w\in W^3_p(T) \text{ and } 2 \le p \le \infty.
\end{equation}
To see this, we first write
$\hcI_h^3  \big(w-q\big)=\big(w-q\big) + \big(\hcI_h^3  w - w\big)$
for all $q\in\mathbb{P}_2(T)$.
Therefore, invoking an inverse estimate together with $D^3q=0$, we
use \eqref{e:nodal_interp_3} to obtain
\begin{equation*}
  \begin{aligned}
    \|D^3 \hcI_h^3  w \|_{L^p(T)} &\le c h_T^{-2} \|\nabla\hcI_h^3 (w-q) \|_{L^p(T)}
    \\
    & \le c h_T^{-2} \|\nabla(w-q)\|_{L^p(T)} +
     c h_T^{-2} \|\nabla(\hcI_h^3 w - w) \|_{L^p(T)}
    \le c \|D^3  w \|_{L^p(T)},
  \end{aligned}  
\end{equation*}  
provided that $q$ is appropriately chosen, e.g., as a suitable Lagrange
interpolant of $w$ over $\mathbb{P}_2(T)$. Hereafter, $c>0$ indicates
a generic geometric constant that may change at each occurrence,
depends on mesh shape regularity, but is
independent of the functions and parameters involved.}

\begin{remark}[nodal degrees of freedom]
\rm
The degrees of freedom in $\W_h$ are only the function values 
at the vertices $(w_h(\bz):\bz\in \cN_h)$, 
and the gradients at the vertices $(\nabla w_h(\bz):\bz \in \cN_h)$.
In fact, the remaining four degrees of freedom of the finite element $\Q_3(T)$ 
are the normal components $\nabla w_h(\bz_E)$ of the gradients
at the midpoints $\bz_E$  of edges $E$ which are fixed as the averages
of directional derivatives $\nabla w_h(\bz_E^i)\cdot\bn_E$ 
at the endpoints $\bz_E^i$ of the edges. 
The values $\nabla w_h(\bz_E)\cdot\bt_E$ can be written in terms
  of $w_h(\bz_E^i)$ and $\nabla w_h(\bz_E^i)\cdot\bt_E$ for
  $i=1,2$. The matrix realizing the operator $\nabla_h : \W_h\to\G_h$
    elementwise is required for the implementation of Kirchhoff elements.
\end{remark}

\begin{figure}[htb]
\begin{center}
\begin{picture}(0,0)%
\includegraphics{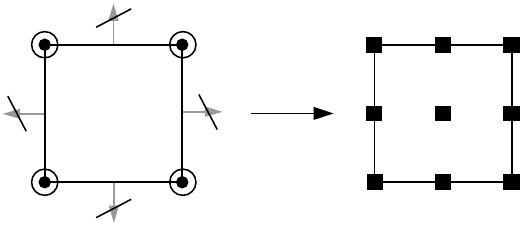}%
\end{picture}%
\setlength{\unitlength}{3947sp}%
\begingroup\makeatletter\ifx\SetFigFont\undefined%
\gdef\SetFigFont#1#2#3#4#5{%
  \reset@font\fontsize{#1}{#2pt}%
  \fontfamily{#3}\fontseries{#4}\fontshape{#5}%
  \selectfont}%
\fi\endgroup%
\begin{picture}(2503,1078)(509,-602)
\put(1904, 59){\makebox(0,0)[b]{\smash{{\SetFigFont{9}{10.8}{\familydefault}{\mddefault}{\updefault}{\color[rgb]{0,0,0}$\nabla_h$}%
}}}}
\end{picture}%
\end{center}
\caption{\label{fig:discre_grad}
\small
Schematic description of the discrete gradient operator $\nabla_h$.
Filled dots represent values of functions, circles of gradients,
arrows of normal components, and boxes of vector fields. The
normal derivatives in the cubic space on the left are eliminated
via linearity.}
\end{figure}

\begin{remark}[subspaces of $H^1(\omega)$]
\rm
Enforcing degrees of freedom of $\W_h$ at vertices $\bz\in\cN_h$ for
both function values and gradients implies global continuity; thus
$\W_h\subset H^1(\omega)$. Likewise, the degrees of freedom of $\G_h$
at vertices and midpoints of edges guarantee global continuity; hence
$\G_h\subset H^1(\omega)^3$.
\end{remark}

We collect important properties of the discrete gradient operator in the
following proposition. 

\begin{proposition}[properties of $\nabla_h$]
\cpam{Let $2\le p\le \infty$.}
There are constants $c_i$, $i=1,...,4$, independent of $h$ such that
the following properties of the discrete gradient $\nabla_h$ are valid:

\smallskip
\cpam{
(i) For all $w_h\in \W_h$ we have
\begin{equation}\label{norm-equiv}
c_1^{-1} \|\nabla w_h \|_{L^p(\omega)} \le \|\nabla_h w_h \|_{L^p(\omega)}  \le c_1 \|\nabla w_h\|_{L^p(\omega)};
\end{equation}
(ii) For all $w_h \in \W_h$ and $T\in \cT_h$ we have 
\begin{equation}\label{hessian-equiv}
c_2^{-1} \|D^2 w_h\|_{L^p(T)} \le \|\nabla\nabla_h w_h\|_{L^p(T)}
\le c_2  \|D^2 w_h\|_{L^p(T)};
\end{equation}
(iii) For all {$w \in W^3_p(T)$} and $T\in \cT_h$ we have 
\begin{equation}\label{approx}
\|\nabla w - \nabla_h w \|_{L^p(T)} + h_T \| D^2 w - \nabla \nabla_h w \|_{L^p(T)}
\le c_3 h_T^2 \|D^3 w\|_{L^p(T)};
\end{equation}
(iv) For all $w_h \in \W_h$ and $T\in \cT_h$ we have
\begin{equation}\label{discrete-approx}
\|\nabla w_h - \nabla_h w_h \|_{L^p(T)} \le c_4 h_T \|\nabla\nabla_h w_h \|_{L^p(T)}.
\end{equation}
}
\end{proposition}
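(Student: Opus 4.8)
The plan is to prove each of the four estimates by reducing to a single reference element and invoking scaling (Bramble--Hilbert / norm equivalence on finite-dimensional spaces), since $\nabla_h$ acts elementwise and the rectangles are shape-regular. Throughout, I would fix a reference square $\widehat T$ and the affine (in fact, axis-aligned dilation/translation) map $F_T:\widehat T\to T$; because $T$ is a rectangle, $F_T$ is a diagonal affine map, so the Jacobian factors are controlled by $h_T$ up to shape-regularity constants, and pullbacks of $\Q_r$ spaces stay in $\Q_r$.

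\textbf{Part (ii) and (iv) first.} For \eqref{hessian-equiv} I would work on a single $T$: both $w_h\mapsto D^2 w_h|_T$ and $w_h\mapsto \nabla\nabla_h w_h|_T$ are linear maps on the finite-dimensional space $\Q_3(T)$ (restricted to the relevant subspace), and I need to check that $\|\nabla\nabla_h w_h\|_{L^p(T)}$ does not vanish unless $D^2 w_h=0$ on $T$, i.e. unless $w_h$ is affine on $T$. This is the key algebraic point: $\nabla_h w_h = \hcI_h^2[\nabla w_h]$ reproduces $\nabla w_h$ at the four vertices and (by the midpoint rule) its normal component at edge midpoints, and $\hcI_h^2$ is exact on $\Q_1$; if $\nabla\nabla_h w_h=0$ then $\nabla_h w_h$ is constant on $T$, which forces $\nabla w_h$ to be constant at all vertices, and combined with the structure of $\W_h$ on $T$ this forces $w_h$ affine. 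Once injectivity is established, equivalence of norms on the finite-dimensional space plus Piola-type scaling (each side scales like $h_T^{d/p-2}$ times the reference quantity) gives \eqref{hessian-equiv} with $h$-independent constants. For \eqref{discrete-approx}, on $\widehat T$ the difference $\nabla w_h-\nabla_h w_h$ vanishes whenever $\nabla w_h\in\Q_1$ (exactness of $\hcI_h^2$ on $\Q_1$), hence it is bounded by $\|\nabla\nabla w_h\|$ on the reference element through another finite-dimensional-space inequality; scaling back inserts exactly one power of $h_T$, and then \eqref{hessian-equiv} replaces $D^2w_h$ by $\nabla\nabla_h w_h$.

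\textbf{Part (iii).} This is the consistency estimate for smooth $w\in W^3_p(T)$. Write $\nabla_h w=\hcI_h^2[\nabla w]$ and apply \eqref{e:nodal_interp_2} to $\bpsi=\nabla w\in W^2_p(T)^2$: the first term of \eqref{e:nodal_interp_2} gives $\|\nabla w-\nabla_h w\|_{L^p(T)}\le c h_T^2\|D^2(\nabla w)\|_{L^p(T)}=c h_T^2\|D^3 w\|_{L^p(T)}$, and the second term gives $h_T\|D^2 w-\nabla\nabla_h w\|_{L^p(T)}\le c h_T^2\|D^3 w\|_{L^p(T)}$. So \eqref{approx} is essentially a restatement of \eqref{e:nodal_interp_2} with $\bpsi=\nabla w$; nothing new is needed beyond noting $\hcI_h^2$ is well-defined on $\nabla W^3_p(T)\subset W^2_p(T)^2$.

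\textbf{Part (i).} For the norm equivalence \eqref{norm-equiv} I would again localize, but here the lower bound $\|\nabla w_h\|\le c\|\nabla_h w_h\|$ is the delicate direction and should be argued globally or at least with care, since $\nabla w_h$ is genuinely discontinuous across edges while $\nabla_h w_h$ is continuous. The upper bound $\|\nabla_h w_h\|_{L^p(T)}\le c\|\nabla w_h\|_{L^p(T)}$ follows elementwise from stability of $\hcI_h^2$ on $\Q_2\supset\nabla\Q_3$ by the usual scaling argument (the interpolation nodes are unisolvent, so $\hcI_h^2$ is bounded on the reference element and scaling is neutral for the $L^p$-to-$L^p$ estimate of a gradient-valued quantity). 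For the lower bound I would use that $\nabla_h w_h$ and $\nabla w_h$ agree at all vertices of $T$, so $\|\nabla w_h-\nabla_h w_h\|_{L^p(T)}\le c h_T\|\nabla\nabla w_h\|_{L^p(T)}$ — precisely \eqref{discrete-approx} — and then combine with an inverse estimate $\|\nabla\nabla w_h\|_{L^p(T)}\le c h_T^{-1}\|\nabla_h w_h\|_{L^p(T)}$ coming from \eqref{hessian-equiv}; absorbing the resulting $c\|\nabla_h w_h\|_{L^p(T)}$ term yields $\|\nabla w_h\|_{L^p(T)}\le c\|\nabla_h w_h\|_{L^p(T)}$ elementwise, and summing the $p$-th powers gives \eqref{norm-equiv}. \emph{The main obstacle} I anticipate is the unisolvence/injectivity bookkeeping in (ii): one must verify carefully that the Kirchhoff constraints on $\W_h$ (normal gradient at edge midpoints being the average of endpoint normal derivatives) together with the definition of $\hcI_h^2$ do not conspire to annihilate $\nabla\nabla_h w_h$ for some non-affine $w_h$; this is a finite, checkable linear-algebra computation on the reference element, but it is where all the structure of the element enters, and once it is settled everything else is standard scaling.
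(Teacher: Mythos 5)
Your plan is, in substance, the paper's own proof: (iii) is exactly the paper's argument (apply \eqref{e:nodal_interp_2} to $\bpsi=\nabla w$, noting $\nabla_h w=\hcI_h^2[\nabla w]$), (ii) is the same kernel-plus-finite-dimensionality-plus-scaling argument, and your (i) and (iv) are harmless variants: the paper derives (iv) from \eqref{approx}, an inverse estimate and \eqref{hessian-equiv}, and gets the lower bound in (i) directly from the kernel equivalence $\nabla w_h=0\Leftrightarrow\nabla_h w_h=0$ (via the cubic-on-edge argument over the skeleton) rather than from your triangle-inequality chain $\|\nabla w_h\|\le\|\nabla_h w_h\|+c\,h_T\|\nabla\nabla_h w_h\|\le c\|\nabla_h w_h\|$; both routes are fine, and yours has the small advantage of being purely elementwise.

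The one substantive step you defer, the injectivity check in (ii), is however stated incorrectly in your sketch: knowing that $\nabla w_h$ equals the same constant at the four vertices does \emph{not} force $w_h$ to be affine. On the unit square the function $w(x,y)=x^2(3-2x)\in\Q_3(T)$ satisfies the edge-midpoint normal-derivative constraint defining $\W_h$ and has $\nabla w=0$ at all four vertices, yet $D^2w\neq0$. The paper's argument uses the edge-midpoint values of $\nabla_h w_h$ as well: if $\nabla\nabla_h w_h=0$ on $T$, then $\nabla w_h$ equals one and the same constant vector at the vertices \emph{and} at the midpoints $\bz_E$; in particular the tangential derivative of $w_h$ along each edge is a quadratic taking the same value at the two endpoints and the midpoint, hence is constant on that edge, so $w_h$ is affine on each edge with the correct slope, and together with the prescribed normal derivatives at the $\bz_E$ all $16$ local degrees of freedom of $w_h|_T$ coincide with those of an affine function, whence $D^2w_h=0$. (The same edge-midpoint, cubic-on-edge reasoning is what the paper uses for the kernel equivalence in (i).) With this verification supplied — it is not an abstract unisolvence fact but uses precisely the midpoint data of $\hcI_h^2$ — your scaling and finite-dimensional norm-equivalence steps go through exactly as in the paper.
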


\begin{proof}
(i) Given $w_h\in \W_h$ the function $\bpsi_h=\nabla_h w_h \in \G_h$ is well-defined
and the operator $\nabla_h :\W_h \to \G_h$ is linear, whence $\nabla w_h=0$
implies $\nabla_h w_h=0$. Conversely, if $\nabla_h w_h =0$ then we have that 
$\nabla w_h(\bz)=0$ for all $\bz\in \cN_h$ and $\nabla w_h (\bz_E) =0$ for all 
$E\in \cE_h$.
Since the tangential derivatives of $w_h$ vanish at the endpoints
  and midpoints of $E\in\cE_h$, and $w_h$ is cubic on $E$, we deduce
  that $w_h$ is constant on $E$. The fact that functions in $\W_h$ are
  globally continuous implies that $w_h$ is constant over the skeleton
  $\cE_h$ of $\cT_h$. Let $T\in\cT_h$ and note that there are four
  remaining  degrees of freedom in $\Q_3(T)$. Since $\nabla
  w_h(\bz_E)\cdot\bn_E=0$ for all $E\in\cE_h\cap T$, we see that $w_h$
  is constant in $T$, whence $\nabla w_h = 0$.
The equivalence of the identities $\nabla w_h=0$ and
 $\nabla_h w_h=0$ implies the asserted norm equivalence because
   $\Q_3(T)$ is finite dimensional. \\
(ii) We proceed as in (i). If $D^2 w_h=0$, then $\nabla w_h$ is
constant and so is $\nabla_h w_h$ according to its definition; thus 
$\nabla\nabla_h w_h=0$. Conversely, if $\nabla\nabla_h w_h=0$, then 
$\nabla_h w_h$ is constant in $T$ and thus $\nabla w_h$ is the same
constant at the vertices and midpoints of edges of $T$. This matches
the 16 degrees of freedom of $\Q_3(T)$, whence $\nabla w_h$ is constant
in $T$ and $D^2 w_h = 0$.
\\
(iii) 
{Estimate \eqref{approx} follows from the interpolation estimate \eqref{e:nodal_interp_2} with $\bpsi=\nabla w$ upon 
noting that $\hcI_h^2 [\nabla w]
= \nabla_h w$. } \\
(iv) The estimate \eqref{discrete-approx} is a consequence of
\eqref{approx}, an inverse inequality, and \eqref{hessian-equiv}. \\
\cpam{The independence of all constants of the element-size $h_T$
follows from scaling arguments.}
\end{proof}

\begin{remark}[bases of $\W_h$ and $\G_h$]\label{r:basis}
\rm 
We anticipate that our discrete algorithms 
(Algorithms \ref{alg:grad_flow_discr} and \ref{alg:fixed_point} below) 
do not require the choice of a particular basis for $\mathbb W_h$. 
Instead, we apply vertex based quadratures requiring only the values 
of the approximate deformation and its gradient at the vertices.
In contrast, a basis for $\G_h$ is required but standard. \cpam{In
our implementation we use the (nodal) Lagrange basis.} 
\end{remark}

\section{Discrete Energies and $\Gamma$-Convergence of the Discretization}\label{S:convergence} 

We employ the
Kirchhoff elements on quadrilaterals $\W_h^3\subset H^1(\o)^3$ and the discrete gradient
operator $\nabla_h:\W_h^3 \to \G_h^3$, whose components are denoted $\p_j^h$, $j=1,2$, to
approximate the energy $\tE$ given by \eqref{e:energy-used}.
For practical purposes, we also impose a relaxed isometry constraint
at the vertices of elements, \cpam{but we introduce a}
parameter $\delta \geq 0$ to control its violation.
We will show in Section~\ref{S:gradient-flow} that in the context of
\cpam{an $H^2$-gradient flow}, $\delta$ is 
proportional to the gradient flow pseudo-timestep and can therefore be
made arbitrary small. \cpam{We next give a discrete
version of \eqref{admin-set} and \eqref{admin_tangent}.}

\begin{definition}
For $\delta \geq 0$, $\by_{D,h} \in \W_h^3$ and $\Phi_{D,h} \in \G_h^3|_{\p_D \O}$ 
let the \emph{discrete admissible \cpam{set}} be
\[\begin{split}
\cA_h^\delta := \big\{  \by_h \in \W_h^3  : \, & \by_h|_{\partial_D \o} = \by_{D,h}|_{\p_D \o}, \ 
{\nabla_h \by_h|_{\partial_D \o}}  =\Phi_{D,h}|_{\p_D \o}, \\
& [\nabla \by_h(\bz)]^\transp \nabla \by_h(\bz) \ge I_2 \ \forall \bz \in \mathcal N_h, \
 \|[\nabla \by_h]^\transp \nabla \by_h - I_2  \|_{L^1_h(\o)} \le \delta \big\}.
\end{split}\]
The \emph{\cpam{(pseudo)} tangent space of $\cA_h^\delta$ at $\by_h\in
  \cA_h^\delta$} is defined by 
\[\begin{split}
\cF_h[\by_h]:=\big\{  \bw_h \in \W_h^3 : \,
 \bw_h|_{\partial_D \omega} = 0,   \ \nabla & \bw_h|_{\partial_D \o} =0, \\
& [\nabla \bw_h(\bz)]^T\nabla \by_h(\bz)+ [\nabla \by_h(\bz)]^T\nabla \bw_h(\bz) =0 \ 
 \forall \bz \in \mathcal N_h \big\}.
\end{split}\]
\end{definition}

\cpam{Notice that $\cF_h[\by_h]$ would be the tangent space to
$\cA_h^\delta$ at $\by_h$ if $[\nabla\by_h(\bz)]^T \nabla\by_h(\bz)$
were constant for vector fields in $\cA_h^\delta$ 
at every node $\bz\in\mathcal N_h$, which explains the terminology.
Notice also} the use of the discrete norms $\|\phi\|_{L^p_h(\o)}$, which for
$1\le p<\infty$ are defined by 
\[
\|\phi\|_{L^p_h(\o)}^p  :=  
\sum_{T\in \cT_h}  \frac{|T|}{4} \sum_{\bz \in \cN_h \cap T} \big|\phi|_T(\bz)\big|^p,
\]
and satisfy the equivalence relation $\|v_h\|_{L^p(\o)} \sim  \|v_h\|_{L^p_h(\o)}$
for piecewise bilinear functions $v_h \in C(\overline{\o})$. 
We also define the discrete inner product $(\cdot,\cdot)_h$ for 
piecewise continuous functions $\phi,\psi\in \Pi_{T\in\cT_h} C^0(T)$
\[
(\phi,\psi)_h := \sum_{T\in \mathcal T_h} \frac{|T|}{4} \sum_{\bz \in \cN_h \cap T} \phi|_T(\bz) \psi|_T(\bz)
\]
and note that $\|\phi \|_{L^p_h(\o)}^p= (|\phi|^p,1)_h$.

The finite element discretization $\tE_h$ of the energy functional $\tE$ 
in \eqref{e:energy-used} is given by 
\begin{equation}\label{e:discrete_energy}
\tE_h[\by_h] :=  \frac12 \int_\o |\nabla \nabla_h \by_h|^2  
+ \sum_{i,j=1}^2 \Big(\p_i\cI^1_h[\p_j^h \by_h] \cdot 
\Big[\frac{\p_1^h \by_h}{|\p_1^h \by_h|} \times \frac{\p_2^h \by_h}{|\p_2^h \by_h|}\Big], Z_{ij} \Big)_h 
+ \frac12 \cpam{(Z,Z)_h} - (\bef, \by_h)_h,
\end{equation}
for {$\by_h\in\mathcal{A}_h^\delta$} and $\tE_h[\by_h]=\infty$ otherwise,
where $\cI^1_h$ is the \cpam{canonical} Lagrange
interpolation operator into the continuous piecewise $\Q_1$ elements,
and both $Z$ and $\bef$ are piecewise continuous in $\bar\omega$.
\cpam{The latter enables the use of quadrature for the last three terms,
whereas the first term can be integrated exactly because $\nabla_h\by_h$ is
piecewise $\Q_2$. The energy \eqref{e:discrete_energy} is thus practical.}

\begin{remark}[discrete isometry relation]
\rm 
The nodal isometry relation 
$[\nabla\by_h(\bz)]^T\nabla\by_h(\bz) \ge I_2$ for $\by_h\in \cA_h$ implies that
$|\p_j^h \by_h(\bz)|\ge 1$ for $j=1,2$ and all $\bz\in\cN_h$. Hence, the normalization
$\p_j^h \by_h(\bz)/|\p_j^h \by_h(\bz)|$ in the discrete energy
functional $\tE_h[\by_h]$ is well-defined. We will see that it allows  
for suitable energy bounds and gives rise to a coercivity property.
\end{remark}

We start by showing that the family $\{\tE_h \}_{h\geq 0}$ is (equi-)coercive.

\begin{proposition}[coercivity]\label{p:coercivity}
Let the Dirichlet boundary data satisfy $\by_D \in H^3(\omega)^3$ and
$\Phi_D\in H^2(\omega)^{3\times 2}$, and let $\by_{D,h}:= \cI_h^3 \by_D$, $\Phi_{D,h}:= \cI_h^2 \Phi_D$.
{Let the data satisfy $Z \in \Pi_{T\in\mathcal{T}_h} C^0(T)^{2\times2}$
  and $\bef \in \Pi_{T\in\mathcal{T}_h} C^0(T)^3$.}
Let $\{\by_h\}_{h>0}$ be a sequence of displacements in $H^1(\o)^3$ such that 
for a constant $C$ independent of $h$ there holds
$$
\tE_h [\by_h] \leq C.
$$
Then $\by_h \in \cA_h^\delta$ and there exists a constant $\widetilde C$
depending on $\| Z \|_{L^\infty(\o)}$,
{$\|\bef\|_{L^\infty(\omega)}$,} $\|\by_D\|_{H^3(\omega)}$, and $\|\Phi_D \|_{H^2(\omega)}$, but independent of $h$, such that
\begin{equation}\label{apriori-H2}
\| \nabla \nabla_h \by_h \|_{L^2(\o)} \leq \widetilde C.
\end{equation}
\end{proposition}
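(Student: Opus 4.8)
The plan is to bound the first (bending) term $\|\nabla\nabla_h\by_h\|_{L^2(\o)}$ by absorbing the cross term and the force term into it, using the hypothesis $\tE_h[\by_h]\le C$ and the structural properties of $\cA_h^\delta$. First I would observe that the assumption $\tE_h[\by_h]<\infty$ forces $\by_h\in\cA_h^\delta$ by the very definition of $\tE_h$; this also gives $|\p_j^h\by_h(\bz)|\ge 1$ at every node, so the normalized vectors $\p_j^h\by_h/|\p_j^h\by_h|$ appearing in the energy are well-defined and bounded by $1$ in modulus at the nodes. Hence the cross term is controlled by $\|Z\|_{L^\infty(\o)}$ times $\sum_{i,j}\|\p_i\cI_h^1[\p_j^h\by_h]\|_{L^2_h(\o)}$, and this last quantity should be estimated by $\|\nabla\nabla_h\by_h\|_{L^2(\o)}$ up to a constant (using the equivalence of discrete and continuous $L^2$ norms on piecewise-$\Q_1$ functions, the fact that $\p_i\cI_h^1[\p_j^h\by_h]$ involves one derivative of a piecewise-$\Q_2$ discrete gradient component, and possibly an inverse estimate on edges against element Hessians). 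The force term $(\bef,\by_h)_h$ is controlled by $\|\bef\|_{L^\infty(\o)}\|\by_h\|_{L^1_h(\o)}$, which in turn is bounded via Poincaré–Friedrichs — using the clamped Dirichlet condition $\by_h|_{\partial_D\o}=\by_{D,h}$ — by $\|\nabla\by_h\|_{L^2(\o)}$ plus data, and then by $\|\nabla_h\by_h\|_{L^2(\o)}$ via \eqref{norm-equiv}, and finally by $\|\nabla\nabla_h\by_h\|_{L^2(\o)}$ plus the boundary contribution using a discrete Poincaré inequality for $\nabla_h\by_h-\Phi_{D,h}$ on the space with vanishing trace on $\partial_D\o$.

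Putting these together, from $\tE_h[\by_h]\le C$ we get
\[
\tfrac12\|\nabla\nabla_h\by_h\|_{L^2(\o)}^2 \le C - \tfrac12(Z,Z)_h + \big(\text{cross term}\big) + (\bef,\by_h)_h \le C' + c\,\|Z\|_{L^\infty(\o)}\,\|\nabla\nabla_h\by_h\|_{L^2(\o)} + c\,\|\bef\|_{L^\infty(\o)}\big(\|\nabla\nabla_h\by_h\|_{L^2(\o)} + D\big),
\]
where $D$ collects $\|\by_D\|_{H^3(\o)}$ and $\|\Phi_D\|_{H^2(\o)}$ (the latter entering through $\by_{D,h}=\cI_h^3\by_D$, $\Phi_{D,h}=\cI_h^2\Phi_D$ and the stability bound \eqref{stab-bound} together with \eqref{e:nodal_interp_3}, \eqref{e:nodal_interp_2}). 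This is a quadratic-versus-linear inequality of the form $\tfrac12 X^2 \le a + bX$ with $a,b$ depending only on the stated data, so Young's inequality $bX\le \tfrac14 X^2 + b^2$ yields $X^2 \le 4a + 4b^2 =: \widetilde C^2$, which is precisely \eqref{apriori-H2}.

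The main obstacle I expect is the careful handling of the cross (spontaneous-curvature) term: unlike the single-layer case, here one must bound a product of a first derivative of a discrete object, $\p_i\cI_h^1[\p_j^h\by_h]$, against a normalized cross product of discrete gradients, all evaluated through the discrete inner product $(\cdot,\cdot)_h$. The key points are (a) that the normalization keeps the cross-product factor pointwise bounded by $1$ at nodes precisely because $|\p_j^h\by_h(\bz)|\ge1$ — this is where the relaxed nodal isometry constraint in $\cA_h^\delta$ is essential and is the reason the authors declined to simplify $|\p_i\by|$ to $1$ in \eqref{e:energy-used} — and (b) that $\|\p_i\cI_h^1[\psi_h]\|_{L^2_h(\o)}\le c\,\|\nabla\psi_h\|_{L^2(\o)}$ for $\psi_h\in\G_h$, which follows from norm equivalence on the finite-dimensional reference element and scaling, after which \eqref{hessian-equiv} converts $\|\nabla\nabla_h\by_h\|$ back and forth with $\|D^2\by_h\|$ as needed. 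Everything else — the Poincaré/Friedrichs and inverse estimates, the interpolation bounds on the Dirichlet data, and the final Young's-inequality step — is routine once these two facts are in place.
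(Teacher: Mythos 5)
Your proposal is correct and follows essentially the same route as the paper's proof: deduce $\by_h\in\cA_h^\delta$ from finiteness of $\tE_h[\by_h]$, use the nodal bound $|\p_j^h\by_h(\bz)|\ge1$ to bound the normalized cross product, control $\|\nabla\cI_h^1[\nabla_h\by_h]\|$ by $\|\nabla\nabla_h\by_h\|$ (the paper's estimate \eqref{bound-Il2}, which you obtain by the equivalent scaling/inverse-estimate argument), bound the force term by applying the Poincar\'e inequality twice with the boundary data estimated via \eqref{e:nodal_interp_3} and \eqref{e:nodal_interp_2}, and absorb via Young's inequality. The only differences are cosmetic (e.g.\ working with the discrete $L^2_h$ norm and an unnecessary appeal to \eqref{stab-bound}), so there is nothing further to fix.
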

\begin{proof}
We first argue that $\by_h\in \cA_h^\delta$ since otherwise $\tE_h [\by_h] =+\infty$. 
As a consequence we have $|\p_j^h \by_h(\bz)|\ge 1$ for $j=1,2$ 
and all $\bz\in \cN_h$, whence there exists a constant $c$ independent of $h$ such that
\begin{equation}\label{coercivity}
\tE_h[\by_h] \ge \frac{1}{2} \|\nabla \nabla_h \by_h\|^2_{L^2(\o)}
- c  \big( \|\cpam{\nabla} \cI^1_h[ \nabla_h \by_h]\|_{L^2(\o)} \|Z\|_{L^\infty(\o)} + \| \by_h \|_{L^2(\o)} \|\bef\|_{L^\infty(\o)} \big).
\end{equation}
Since $\by_h = \by_{D,h}$ and $\nabla_h \by_h = \Phi_{D,h}$ on $\partial_D \omega$, we can apply the Poincar\'e inequality twice and bound $\| \by_h \|_{L^2(\omega)}$ in terms of $\| \nabla \nabla_h \by_h \|_{L^2(\omega)}$, $\| \by_{D,h} \|_{H^1(\omega)}$, and $\| \Phi_{D,h}\|_{H^1(\omega)}$.
In view of  \eqref{e:nodal_interp_3} and \eqref{e:nodal_interp_2}, the
latter two {quantities} are bounded by a constant times $\| \by_D \|_{H^3(\omega)}$ and $\| \Phi_D \|_{H^2(\omega)}$, respectively.
We observe that {for all $T\in\mathcal{T}_h$
\begin{equation}\label{bound-Il2}
\|\nabla\cI^1_h[\nabla_h \by_h]\|_{L^2(T)} \le c h_T 
\|\nabla\cI^1_h[\nabla_h \by_h]\|_{L^\infty(T)} \le c h_T
\|\nabla \nabla_h \by_h\|_{L^\infty(T)} \le c \|\nabla \nabla_h \by_h\|_{L^2(T)},
\end{equation}
where the last step is an inverse inequality for $\nabla_h\by_h\in\G_h$.
This implies the asserted bound.}
\end{proof}
\begin{remark}[coercivity and gradient flows]
\rm 
In the (energy decreasing) gradient flow  setting adopted in Section~\ref{S:gradient-flow}, the assumptions of Proposition~\ref{p:coercivity} are automatically satisfied provided the initial state has finite energy; see Proposition~\ref{prop:grad_flow}. 
\end{remark}

We now show $\Gamma$-convergence of $\tE_h$ to \cpam{$\tE$ in $H^1(\o)^3$}
and deduce the accumulation of almost \cpam{global} minimizers of $\tE_h$ at
global minimizers of the continuous problem. 
For this, we assume that the discrete boundary
conditions are obtained by interpolation of the continuous ones
with strong convergence in $L^2(\p_D\o)$. We also assume for
simplicity that $Z$ and $\bef$ are piecewise constant. 

\begin{theorem}[$\Gamma$-convergence]\label{T:gamma-convergence}
Let the Dirichlet boundary data satisfy $\by_D \in H^3(\omega)^3$ and $\Phi_D\in H^2(\omega)^{3\times 2}$, and let $\by_{D,h}:= \cI_h^3 \by_D$, $\Phi_{D,h}:=\cI_h^2 \Phi_D$.
If $Z$ and $\bef$ are piecewise constant over the partition
$\cT_h$, then the following two properties hold:

\smallskip
{\em (i) Attainment.} For all $\by \in \cpam{\cA}$, there exists a
sequence {$\{ \by_h \}_h$}
with $\by_h \in \cA_h^0 \subset \cA_h^\delta$ for all $h>0$ such that $\by_h \to \by$ in $H^1(\o)^3$ and  
\[
\limsup_{(h,\delta) \to 0} \tE_h[\by_h] \leq \cpam{\tE}[\by].
\]

\smallskip
{\em (ii) Lower bound property.} Assume that $\delta \to 0$ as $h \to 0$. 
For all $\by \in H^1(\o)^3$ and all sequences $\{ \by_h \} \subset H^1(\omega)^3$ 
such that $\by_h \to \by$ in $H^1(\o)^3$, we have
\[
\cpam{\tE}[\by] \le \liminf_{h\to 0} \tE_h[\by_h].
\]
\end{theorem}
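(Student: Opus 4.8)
The plan is to prove the two $\Gamma$-convergence properties separately, using the Kirchhoff interpolation operators and the discrete gradient estimates from the Proposition on properties of $\nabla_h$, together with the coercivity bound \eqref{apriori-H2}.

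\textbf{Attainment (i).} Given $\by \in \cA$, the natural recovery sequence is $\by_h := \hcI_h^3 \by$ applied componentwise, but $\by$ has only $H^2$ regularity so $\hcI_h^3$ need not be directly defined on it; the remedy is a standard density/mollification argument. First suppose $\by$ is smooth, say $\by \in H^3(\omega)^3$. Then $\by_h := \hcI_h^3 \by \to \by$ in $H^2(\omega)^3$ by \eqref{e:nodal_interp_3} and \eqref{stab-bound}, and by construction $\nabla \by_h(\bz) = \nabla \by(\bz)$ at every vertex $\bz \in \cN_h$, so $[\nabla\by_h(\bz)]^T\nabla\by_h(\bz) = [\nabla\by(\bz)]^T\nabla\by(\bz) = I_2$, which gives $\by_h \in \cA_h^0$ after checking the boundary conditions hold by the choices $\by_{D,h} = \cI_h^3\by_D$, $\Phi_{D,h} = \cI_h^2\Phi_D$. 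The convergence $\tE_h[\by_h] \to \tE[\by]$ then follows term by term: the bending term $\frac12\int_\omega|\nabla\nabla_h\by_h|^2$ converges because $\nabla\nabla_h\by_h \to D^2\by$ in $L^2$ by \eqref{approx} combined with \eqref{e:nodal_interp_3}; the $Z$-term converges because $\p_j^h\by_h \to \p_j\by$ uniformly (hence the normalized cross products converge, using $|\p_j\by|=1$ so no division by zero), $\p_i\cI_h^1[\p_j^h\by_h] \to \p_i\p_j\by$ in $L^2$, and quadrature error on piecewise $\Q_1$ data vanishes; the force term converges since $(\bef,\by_h)_h \to \int_\omega\bef\cdot\by$. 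For general $\by \in \cA$ one approximates by smooth isometries: this is where I would invoke the density of smooth isometries satisfying the boundary conditions in $\cA$ with respect to the $H^2$-norm, which is known (Hornung, Pakzad) for suitable domains, and then a diagonal argument over the mollification parameter and $h$ produces the required sequence with $\limsup_{(h,\delta)\to0}\tE_h[\by_h] \le \tE[\by]$.

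\textbf{Lower bound (ii).} Let $\by_h \to \by$ in $H^1(\omega)^3$. If $\liminf_h\tE_h[\by_h] = +\infty$ there is nothing to prove, so pass to a subsequence with $\tE_h[\by_h]$ bounded; then $\by_h \in \cA_h^\delta$ and the coercivity Proposition gives $\|\nabla\nabla_h\by_h\|_{L^2(\omega)} \le \widetilde C$. By \eqref{discrete-approx} and \eqref{hessian-equiv} (componentwise), $\|\nabla\by_h - \nabla_h\by_h\|_{L^2} \le ch\|\nabla\nabla_h\by_h\|_{L^2} \to 0$ and $\|D^2\by_h\|_{L^2(\Th)}$ (broken norm) is bounded, so $\by_h$ is bounded in the broken $H^2$-space and, up to a further subsequence, $\nabla_h\by_h \wto \nabla\by$ weakly in $H^1(\omega)^3$ (the weak limit of $\nabla_h\by_h$ must agree with $\nabla\by$ since $\nabla_h\by_h - \nabla\by_h \to 0$ strongly and $\nabla\by_h \to \nabla\by$). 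Consequently $\nabla\nabla_h\by_h \wto D^2\by$ weakly in $L^2(\omega)^{3\times2\times2}$, which forces $\by\in H^2(\omega)^3$. To see $\by\in\cA$: the nodal constraint $[\nabla\by_h(\bz)]^T\nabla\by_h(\bz)\ge I_2$ together with $\|[\nabla\by_h]^T\nabla\by_h - I_2\|_{L^1_h(\omega)}\le\delta\to0$ and the $L^\infty$-bound on $\nabla_h\by_h$ (from an inverse estimate applied to the $L^2$-bound, with a uniform $W^1_\infty$-control that must be extracted — see the obstacle below) yield $[\nabla\by]^T\nabla\by = I_2$ a.e. in the limit; the boundary conditions pass to the limit via the assumed $L^2(\p_D\omega)$-convergence of the discrete data. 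Finally, weak lower semicontinuity of $\bu\mapsto\frac12\int_\omega|D^2\bu|^2$ handles the leading term, while the $Z$-term and force term converge (not merely lsc) because $\p_j^h\by_h \to \p_j\by$ in $L^2$ and, after extracting a.e. convergence, the normalized nonlinearity is continuous and dominated, so $\tE[\by] \le \liminf_h\tE_h[\by_h]$.

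\textbf{Main obstacle.} The delicate point in both parts is handling the normalization $\p_j^h\by_h/|\p_j^h\by_h|$ in the $Z$-term, which requires a uniform lower bound $|\p_j^h\by_h|\ge1$ (available at nodes, hence on all of $\Th$ via the piecewise-polynomial structure and the nodal inequality, since the diagonal entries of $[\nabla\by_h(\bz)]^T\nabla\by_h(\bz)-I_2$ are nonnegative at every vertex) \emph{and} an $L^\infty$-bound on $\nabla_h\by_h$ uniform in $h$ so as to pass the nonlinearity to the limit and control the quadrature. The $H^2$-coercivity bound only gives $W^1_p$-control for finite $p$ via Sobolev embedding in 2D, not $W^1_\infty$; so the argument either restricts to passing to the limit in $L^p$ for all $p<\infty$ (sufficient for the cross-product terms once a.e.\ convergence is extracted and the normalized expression is bounded by $1$ in $L^\infty$ at nodes and by a fixed power of $\nabla_h\by_h$ elsewhere), or exploits that the normalized vectors have modulus exactly $1$ at nodes so the interpolated quantity $\cI_h^1[\ldots]$ is automatically bounded in $L^\infty$ independently of $h$. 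Making this chain of estimates airtight — in particular verifying that $\nabla\by_h-\nabla_h\by_h\to0$ in $L^2$ is strong enough, combined with the $L^1_h$-smallness of the isometry defect, to conclude $[\nabla\by]^T\nabla\by=I_2$ a.e.\ — is the technical heart of the proof.
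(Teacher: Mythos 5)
Your overall architecture coincides with the paper's: for attainment you use the density of smooth isometries in $H^2$ (Hornung), the nodal interpolant $\hcI_h^3\by$ (which preserves the nodal isometry, so $\by_h\in\cA_h^0$), term-by-term convergence of the energies and a diagonal argument; for the lower bound you use coercivity, identification of the weak limit of $\nabla_h\by_h$ with $\nabla\by$ via \eqref{discrete-approx}, the $L^1_h$ isometry-defect bound, and weak lower semicontinuity of the bending term. Part (i) is essentially the paper's proof.

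In part (ii), however, there is a genuine gap, and you name it yourself without closing it: the passage to the limit in the $Z$-term of \eqref{e:discrete_energy}. Two of the fixes you sketch do not work as stated. First, the nodal inequality $[\nabla\by_h(\bz)]^\transp\nabla\by_h(\bz)\ge I_2$ gives $|\p_j^h\by_h(\bz)|\ge1$ only at $\bz\in\cN_h$, not "on all of $\cT_h$" -- a $\Q_2$ field with unit-length nodal values can degenerate inside an element; fortunately only the nodal bound is needed, because the normalization appears solely inside the quadrature $(\cdot,\cdot)_h$. Second, there is no $h$-uniform $L^\infty$-bound on $\nabla_h\by_h$ obtainable "from an inverse estimate applied to the $L^2$-bound" (that costs $h^{-1}$ in 2D), and your dominated-convergence argument for the $Z$-term cannot work directly because the factor $\p_i\cI_h^1[\p_j^h\by_h]$ is a discrete second derivative which converges only \emph{weakly} in $L^2$ (via \eqref{bound-Il2} and \eqref{apriori-H2}), so the integrand is a product of a weakly and a strongly convergent factor, not a pointwise convergent dominated sequence. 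The paper closes exactly this gap by splitting the discrepancy into three pieces: (a) removing the normalization inside $(\cdot,\cdot)_h$ using $\||\p_j^h\by_h|-1\|_{L^p_h(\o)}\to0$ for $p=2,4$, obtained by interpolating between the $L^1_h$-bound \eqref{Lone-est} (of size $\delta$, where $|\p_j^h\by_h(\bz)|\ge1$ is used) and the $L^q_h$-bounds \eqref{Lp-est} from the Sobolev embedding; (b) bounding the quadrature-versus-integral error by $c\,h\,\|\nabla\nabla_h\by_h\|_{L^2(\o)}^2\|\nabla_h\by_h\|_{L^\infty(\o)}$ and then invoking the two-dimensional \emph{logarithmic} discrete Sobolev inequality $\|\nabla_h\by_h\|_{L^\infty(\o)}\le c|\log h|^{1/2}\|\nabla\nabla_h\by_h\|_{L^2(\o)}$, so this error is $O(h|\log h|^{1/2})\to0$ -- this is the substitute for the missing uniform $L^\infty$-control; (c) passing to the limit in the exact integral by pairing the weak $L^2$-convergence $\p_i\cI_h^1[\p_j^h\by_h]\wto\p_i\p_j\by$ against the strong $L^2$-convergence of $\p_1^h\by_h\times\p_2^h\by_h\to\p_1\by\times\p_2\by$, the latter via the compact embedding $H^1(\o)\hookrightarrow L^4(\o)$. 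Without ingredients (a)--(c), or an equivalent device, your lower-bound argument does not go through. (A minor additional remark: for the isometry of the limit you do not need any $L^\infty$-control either; the paper only uses $\|\Phi_h^\transp\Phi_h-\cI_h^1[\Phi_h^\transp\Phi_h]\|_{L^1(\o)}\le ch\|\nabla[\Phi_h^\transp\Phi_h]\|_{L^1(\o)}$ together with the defect bound $\delta$ and a.e.\ convergence along a subsequence.)
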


\begin{proof}
  We prove properties (i) and (ii) separately.

\smallskip\noindent
(i) \cpam{Since $\by\in\cA\subset H^2(\o)^3$},
for every $\epsilon>0$ the density of smooth isometries among isometries in $H^2(\o)^3$, cf.~\cite{Horn:11}, implies the existence of an isometry $\by_\epsilon \in H^3(\omega)^3$ such that
\begin{equation}\label{e:epsilon_approx}
\| \by - \by_\epsilon\|_{H^2(\o)} \leq \epsilon.
\end{equation}
This, in conjunction with the isometry property of
both $\by$ and $\by_\epsilon$, yields
\[
\big| \cpam{\tE} [\by] - \cpam{\tE} [\by_\epsilon] \big| \le C\epsilon.
\]
Therefore, we assume from now on that $\by\in H^3(\omega)^3$ and do
not write the subscript $\epsilon$ for simplicity. 

For $h>0$ let $\by_h = \hcI_h^3\by \in \W_h^3$ be the nodal interpolant of $\by$, i.e.,
we have $\by_h(\bz)=\by(\bz)$ and $\nabla \by_h (\bz)=\nabla \by(\bz)$ for
all $\bz \in \cN_h$. The latter yields $[\nabla \by_h]^\transp \nabla \by_h = I_2$
at the nodes in $\cN_h$, whence $\by_h \in \cA_h^0$. 
Convergence of $\by_h$ to $\by$ in $H^1(\omega)$ directly follows from the
interpolation estimate \eqref{e:nodal_interp_3}
$$
\| \by - \by_h \|_{H^1(\o)} \leq c_2  h^2 \| \by \|_{H^3(\o)}.
$$

It thus remains to prove the convergence
of the discrete energies $\tE_h[\by_h]$ to $\tE[\by]$. 
To derive the convergence of the first  term in \eqref{e:discrete_energy}, we write
\begin{equation}\label{grad-rel}
\nabla\by - \nabla_h\by_h =  \nabla \big(\by-\hcI_h^3\by\big)
+ \big(\nabla \hcI_h^3\by - \hcI_h^2[\nabla \hcI_h^3\by]\big)
\end{equation}
\cpam{and use \eqref{approx} in conjunction with \eqref{stab-bound} to get
\[
\|\nabla\big( \nabla\cI_h^3\by-\cI_h^2[\nabla\cI_h^3\by] \big)\|_{L^2(T)}
\le ch_T \|D^3\cI_h^3\by\|_{L^2(T)}
\le c h_T \|D^3\by\|_{L^2(T)}.
\]
Combining this with \eqref{e:nodal_interp_3} yields
}
\begin{equation}\label{strong_conv}
\|\nabla \nabla_h \by_h - D^2 \by \|_{L^2(\o)}
 \le c h \|D^3 \by \|_{L^2(\o)}.
\end{equation}
For the second term in $\tE_h[\by_h]$, we first note
that {$|\p_j^h \by_h(\bz)|=1$ for $j=1,2$ and all $\bz\in \cN_h$,} and by nodal
interpolation estimates
\begin{equation}\label{quad}
\begin{aligned}
\Big|\big( \p_i \cI^1_h[\p_j^h \by_h] \cdot \big[\p_1^h \by_h 
& \times \p_2^h \by_h \big], Z_{ij} \big)_h 
- \big(\p_i \cI^1_h[\p_j^h \by_h] \cdot \big[\p_1^h \by_h 
\times \p_2^h \by_h \big], Z_{ij} \big) \Big| \\
& \le c \sum_{T\in \cT_h} h_T^2  \big\|D \big( \p_i 
\cI^1_h[\p_j^h \by_h] ) \|_{L^2(T)}
\|D \big[\p_1^h \by_h \times \p_2^h \by_h \big] \|_{L^2(T)}\\
& + c \sum_{T\in \cT_h} h_T^2  \big\| \p_i 
\cI^1_h[\p_j^h \by_h] \|_{L^2(T)}
\|D^2 \big[\p_1^h \by_h \times \p_2^h \by_h \big] \|_{L^2(T)},
\end{aligned}
\end{equation}
because $D^2 \p_i \cI^1_h[\p_j^h \by_h]=0$ for every $T\in \cT_h$
and $Z$ is piecewise constant over $\cT_h$;
\cpam{recall that} $c$ denotes a generic constant independent of $h$.
Therefore, employing inverse estimates for both terms on the right-hand
side of the preceding estimate, {and recalling \eqref{bound-Il2},}
we deduce
\begin{align*}
\Big|\big( \p_i \cI^1_h[\p_j^h \by_h] &\cdot \big[\p_1^h \by_h 
\times \p_2^h \by_h \big], Z_{ij} \big)_h 
- \big(\p_i \cI^1_h[\p_j^h \by_h] \cdot \big[\p_1^h \by_h 
\times  \p_2^h  \by_h \big], Z_{ij} \big) \Big| \\
& \le c \sum_{T\in \cT_h} h_T  \big\| \nabla\nabla_h \by_h \|_{L^2(T)}^2
\|\nabla_h \by_h\|_{L^\infty(T)}
\le c h \big\| \nabla\nabla_h \by_h \|_{L^2(\omega)}^2
\|\nabla_h \by_h\|_{L^\infty(\omega)}.
\end{align*}
We further observe that $\|\nabla_h\by_h\|_{L^\infty(\o)}$ is bounded
  uniformly because $\by_h = \hcI_h^3\by$ \cpam{with $\by \in
  H^3(\o)^3\subset C^1(\overline\o)$ being an isometry, and \eqref{norm-equiv} with
  $p=\infty$.} 
 This, together with 
\eqref{strong_conv}, implies that the quadrature term above
is bounded by {$ch \cpam{\|\by\|_{H^3(\omega)}^2}$.
It thus remains to examine
\begin{equation*}
\big( \p_i \cI^1_h[\p_j^h \by_h] \cdot \big[\p_1^h \by_h 
  \times \p_2^h \by_h \big], Z_{ij} \big) -
\big( \p_i \p_j \by \cdot \big[\p_1 \by 
  \times \p_2 \by \big], Z_{ij} \big).
\end{equation*}
Invoking again~\eqref{grad-rel}, we infer that
$\|\nabla\by -\nabla_h\by_h \|_{L^2(\o)} \le ch^2 \| D^3\by \|_{L^2(\o)}$}
along with
\begin{align*}
\|\big[\p_1^h \by_h \times \p_2^h \by_h \big] 
&- \big[\p_1 \by\times \p_2 \by \big]\|_{L^2(\omega)} \\
& \le \|\big[\p_1^h \by_h - \p_1 \by \big] \times \p_2^h \by_h\|_{L^2(\omega)}
+ \|\big[\p_1 \by \times \big[\p_2^h \by_h - \p_2 \by \big]\|_{L^2(\omega)}
{\le ch^2 \| D^3\by \|_{L^2(\o)}}
\end{align*}
because $\|\nabla_h\by_h\|_{L^\infty(\omega)}, \|\nabla\by\|_{L^\infty(\omega)} \le C$.
\cpam{In addition, we see that
\begin{equation*}
  \nabla \cI_h^1[\nabla_h\by_h]-\nabla^2\by
  = \nabla\cI_h^1[\nabla_h\by_h - \nabla\by]
  + \nabla\big(\cI_h^1[\nabla\by] - \nabla\by  \big),
\end{equation*}
along with
\begin{equation*}
  \| \nabla\big(\cI_h^1[\nabla\by] - \nabla\by  \big) \|_{L^2(T)}
  \le c h_T \|D^3\by\|_{L^2(T)}.
\end{equation*}  
Using an inverse estimate and stability of $\cI_h^1$ in $L^\infty(T)$,
we get
\begin{equation*}
  \begin{aligned}
    \|\nabla\cI_h^1[\nabla_h\by_h - \nabla\by]\|_{L^2(T)}
    & \le c h_T^{-1} \|\cI_h^1[\nabla_h\by_h - \nabla\by]\|_{L^2(T)} \\
    & \le c \|\cI_h^1[\nabla_h\by_h - \nabla\by]\|_{L^\infty(T)}
    \le c \|\nabla_h\by_h - \nabla\by\|_{L^\infty(T)}.
  \end{aligned}
\end{equation*}  
Moreover, we further write
\begin{align*}
  \|\nabla_h\by_h - \nabla\by\|_{L^\infty(T)} &=
  \|\cI_h^2[\nabla\cI_h^3\by] - \nabla\by\|_{L^\infty(T)} \\
  &\le \|\cI_h^2[\nabla\cI_h^3\by] - \nabla\cI_h^3\by\|_{L^\infty(T)}
  + \|\nabla(\cI_h^3\by-\by)\|_{L^\infty(T)},
\end{align*}
and obtain, according to \eqref{e:nodal_interp_2} and \eqref{stab-bound} with
$p=\infty$ and an inverse estimate,
\begin{equation*}
  \|\cI_h^2[\nabla\cI_h^3\by] - \nabla\cI_h^3\by\|_{L^\infty(T)}
  \le ch_T \|D^3 \cI_h^3\by\|_{L^2(T)} \le c h_T \|D^3\by\|_{L^2(T)}.
\end{equation*}  
Since $\|\nabla(\cI_h^3\by-\by)\|_{L^\infty(T)} \le ch_T \|D^3\by\|_{L^2(T)}$,
we deduce $\|\nabla \cI_h^1[\nabla_h\by_h]-\nabla^2\by\|_{L^2(T)} \le
c h_T \|D^3\by\|_{L^2(T)}.$
This, together with the preceding bound for
$\|\big[\p_1^h \by_h \times \p_2^h \by_h \big] 
- \big[\p_1 \by\times \p_2 \by \big]\|_{L^2(\omega)}$,
implies
}
\begin{equation}\label{e:termIII}
\left| \big( \p_i \cI^1_h[\p_j^h \by_h] \cdot \big[\p_1^h \by_h \times \p_2^h \by_h \big], Z_{ij} \big) - \big( {\p_i \p_j \by} \cdot \big[\p_1 \by 
\times \p_2 \by \big], Z_{ij} \big) \right| \leq c h\|D^3 \by \|_{L^2(\o)}.
\end{equation}
Collecting the preceding estimates, we obtain
$\big| \tE_h[\by_h] - \cpam{\tE}[\by]\big| \leq ch\|D^3 \by \|_{L^2(\o)}$
where $\by$ is an abbreviation for $\by_\epsilon$. Selecting $h=h(\epsilon)$ to be
sufficiently small so that
$h\|D^3 \by_\epsilon \|_{L^2(\o)} \leq \epsilon$ yields
$$
\big| \tE_h[\by_h] - \cpam{\tE}[\by]\big| \leq c\epsilon.
$$
and the convergence of $\tE_h[\by_h]$ to $\cpam{\tE}[\by]$ when $h\to 0$ follows.

\smallskip
(ii) We may assume that $\by_h \in \cA_h^\delta$ and $\tE_h[\by_h] \leq C$ uniformly in $h$ (perhaps for a subsequence not relabeled) for otherwise $\liminf_{h\to 0} \tE_h[\by_h] =+\infty$ and there is nothing to prove.
Hence Proposition~\ref{p:coercivity} implies that the
sequence $\{\nabla_h \by_h\}_{h>0}$ is uniformly bounded in $H^1(\o)^{3\times 2}$. 
This guarantees the existence of $\Phi\in H^1(\o)^{3\times 2}$ 
such that after extraction of a subsequence (not relabeled) we have
{$\Phi_h=\nabla_h \by_h \rightharpoonup \Phi$ in \cpam{$H^1(\o)^{3\times 2}$} and
$\Phi_h \to \Phi$ in $\cpam{L^2(\o)^{3\times2}}$ as $h \to 0$.
The discrete \cpam{approximation} estimate \eqref{discrete-approx} yields}  
$$
\| \nabla \by - \Phi_h \|_{L^2(\o)} \leq \| \nabla \by - \nabla \by_h \|_{L^2(\o)} + \| \nabla \by_h - \nabla_h \by_h \|_{L^2(\o)} 
\leq  \| \nabla \by - \nabla \by_h \|_{L^2(\o)} + ch \| \nabla \nabla_h \by_h\|_{L^2(\o)},
$$
whence taking the limit when $h\to 0$ we deduce $\Phi=\nabla \by$ and $\by \in H^2(\o)^3$
because $\nabla\by_h\to\nabla\by$ in \cpam{$L^2(\omega)^{3\times 2}$
by assumption.}
Owing to the assumptions on the boundary data we have that
$\by|_{\p_D\o} = \by_\DD$ and $\nabla \by|_{\p_D\o} = \Phi_\DD$. To show that 
$\by$ is an isometry we {utilize discrete interpolation estimates
and $\by_h\in\cA_h^\delta$}
\begin{align*}
\|\Phi_h^\transp \Phi_h - \Id_2 \|_{L^1(\o)}
&\le  \|\Phi_h^\transp \Phi_h - \cI_h^1[\Phi_h^\transp \Phi_h] \|_{L^1(\o)}
+  \| \cI_h^1[\Phi_h^\transp \Phi_h]- \Id_2  \|_{L^1(\o)}
\\
&\le c h \| \nabla [\Phi_h^\transp \Phi_h] \|_{L^1(\o)} + c_0 \delta.
\end{align*}
The right-hand side converges to zero as $(h,\delta)\to 0$ because
  of the uniform bound {\eqref{apriori-H2}} of $\Phi_h$ in $H^1(\o)^{3\times 2}$. 
Hence, $\Phi_h^\transp \Phi_h \to \Id_2$ pointwise almost
everywhere in $\o$ for an appropriate subsequence and, 
since $\Phi_h^\transp \Phi_h \to \Phi^\transp \Phi$ pointwise almost
everywhere in $\o$, we deduce that $\by$ is an isometry a.e. in $\o$, i.e., $\by\in \cA$. 
Since the $H^1$-seminorm
is weakly lower semicontinuous we get 
{$\int_\omega |D^2 \by|^2 = \int_\o |\nabla\Phi|^2\le \liminf_{h\to0} \int_\omega
|\nabla\Phi_h|^2$.} It remains to prove that
the following three terms tend to $0$:
\begin{gather*}
I_h = \Big(\p_i\cI^1_h[\p_j^h \by_h] \cdot 
\Big[\frac{\p_1^h \by_h}{|\p_1^h \by_h|} 
\times \frac{\p_2^h \by_h}{|\p_2^h \by_h|}\Big], Z_{ij} \Big)_h
-
\Big(\p_i\cI^1_h[\p_j^h \by_h] \cdot 
\big[\p_1^h \by_h \times \p_2^h \by_h \big], Z_{ij} \Big)_h , 
\\
II_h = \Big(\p_i\cI^1_h[\p_j^h \by_h] \cdot 
\big[\p_1^h \by_h \times \p_2^h \by_h \big], Z_{ij} \Big)_h
-
\Big(\p_i\cI^1_h[\p_j^h \by_h] \cdot 
\big[\p_1^h \by_h \times \p_2^h \by_h \big], Z_{ij} \Big) ,
\\
III_h = \Big(\p_i\cI^1_h[\p_j^h \by_h] \cdot 
\big[\p_1^h \by_h \times \p_2^h \by_h \big], Z_{ij} \Big)
-
\Big(\p_i\p_j \by \cdot 
\big[\p_1 \by \times \p_2 \by \big], Z_{ij} \Big) ,
\end{gather*}
for all $1\le i,j \le 2$. We first note that
\[
|I_h| \le \big\| \p_i \cI^1_h[\p_j^h \by_h] \big\|_{L^2_h(\o)} \|Z_{ij}\|_{L^\infty(\omega)}
\Big\|\frac{\p_1^h \by_h}{|\p_1^h \by_h|} 
\times \frac{\p_2^h \by_h}{|\p_2^h \by_h|}- \p_1^h \by_h \times \p_2^h
\by_h\Big\|_{L^2_h(\o)}.
\]
The first factor on the right-hand side is bounded as $h\to 0$
according to {\eqref{bound-Il2} and \eqref{apriori-H2},} and the second one by assumption.
Since $|\p_i^h\by_h|\ge 1$, we estimate the last factor as follows:
\[\begin{split}
\Big\|\frac{\p_1^h \by_h}{|\p_1^h \by_h|} 
&\times \frac{\p_2^h \by_h}{|\p_2^h \by_h|}- \p_1^h \by_h \times \p_2^h \by_h\Big\|_{L^2_h(\o)} \\
&\le \Big\|\Big(\frac{\p_1^h \by_h}{|\p_1^h \by_h|} - \p_1^h \by_h\Big)
\times \frac{\p_2^h \by_h}{|\p_2^h \by_h|}\Big\|_{L^2_h(\o)} 
+ \Big\|\p_1^h \by_h
\times \Big( \frac{\p_2^h \by_h}{|\p_2^h \by_h|} - \p_2 \by_h \Big)\Big\|_{L^2_h(\o)} \\
&\le \big\||\p_1^h \by_h|-1 \big\|_{L^2_h(\o)} 
+ \big\|\p_1^h \by_h \big\|_{L^4_h(\o)} \big\||\p_2^h \by_h| -1 \big\|_{L^4_h(\o)}.
\end{split}\] 
By the approximate isometry property \cpam{and} {$\big|\p_j^h\by_h(\bz)\big|\ge1$
for all $\bz\in\mathcal{N}_h$ we obtain for $\by_h\in\cA_h^\delta$}
\begin{equation}\label{Lone-est}
\big\| |\p_j^h \by_h|-1\big\|_{L^1_h(\o)} 
\le \big\| |\p_j^h \by_h|^2 -1\big\|_{L^1_h(\o)} 
\le \delta.
\end{equation}
Moreover, since $\nabla_h\by_h$ is uniformly bounded in $H^1(\o)^3$ and
$\omega\subset\R^2$, we have by Sobolev embeddings for all $1\le q <\infty$
\begin{equation}\label{Lp-est}
\| \nabla_h \by_h \|_{L^q_h(\o)} \le c_q \| \nabla_h \by_h\|_{L^q(\o)}
\le c_q \big(\|\nabla_h \by_h \|_{L^2(\omega)} + \|\nabla\nabla_h\by_h\|_{L^2(\omega)}\big)
\le c_q, 
\end{equation}
\cpam{whence $\big\| |\p_j^h \by_h|-1\big\|_{L^q_h(\o)} \le c_q$.
Interpolating with discrete H\"older inequalities 
between this discrete $L^q_h$-estimate and the discrete
$L^1_h$-estimate in~\eqref{Lone-est}, we deduce that}
\[
\big\||\p_j^h \by_h| -1 \big\|_{L^p_h(\o)} \to 0, \qquad j=1,2
\]
for $p=2,4$ as \cpam{$\delta \to 0$.} This shows that $|I_h|\to0$.

The second term $II_h$ accounts for the effect of quadrature and
is \cpam{the same as} \eqref{quad}, whence 
\[
| II_h | \le c h \|\nabla\nabla_h\by_h\|_{L^2(\o)}^2 \|\nabla_h\by_h\|_{L^\infty(\o)}.
\]
Since $\by_h\in\cA_h^\delta$ is not an exact nodal isometry, we do not have direct
control of $\|\nabla_h\by_h\|_{L^\infty(\o)}$.
We invoke instead the \cpam{two-dimensional discrete Sobolev inequality
$\|\nabla_h\by_h\|_{L^\infty(\o)} \le c |\log h|^{1/2}
\|\nabla\nabla_h\by_h\|_{L^2(\o)}$~\cite[p.123]{BreSco08-book},} to infer that
\[
|II_h| \le c h |\log h|^{1/2} \|\nabla\nabla_h\by_h\|_{L^2(\o)}^3
\mathop{\longrightarrow}_{h\to0} 0.
\]
The last term $III_h$ is \cpam{the same as} \eqref{e:termIII} except that we
do not have $\by\in H^3(\omega)^3$. We split $III_h$ as follows:
\begin{align*}
III_h & = \Big(\big( \p_i \cI^1_h[\p_j^h\by_h]-\p_i\p_j\by \big)\cdot
\big[ \p_1\by\times\p_2\by \big], Z_{ij} \big)
\\
& + \Big( \p_i \cI^1_h[\p_j^h\by_h] \cdot \big\{ \big[\p_1^h\by_h\times\p_2^h\by_h]
- \big[ \p_1\by \times \p_2\by \big] \big\}, Z_{ij} \Big).
\end{align*}
We observe that $\p_i \cI_h^1[\p_j^h\by_h]\wto\p_i\p_j\by$ in
$L^2(\omega)^3$, whence the first term tends to $0$ as $h\to0$. In fact,
the uniform bound {\eqref{bound-Il2} on
$\nabla\cI_h^1[\nabla_h\by_h]$, in conjunction with
\eqref{apriori-H2},} implies the asserted weak \cpam{convergence,} and the limit is found via
\[
\big(\p_i\big(\cI^1_h[\p_j^h\by_h]-\p_j\by] \big),\psi\big)
= - \big(\cI^1_h[\p_j^h\by_h]-\p_j^h\by_h], \p_i\psi\big)
- \big(\p_j^h\by_h-\p_j\by , \p_i\psi \big) \mathop{\longrightarrow}_{h\to0} 0,
\]
which holds for every $\psi\in H^1_0(\omega)^3$
because
\[
\|\cI_h^1[\nabla_h\by_h] - \nabla_h\by_h\|_{L^2(\o)} \le c h 
\|\nabla\nabla_h\by_h\|_{L^2(\o)} \le c h 
\]
and $\nabla_h\by_h\to\nabla\by$ in $L^2(\o)$.
For the second term in $III_h$ we resort again to the uniform
$L^2$-bound on $\nabla \cI_h^1[\nabla_h\by_h]$ and write
\begin{align*}
\|\big[\p_1^h \by_h \times \p_2^h \by_h \big] 
- \big[\p_1 \by\times \p_2 \by \big]\|_{L^2(\omega)}
\le \|\nabla_h \by_h - \nabla \by \|_{L^4(\omega)}
\big( \|\nabla_h\by_h\|_{L^4(\omega)} + \|\nabla\by\|_{L^4(\omega)}\big).
\end{align*}
By compactness of the embedding $H^1(\o)\to L^4(\o)$,
we have  $\|\nabla_h \by_h - \nabla \by \|_{L^4(\omega)}\to0$ as
$h\to0$. Finally, using \eqref{Lp-est} for $q=4$ along with 
$\|\nabla\by\|_{L^\infty(\o)}\le c$ because $\by$ is an isometry, we
see that the preceding term tends to $0$ and thus conclude the proof.
\end{proof}

Theorem \ref{T:gamma-convergence} extends easily {to piecewise
constant approximations to $L^2$-data $Z$
and $\bef$ and to piecewise Lipschitz data over $\cT_h$;} we do
not carry out the details.
The following result is a consequence of standard abstract $\Gamma$-convergence theory
\cite{DalM93-book,Brai:14} {combined with Theorem \ref{T:gamma-convergence}
and Proposition~\ref{p:coercivity}.}
\begin{corollary}[{convergence of absolute minimizers}]\label{C:asb-min}
Let $\delta \to 0$ as $h\to 0$. Let $C>0$ be a constant independent of $h$
and $\{\by_h\}_h$ be a sequence of  almost absolute discrete minimizers
of $\tE_h$, namely
\begin{equation}\label{e:almost_absolute_min}
\tE_h[\by_h] \leq \inf_{\bw_h \in \cA_h^\delta} \tE_h[\bw_h] +
\epsilon_h \le C,
\end{equation}
where $\epsilon_h \to 0$ as $h\to 0$.
Then $\{\by_h\}_h$ is precompact in \cpam{$H^1(\o)^3$,} and every cluster 
point $\by$ of $\by_h$ is an absolute minimizer of \cpam{$\tE$}, namely
\begin{equation}\label{e:absolute_min_energy}
\cpam{\tE}[\by] = \inf_{{\bw \in \mathcal{A}}} \cpam{\tE}[\bw].
\end{equation}
Moreover, there exists a subsequence of $\{ \by_h \}_h$ (not relabeled) such that
\begin{equation}\label{e:absolute_min_displacement}
  \lim_{h\to 0} \| \by - \by_h \|_{H^1(\o)} = 0 \qquad \text{and}
  \qquad \lim_{h\to 0} \tE_h[\by_h] = \cpam{\tE}[\by].
\end{equation}
\end{corollary}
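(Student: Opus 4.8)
The plan is to deduce the statement from the fundamental theorem of $\Gamma$-convergence, whose three ingredients are all already available: the equi-coercivity of Proposition~\ref{p:coercivity}, the lower bound and attainment properties of Theorem~\ref{T:gamma-convergence}, and the almost-minimality hypothesis \eqref{e:almost_absolute_min}.

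First I would establish precompactness of $\{\by_h\}_h$ in $H^1(\o)^3$. Since $\tE_h[\by_h]\le C<\infty$, we have $\by_h\in\cA_h^\delta$, and Proposition~\ref{p:coercivity} yields the uniform bound $\|\nabla\nabla_h\by_h\|_{L^2(\o)}\le\widetilde C$; combined with the Dirichlet conditions and the Poincar\'e inequality (applied also to $\nabla_h\by_h$, which carries the boundary data $\Phi_{D,h}$ on $\partial_D\o$), this bounds $\|\by_h\|_{H^1(\o)}$ and $\|\nabla_h\by_h\|_{H^1(\o)}$ uniformly. Now $\nabla_h\by_h\in\G_h^3\subset H^1(\o)^{3\times 2}$ is genuinely $H^1$-bounded, hence precompact in $L^2(\o)^{3\times 2}$; passing to a subsequence, $\nabla_h\by_h\to\Phi$ in $L^2(\o)^{3\times 2}$ and $\nabla_h\by_h\rightharpoonup\Phi$ in $H^1(\o)^{3\times 2}$. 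The discrete estimate \eqref{discrete-approx} then forces $\nabla\by_h\to\Phi$ in $L^2(\o)^{3\times 2}$, and since $\by_h\to\by$ in $L^2(\o)^3$ by Rellich, we conclude $\Phi=\nabla\by$ and $\by_h\to\by$ strongly in $H^1(\o)^3$. This gives the claimed precompactness, so cluster points exist.

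Next I would fix a cluster point $\by$, attained along a subsequence $\by_{h'}\to\by$ in $H^1(\o)^3$. By the lower bound property, Theorem~\ref{T:gamma-convergence}(ii), $\tE[\by]\le\liminf_{h'\to 0}\tE_{h'}[\by_{h'}]\le C$, so in particular $\by\in\cA$. For an arbitrary competitor $\bw\in\cA$, the attainment property, Theorem~\ref{T:gamma-convergence}(i), provides a recovery sequence $\bw_h\in\cA_h^0\subset\cA_h^\delta$ with $\bw_h\to\bw$ in $H^1(\o)^3$ and $\limsup_{h\to 0}\tE_h[\bw_h]\le\tE[\bw]$. Since $\bw_{h'}$ is admissible in the infimum of \eqref{e:almost_absolute_min}, the almost-minimality $\tE_{h'}[\by_{h'}]\le\tE_{h'}[\bw_{h'}]+\epsilon_{h'}$ together with $\epsilon_{h'}\to 0$ gives
\[
\tE[\by]\le\liminf_{h'\to 0}\tE_{h'}[\by_{h'}]\le\limsup_{h'\to 0}\tE_{h'}[\by_{h'}]\le\tE[\bw].
\]
As $\bw\in\cA$ was arbitrary, $\tE[\by]=\inf_{\bw\in\cA}\tE[\bw]$, which is \eqref{e:absolute_min_energy}. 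Choosing $\bw=\by$ in the chain forces all inequalities to be equalities, whence $\lim_{h'\to 0}\tE_{h'}[\by_{h'}]=\tE[\by]$; this together with $\by_{h'}\to\by$ in $H^1(\o)^3$ establishes \eqref{e:absolute_min_displacement}.

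The only point requiring care — and it is mild, since it merely recycles part of the proof of Theorem~\ref{T:gamma-convergence}(ii) — is the upgrade from $L^2$- to $H^1$-precompactness of $\{\by_h\}$: one must exploit the genuine $H^1$-bound on the conforming field $\nabla_h\by_h$ together with \eqref{discrete-approx}, rather than hoping for compactness out of an $H^1$-bound on $\by_h$ alone. The rest is routine $\Gamma$-convergence bookkeeping.
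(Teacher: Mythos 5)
Your proposal is correct and follows essentially the same route as the paper: coercivity (Proposition~\ref{p:coercivity}) plus the norm equivalence, Poincar\'e inequality and the discrete-gradient estimate to get strong $H^1$-precompactness of $\by_h$, then the lower-bound property for $\tE[\by]\le\liminf_h\tE_h[\by_h]$ and recovery sequences combined with almost-minimality to conclude minimality and energy convergence. The only cosmetic difference is that you compare against an arbitrary competitor $\bw\in\cA$ directly, while the paper compares against an $\eta/2$-near minimizer $\bz$; the two are equivalent.
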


\begin{proof}
The uniform bound for the discrete energies and the coercivity property
of Proposition~\ref{p:coercivity} imply that the sequence
$\{\nabla_h\by_h\}_h$ is precompact in $L^2(\o)^{3\times2}$.
Due to the norm equivalence~\eqref{norm-equiv} and a Poincar\'e inequality
we have that $\{\by_h\}_h$ is bounded in $H^1(\o)^3$. Moreover, 
because of the estimate~\eqref{approx} the differences
$\nabla_h \by_h - \nabla \by_h$ converge strongly to zero in $L^2(\o)^{3\times 2}$ as $h\to 0$. 
Hence, there exists $\by\in H^1(\o)^3$ such that, up to the extraction of
a subsequence, we have
\[
\nabla_h \by_h, \, \nabla \by_h \to \nabla \by \quad \text{ in } L^2(\o).
\]
The lower bound assertion of Theorem~\ref{T:gamma-convergence} implies that
$\by\in \cA$ and 
\begin{equation}\label{est:liminf_min}
\cpam{\tE}[\by] \le \liminf_{h \to 0} \tE_h[\by_h].
\end{equation}
It remains to show that $\by$ is a global minimizer of \cpam{$\tE$}. To prove this,
let $\eta>0$ be arbitrary and $\bz\in \cA$ such that 
\[
\cpam{\tE}[\bz] \le \inf_{\bw\in \cA} \cpam{\tE}[\bw] + \eta/2.
\]
The attainment property stated in Theorem~\ref{T:gamma-convergence} implies
that there exist $h>0$ and $\bz_h \in \cA_h^\d$ so that 
\[
\tE_h[\bz_h] \le \cpam{\tE}[\bz] + \eta/2. 
\]
On combining the previous two estimates and incorporating the fact that
$\by_h$ is a minimizer for $\tE_h$ in $\cA_h^\d$ up to the value $\veps_h$, 
we have that 
\[
\tE_h[\by_h] \le \tE_h[\bz_h] + \veps_h \le \cpam{\tE}[\bz] + \eta/2 + \veps_h 
\le \inf_{\bw\in \cA} \cpam{\tE}[\bw] + \eta + \veps_h.
\]
This together with~\eqref{est:liminf_min} and the arbitrariness of 
$\eta>0$ prove \eqref{e:absolute_min_displacement}. 
\end{proof}

\begin{remark}[local minimizers]\label{r:local_min}
\rm Statements about almost local minimizers of $\tE_h$ are not
available in general. However, if $\tE$ has an isolated local
minimizer $\by$, then there exist local minimizers
$\{\by_h\}_h$ of $\tE_h$ converging to $\by$ provided $h$ is
sufficiently small \cite[Theorem 5.1]{Brai:14}. We defer the
discussion of almost local discrete minimizers of $\tE_h$ to Section~\ref{S:experiments}.
\end{remark}

\section{Fully Discrete Gradient Flow} \label{S:gradient-flow}

{
Corollary~\ref{C:asb-min} guarantees that every accumulation point of almost absolute minimizers of $\{\tE_h\}_h$ is an absolute minimizer of $E$. 
We {introduce} and study in this section a practical gradient flow algorithm to minimize $\tE_h$ on $\cA_h^\delta$ for $h>0$ and where $\delta$ is proportional to the gradient flow pseudo-time parameter.
Our fully discrete gradient flow {gives rise to} an energy decreasing 
iterative scheme that converges to stationary points {satisfying}
the isometry constraint up to a small error.
However, like every gradient descent method, whether the algorithm
reaches an almost absolute minimizer, a saddle point, or a local
minimizer is not {possible to discern. We discuss this further in
Section~\ref{S:experiments}.}
}

\begin{algorithm}[discrete $H^2$-gradient flow]\label{alg:grad_flow_discr}
Let $\tau >0$ and set $k=0$. Choose $\by_h^0 \in \cA_h^0$. \\
(1) Compute $\by_h^{k+1}\in \by_h^k + \cF_h\big[\by^k_h\big]$ which is minimal for 
the functionals
\[
\by_h \mapsto \frac{1}{2\tau} \|\nabla \nabla_h (\by_h - \by_h^k)\|_{L^2(\o)}^2
+ \tE_h[\by_h]
\]
in the set of all $\by_h \in \by_h^k + \cF_h\big[\by_h^k\big]$.\\
(2) increase $k\to k+1$ and continue with~(1).
\end{algorithm}

Every step of the gradient flow requires solving a {\it nonconvex} minimization
problem. Since the primary variables of interest are the discrete gradients
\[
\Phi_h := \nabla_h \by_h^{k+1}, \quad \tPhi_h:=\nabla_h \by_h^k, \quad
\Psi_h := \nabla_h \bw_h,
\]
with $\bw_h\in \cF_h\big[\by_h^k\big]$, we let their columns be
 $\Phi_{h,j}$, $\tPhi_{h,j}$, $\Psi_{h,j}$
for $j=1,2$, and write the corresponding Euler--Lagrange equations 
as follows:
\begin{equation}\label{fully-discrete-gradient}
\begin{aligned}
\frac{1}{\tau} & (\nabla [\Phi_h-\tPhi_h],\nabla \Psi_h) + (\nabla \Phi_h,\nabla \Psi_h) 
+\sum_{i,j=1}^2 \Big( \p_i \cI^1_h[\Psi_{h,j}] \cdot  
\Big[\frac{\Phi_{h,1}}{|\Phi_{h,1}|} \times \frac{\Phi_{h,2}}{|\Phi_{h,2}|}\Big], Z_{ij} \Big)_h \\
&+\sum_{i,j=1}^2 \Big(\p_i \cI^1_h[\Phi_{h,j}] \cdot
\Big[\big(P_{\Phi_{h,1}} \Psi_{h,1}\big) \times \frac{\Phi_{h,2}}{|\Phi_{h,2}|}+
\frac{\Phi_{h,1}}{|\Phi_{h,1}|} \times \big(P_{\Phi_{h,2}} \Psi_{h,2}\big)\Big], Z_{ij} \Big)_h
= (\bef,\bw_h)_h
\end{aligned}
\end{equation}
for all $\bw_h\in \cF_h\big[\by_h^k\big]$. Hereafter, to have a
simple and compact notation, we \cpam{let $P_\ba$ be the operator}
\[
P_\ba := \frac{1}{|\ba|} \Big(\Id_3 - \frac{\ba^\transp \ba }{|\ba|^2} \Big),
\]
\cpam{for any given $\ba\in\R^3$ and observe that $|P_\ba|\le 1$
provided $|\ba|\ge1$.}
Notice that we omit writing the time steps $k$ and $k+1$ in 
\eqref{fully-discrete-gradient}. Existence of a \cpam{locally} unique solution to
\eqref{fully-discrete-gradient} follows from \cpam{a local} contraction property of 
the fixed-point iteration defined in the next algorithm, in which we write
$\by_h^\ell$ for $\by_h^{k,\ell}$.

\begin{algorithm}[fixed-point iteration]\label{alg:fixed_point}
Let {$\tby_h \in \cA_h^{\infty}$}, define $\by_h^0=\tby_h$,
and set $\ell=0$. \\
(1) Compute $\Phi_h^{\ell+1}:= \nabla_h \by_h^{\ell+1}$ with 
$\by_h^{\ell+1} \in \tby_h + \cF_h\big[\tby_h\big]$ such that
\begin{equation}\label{iteration}
\begin{aligned}
\frac{1}{\tau} (\nabla [\Phi_h^{\ell+1}&-\tPhi_h],\nabla \Psi_h) 
+ (\nabla \Phi_h^{\ell+1},\nabla \Psi_h)  
= - \sum_{i,j=1}^2 \Big(  \p_i \cI^1_h[\Psi_{h,j}] \cdot  
\Big[\frac{\Phi_{h,1}^\ell}{|\Phi_{h,1}^\ell|} 
  \times \frac{\Phi_{h,2}^\ell}{|\Phi_{h,2}^\ell|}\Big],Z_{ij} \Big)_h \\
& - \sum_{i,j=1}^2 \Big( \cpam{\p_i\cI^1_h[\Phi_{h,j}^\ell]} \cdot
\Big[P_{\Phi_{h,1}^\ell} \Psi_{h,1} \times \frac{\Phi_{h,2}^\ell}{|\Phi_{h,2}^\ell|} +
\frac{\Phi_{h,1}^\ell}{|\Phi_{h,1}^\ell|} \times P_{\Phi_{h,2}^\ell} \Psi_{h,2}\Big],Z_{ij}\Big)_h
+ (\bef,\bw_h)_h 
\end{aligned}
\end{equation}
for all $\bw_h\in \cF_h[\tby_h]$ with $\Psi_h:=\nabla_h \bw_h$.\\
(2) increase $\ell \to \ell+1$ and continue with~(1).
\end{algorithm}

\cpam{Note that \eqref{iteration} is a linear system for $\Phi_h^{\ell+1}$.}
Under a moderate condition on the step size $\tau>0$ 
\cpam{the iterates are uniformly bounded and the map
$\Phi_h^{\ell+1}\mapsto\Phi_h^\ell$ is a
contraction in a suitable $H^1$-ball}. In particular, the 
limiting discrete Euler--Lagrange equations
\eqref{fully-discrete-gradient} then \cpam{admit a locally unique solution}. This, and
related properties, are discussed in the next two propositions.

\begin{proposition}[\cpam{local} contraction property]\label{prop:fp_conds}
\cpam{Let the mappings $\bef, Z$ be elementwise continuous, let
$\widetilde\Phi_h = \nabla_h \widetilde\by_h$, and set
$\widetilde{C} := \max\{1,\|\nabla \tPhi_h \|_{L^2(\o)}\}$. If
\begin{equation*}
  \mathcal{B}_h := \big\{ \by_h\in\mathcal{A}_h^\infty: \quad
  \|\nabla\nabla_h\by_h\|_{L^2(\omega)} \le 2 \widetilde{C}
  \big \} ,
\end{equation*}
then the nonlinear map $\Phi_h^{\ell} \mapsto \Phi_h^{\ell+1}$ in \eqref{iteration} is
well-defined from $\mathcal{B}_h$ into itself and is a contraction
with constant $\frac12$ provided that $\tau \le C_0$, with $C_0>0$
depending explicitly on 
$\widetilde{C}$, a Poincar\'e constant $c_P\ge1$ of $\o$,
$\|\bef\|_{L^\infty(\o)}$ and $\|Z\|_{L^\infty(\o)}$. Consequently, if
Algorithm~\ref{alg:fixed_point} is initialized with the $k$-th iterate of Algorithm
\ref{alg:grad_flow_discr}, i.e. $\widetilde\by_h = \by_h^k$, then the solution $\Phi_h^\ell$ of
Algorithm \ref{alg:fixed_point} converges to the unique solution of the 
Euler-Lagrange equation \eqref{fully-discrete-gradient} within
$\mathcal{B}_h$.
}
\end{proposition}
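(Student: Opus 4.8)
The plan is to realize the fixed-point iteration \eqref{iteration} as a genuine Banach contraction. I would work on the set $\mathcal{B}_h$ equipped with the distance $d(\by_h,\widehat\by_h):=\|\nabla\nabla_h(\by_h-\widehat\by_h)\|_{L^2(\o)}$; by \eqref{hessian-equiv} together with a Poincar\'e inequality (recall that $\cF_h[\widetilde\by_h]$-fields and their gradients vanish on $\partial_D\o$), $d$ is a genuine metric on the affine space $\widetilde\by_h+\cF_h[\widetilde\by_h]$, which contains every iterate $\by_h^\ell$, so $\mathcal{B}_h$ intersected with that affine space is a nonempty, closed and bounded, hence complete, metric space ($\widetilde\by_h$ itself lies in it since $\|\nabla\widetilde\Phi_h\|_{L^2}\le\widetilde C$). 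Three things must be checked: the map $T:\Phi_h^\ell\mapsto\Phi_h^{\ell+1}$ is well defined, it maps $\mathcal{B}_h$ into itself, and it is a $\tfrac12$-contraction once $\tau\le C_0$.

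First I would check \emph{well-definedness}: for fixed $\by_h^\ell\in\mathcal{B}_h$ the left-hand side of \eqref{iteration} equals $(\tfrac1\tau+1)$ times the $H^1$-seminorm bilinear form, which is coercive on $\nabla_h\cF_h[\widetilde\by_h]$, while the right-hand side is a bounded linear functional of $\bw_h\in\cF_h[\widetilde\by_h]$, because the nonlinear prefactors $\Phi_{h,j}^\ell/|\Phi_{h,j}^\ell|$ and $P_{\Phi_{h,j}^\ell}$ are bounded by $1$ at the nodes (as $|\Phi_{h,j}^\ell(\bz)|\ge1$ for $\by_h^\ell\in\cA_h^\infty$) and $\|\p_i\cI_h^1[\,\cdot\,]\|_{L^2}$ is controlled via \eqref{bound-Il2}; hence $\Phi_h^{\ell+1}=\nabla_h\by_h^{\ell+1}$ exists and is unique, and since $\bw_h:=\by_h^{\ell+1}-\widetilde\by_h\in\cF_h[\widetilde\by_h]$ the cross terms in $[\nabla\by_h^{\ell+1}(\bz)]^\transp\nabla\by_h^{\ell+1}(\bz)$ cancel nodewise, leaving $[\nabla\widetilde\by_h(\bz)]^\transp\nabla\widetilde\by_h(\bz)+[\nabla\bw_h(\bz)]^\transp\nabla\bw_h(\bz)\ge I_2$, so $\by_h^{\ell+1}\in\cA_h^\infty$. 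For \emph{invariance} I would test \eqref{iteration} with $\bw_h=\by_h^{\ell+1}-\widetilde\by_h$, i.e. $\Psi_h=\Phi_h^{\ell+1}-\widetilde\Phi_h$, bound the left-hand side below by $\tfrac1\tau\|\nabla\Psi_h\|_{L^2}^2+\tfrac12\|\nabla\Phi_h^{\ell+1}\|_{L^2}^2-\tfrac12\widetilde C^2$, and estimate the right-hand side — using $|P_\ba|\le1$, $|\ba/|\ba||=1$ at nodes, \eqref{bound-Il2}, the $\mathcal{B}_h$-bound $\|\nabla\cI_h^1[\Phi_{h,j}^\ell]\|_{L^2}\le c\widetilde C$, the discrete $L^p_h$–$L^2$ norm equivalences, and the Poincar\'e chain $\|\bw_h\|_{L^2}\le c\,c_P\|\nabla\Psi_h\|_{L^2}$ for the force term — by $cM\|\nabla\Psi_h\|_{L^2}$ with $M=\|Z\|_{L^\infty}(1+\widetilde C)+c_P\|\bef\|_{L^\infty}$. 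A Young absorption then yields $\|\nabla\Psi_h\|_{L^2}^2\le\tau\widetilde C^2+\tau^2c^2M^2$, so $\|\nabla\Psi_h\|_{L^2}\le\widetilde C$ and thus $\|\nabla\Phi_h^{\ell+1}\|_{L^2}\le2\widetilde C$ whenever $\tau\le\min\{\tfrac12,\widetilde C/(\sqrt2\,cM)\}$, i.e. $\by_h^{\ell+1}\in\mathcal{B}_h$.

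For the \emph{contraction} I would subtract the two copies of \eqref{iteration} for inputs $\by_h^\ell,\widehat\by_h^\ell\in\mathcal{B}_h$ (the force term cancels) and test with $\bw_h=\by_h^{\ell+1}-\widehat\by_h^{\ell+1}\in\cF_h[\widetilde\by_h]$, so that $\Psi_h=\Phi_h^{\ell+1}-\widehat\Phi_h^{\ell+1}$ and the left-hand side is at least $\tfrac1\tau\|\nabla(\Phi_h^{\ell+1}-\widehat\Phi_h^{\ell+1})\|_{L^2}^2$. Splitting each nonlinear term of the right-hand side difference into one factor carrying $\Phi_h^\ell-\widehat\Phi_h^\ell$ and bounded remaining factors — here I would use that $\ba\mapsto\ba/|\ba|$ and $\ba\mapsto P_\ba$ are Lipschitz on $\{|\ba|\ge1\}$ (derivatives of size $O(|\ba|^{-1})$ and $O(|\ba|^{-2})$ there), \eqref{bound-Il2}, the $\mathcal{B}_h$-bounds, discrete H\"older inequalities and the two-dimensional discrete Sobolev embedding $H^1\hookrightarrow L^4$, together with the Poincar\'e chain applied to the $\cF_h$-fields $\Psi_h$ and $\Phi_h^\ell-\widehat\Phi_h^\ell$ — I expect the bound $cM'\|\nabla(\Phi_h^\ell-\widehat\Phi_h^\ell)\|_{L^2}\|\nabla(\Phi_h^{\ell+1}-\widehat\Phi_h^{\ell+1})\|_{L^2}$ with $M'=\|Z\|_{L^\infty}(1+\widetilde C)(1+c_P)^2$, hence $d(\by_h^{\ell+1},\widehat\by_h^{\ell+1})\le\tau cM'\,d(\by_h^\ell,\widehat\by_h^\ell)$. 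Taking $C_0$ to be the minimum of $1/(2cM')$ and the invariance threshold — depending only on $\widetilde C$, $c_P$, $\|\bef\|_{L^\infty(\o)}$, $\|Z\|_{L^\infty(\o)}$ — Banach's theorem furnishes a unique fixed point $\Phi_h^\star\in\mathcal{B}_h$ with $\Phi_h^\ell\to\Phi_h^\star$; a fixed point is precisely a solution of \eqref{fully-discrete-gradient}, unique within $\mathcal{B}_h$. When $\widetilde\by_h=\by_h^k$ is the $k$-th iterate of Algorithm~\ref{alg:grad_flow_discr} we have $\widetilde\by_h\in\cA_h^\infty$ and $\|\nabla\widetilde\Phi_h\|_{L^2}\le\widetilde C$, so $\widetilde\by_h\in\mathcal{B}_h$ and the argument applies verbatim.

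The main obstacle I anticipate is not the fixed-point architecture but the quantitative bookkeeping: every product must be paired in the nodal inner product $(\cdot,\cdot)_h$ and transferred to continuous $L^p$-norms via the norm equivalences, \eqref{bound-Il2}, inverse estimates and discrete Sobolev inequalities, while making sure that all constants, and in particular the step-size threshold $C_0$, depend only on $\widetilde C$, $c_P$, $\|\bef\|_{L^\infty(\o)}$, $\|Z\|_{L^\infty(\o)}$ and not on $h$ — which is exactly what the nodal normalizations $\Phi_{h,j}/|\Phi_{h,j}|$ and the bound $|P_\ba|\le1$ are designed to make possible.
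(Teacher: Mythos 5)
Your proposal is correct and follows essentially the same route as the paper's proof: well-posedness of the linear step via Lax--Milgram together with nodal preservation of $[\nabla\by_h]^\transp\nabla\by_h\ge I_2$ from the linearized constraint, the uniform bound $\|\nabla\Phi_h^{\ell+1}\|_{L^2(\o)}\le 2\widetilde C$ obtained by testing with $\Phi_h^{\ell+1}-\tPhi_h$ and absorbing via Young and Poincar\'e under a step-size restriction, and the contraction estimate obtained by subtracting two instances of \eqref{iteration}, testing with the output difference, and using $|P_\ba|\le1$ together with the Lipschitz bounds for $\ba\mapsto\ba/|\ba|$ and $\ba\mapsto P_\ba$ on $\{|\ba|\ge1\}$. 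The only (harmless) difference is presentational: you package the argument as a Banach fixed point on the complete set $\mathcal{B}_h\cap(\tby_h+\cF_h[\tby_h])$ with arbitrary pairs of inputs, whereas the paper estimates consecutive iterates and then identifies fixed points with solutions of \eqref{fully-discrete-gradient} to conclude uniqueness.
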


\begin{proof} We proceed in three steps and simplify the
      notation upon writing $\|\cdot\|=\|\cdot\|_{L^2(\o)}$.

\smallskip
(i) {\it Existence and isometry relation}:
\cpam{
Given $\by_h^\ell\in\mathcal{B}_h$, and in particular $\by_h^\ell\in\mathcal{A}_h^\infty$,
we see that $[\Phi_h^\ell(\bz)]^T \Phi_h^\ell(\bz) \ge I_2$ for all $\bz\in\cN_h$.
We thus have} $|\p_j^h \by_h^\ell|\ge 1$ for $j=1,2$, so
that the right-hand side of \eqref{iteration}
is well-defined and the Lax--Milgram lemma gives the existence 
of a unique solution $\by_h^{\ell+1} \in \tby_h +  \cF_h\big[\tby_h\big]$.
Due to the definition of $\cF_h\big[\tby_h\big]$ we infer that
\[
\big[\Phi_h^{\ell+1}(\bz) -\tPhi_h(\bz)\big]^\transp  \tPhi_h (\bz) 
+ \tPhi_h(\bz)^\transp \big[\Phi_h^{\ell+1}(\bz) -\tPhi_h(\bz)\big] = 0
\]
for all $\bz\in \cN_h$. This implies $\by_h^{\ell+1}\in \cA_h^\infty$, namely
\[
[\Phi_h^{\ell+1}(\bz)]^\transp  \Phi_h^{\ell+1} (\bz) 
= [\tPhi_h(\bz)]^\transp  \tPhi_h (\bz) 
+ \big[\Phi_h^{\ell+1}(\bz) -\tPhi_h(\bz)\big]^\transp 
\big[\Phi_h^{\ell+1}(\bz) -\tPhi_h(\bz)\big]
\ge I_2
\]
for all $\bz\in \cN_h$, because $\tby_h\in\cA_h^\infty$.

\smallskip
(ii) {\it Uniform bound}: 
We next show that the iterates
\cpam{satisfy $\|\nabla\Phi_h^\ell\|\le2\widetilde{C}$.} For
this, we choose 
\[
\Psi_h = \Phi_h^{\ell+1}-\tPhi_h =\nabla_h \big[\by_h^{\ell+1}-\tby_h\big]
\]
in \eqref{iteration}. This leads to
\begin{align*}
 \frac{1}{\tau} \|\nabla (\Phi_h^{\ell+1} &-\tPhi_h)\|^2 
+ \frac12 \|\nabla (\Phi_h^{\ell+1}-\tPhi_h)\|^2 
+ \frac12 \|\nabla \Phi_h^{\ell+1}\|^2 - \frac12 \|\nabla \tPhi_h\|^2  \\
& = (\bef,\by_h^{\ell+1}-\tby_h)_h
- \sum_{i,j=1}^2 \Big( \p_i \cI^1_h[\Phi_{h,j}^{\ell+1}-\tPhi_{h,j}] \cdot  
\Big[\frac{\Phi_{h,1}^\ell}{|\Phi_{h,1}^\ell|} 
  \times \frac{\Phi_{h,2}^\ell}{|\Phi_{h,2}^\ell|}\Big],Z_{ij}\Big)_h \\
& - \sum_{i,j=1}^2 \Big(\p_i \cI^1_h[\Phi_{h,j}^\ell] \cdot
\Big[P_{\Phi_{h,1}^\ell}(\Phi_{h,1}^{\ell+1}-\tPhi_{h,1}) \times \frac{\Phi_{h,2}^\ell}{|\Phi_{h,2}^\ell|} +
\frac{\Phi_{h,1}^\ell}{|\Phi_{h,1}^\ell|} \times P_{\Phi_{h,2}^\ell}(\Phi_{h,2}^{\ell+1}-\tPhi_{h,2})\Big] ,Z_{ij}\Big)_h.
\end{align*}
Since \cpam{$|P_{\Phi^\ell_h(\bz)} |\le 1$} for all $\bz\in\cN_h$, we
\cpam{deduce
\begin{equation*}
\frac{1}{\tau} \|\nabla (\Phi_h^{\ell+1} - \tPhi_h)\|^2 \le 
\frac12 \|\nabla \tPhi_h\|^2 
+  c(f,Z) \big(\|\by_h^{\ell+1}-\widetilde\by_h\| 
+ \|\nabla (\Phi_h^{\ell+1}-\tPhi_h)\|  + \|\nabla \Phi_h^\ell\|  
\| \Phi_h^{\ell+1}-\tPhi_h \| \big),
\end{equation*}
with $c(f,Z)>0$ only depending on $\bef$ and $Z$.
We now apply the Poincar\'e inequality to $\|\by_h^{\ell+1}-\tilde\by_h\|$ in
conjunction with \eqref{norm-equiv}, namely 
$\|\by_h^{\ell+1}-\tilde\by_h\| \le c_P \|\Phi_h^{\ell+1} - \tPhi_h\|$,
where we let $c_P$ be the product of the Poincar\'e constant of $\o$
and the constant $c_1\ge1$ in \eqref{norm-equiv} and take it to be $c_P\ge1$.
Applying the Poincar\'e inequality again, this time to
$\Phi_h^{\ell+1} - \tPhi_h$, we obtain
\begin{equation*}
\frac{1}{\tau} \|\nabla (\Phi_h^{\ell+1} - \tPhi_h)\|^2 
\le \frac12 \|\nabla \tPhi_h\|^2 
+ c(f,Z) c_P^2 \big(2 + \|\nabla \Phi_h^\ell\|  \big) 
\|\nabla (\Phi_h^{\ell+1}-\tPhi_h)\|.
\end{equation*}
To prove that $\|\nabla\Phi_h^{\ell+1}\|\le2\widetilde{C}$ we assume
by induction that $\|\nabla\Phi_h^\ell\|\le2\widetilde{C}$, which is
valid for $\ell=0$.
Using $\|\nabla \tPhi_h\|\le \widetilde{C}$ yields
\begin{equation*}
\|\nabla (\Phi_h^{\ell+1}-\tPhi_h)\|^2 
\le \tau \widetilde{C}^2 + 4\tau^2 c(f,Z)^2 c_P^4 \big(1 + \widetilde{C}\big)^2.
\end{equation*}
Since $\widetilde{C}\ge1$, choosing
\[
\tau \le C_1 := \min\Big\{\frac12,\frac{1}{16 c(f,Z)^2 c_P^4} \Big\}
  \le \frac{1}{4c(f,Z)^2 c_P^4} \frac{\widetilde C^2}{(1+\widetilde C)^2}
\]
implies $\|\nabla (\Phi_h^{\ell+1}-\tPhi_h)\|^2
\le 2\tau\widetilde{C}^2\le \widetilde{C}^2$,
whence $\|\nabla\Phi_h^{\ell+1}\|\le2\widetilde{C}$ as asserted.
}

\smallskip
(iii) {\it Contraction property}:
It remains to show that \cpam{the map $\Phi_h^\ell\mapsto\Phi_h^{\ell+1}$
given in \eqref{iteration} is a contraction on $\mathcal{B}_h$.} 
For this, we subtract the equations that define $\Phi_h^{\ell+1}$ and
$\Phi_h^\ell$ in \eqref{iteration} and verify that 
\[\begin{split}
\frac{1}{\tau}   (\nabla (\Phi_h^{\ell+1}-\Phi_h^\ell),\nabla \Psi_h) 
& + (\nabla (\Phi_h^{\ell+1}-\Phi_h^\ell),\nabla \Psi_h) \\
&= - \sum_{i,j=1}^2 \Big(  \p_i \cI^1_h[\Psi_{h,j}] \cdot  
\Big[\frac{\Phi_{h,1}^\ell}{|\Phi_{h,1}^\ell|} 
  \times \frac{\Phi_{h,2}^\ell}{|\Phi_{h,2}^\ell|}\Big], Z_{ij} \Big)_h \\
& - \sum_{i,j=1}^2 \Big(\p_i \cI^1_h[\Phi_{h,j}^\ell] \cdot
\Big[P_{\Phi_{h,1}^\ell} \Psi_{h,1} \times \frac{\Phi_{h,2}^\ell}{|\Phi_{h,2}^\ell|} +
\frac{\Phi_{h,1}^\ell}{|\Phi_{h,1}^\ell|} \times P_{\Phi_{h,2}^\ell} \Psi_{h,2}\Big], Z_{ij}\Big)_h \\
& + \sum_{i,j=1}^2 \Big( \p_i \cI^1_h[\Psi_{h,j}] \cdot  
\Big[\frac{\Phi_{h,1}^{\ell-1}}{|\Phi_{h,1}^{\ell-1}|} 
  \times \frac{\Phi_{h,2}^{\ell-1}}{|\Phi_{h,2}^{\ell-1}|}\Big], Z_{ij}\Big)_h \\
& + \sum_{i,j=1}^2 \Big( \p_i \cI^1_h[\Phi_{h,j}^{\ell-1}] \cdot
\Big[P_{\Phi_{h,1}^{\ell-1}} \Psi_{h,1} \times \frac{\Phi_{h,2}^{\ell-1}}{|\Phi_{h,2}^{\ell-1}|} +
\frac{\Phi_{h,1}^{\ell-1}}{|\Phi_{h,1}^{\ell-1}|} \times
P_{\Phi_{h,2}^{\ell-1}} \Psi_{h,2}\Big], Z_{ij}\Big)_h .
\end{split}\]
\cpam{Bounding separately the sum of the first and third terms and
that of the second and
fourth terms, and using} the admissible  
choice $\Psi_h = \Phi^{\ell+1}_h-\Phi^\ell_h$, we deduce 
the estimate 
\[
\frac{1}{\tau} \|\nabla (\Phi_h^{\ell+1}-\Phi_h^\ell)\|^2
\le \frac12 C_2^{-1} \|\nabla (\Phi_h^{\ell+1}-\Phi_h^\ell)\| 
\|\nabla (\Phi_h^{\ell-1}-\Phi_h^\ell)\|,
\]
where we \cpam{have used $\|\nabla\Phi_h^\ell\| \le 2 \widetilde{C}$
shown in step (ii),}
that $|\Phi_{h,j}^\ell(\bz)|\ge 1$ for all $\ell\ge 0$, $j=1,2$, and $z\in \cN_h$, 
and that for $\ba,\bb\in \R^3$ with $|\ba|,|\bb|\ge 1$ the following estimates
hold
\[
\Big|\frac{\ba}{|\ba|} - \frac{\bb}{|\bb|}\Big| \le |\ba-\bb|, \quad 
\big|P_\ba - P_\bb\big| = 3 |\ba-\bb|.
\]
This implies the 
asserted contraction property \cpam{with constant $\frac12$ in
$\mathcal{B}_h$} for $\tau \le C_0:=\min\{C_1,C_2\}$.
\cpam{Finally, if $\widetilde\by_h = \by_h^k$
is the $k$-th iterate of Algorithm \ref{alg:grad_flow_discr}, then any
fixed point of \eqref{iteration} is a solution of
\eqref{fully-discrete-gradient} and conversely. This implies
uniqueness of \eqref{fully-discrete-gradient} within $\mathcal{B}_h$.
}
\end{proof}

\begin{remark}[time step]\label{R:timestep}
\rm
If $\bef=0$, then the preceding proof shows that $\tau$ has to be 
sufficiently small \cpam{so that 
$
\tau \le \big(4 c(Z)^2 c_P^2\big)^{-1},
$
where $c(Z) = \|Z\|_{L^\infty(\omega)}$} and $c_P\ge1$ is the Poincar\'e
constant of $\omega$ with vanishing Dirichlet condition on $\p_D\o$.
\cpam{Due to a repeated application of the Poincar\'e inequality,
the case $\bef\neq 0$ requires a stringent condition on $\tau$.}
\end{remark}

The following proposition shows that the discrete $H^2$ 
gradient flow of Algorithm \ref{alg:grad_flow_discr}
is energy decreasing and becomes stationary,
its iterates are uniformly 
bounded, and the violation of the isometry constraint is
controlled by the pseudo-timestep size.

\begin{proposition}[properties of iterates]\label{prop:grad_flow}
Let $Z$ and $\bef$ be piecewise continuous over the partition
  $\cT_h$. Let
$\{\by_h^k\}_{k=0}^\infty \subset  {\cA_h^\infty}$ be iterates of
Algorithm~\ref{alg:grad_flow_discr}. We then have that for all $k\ge 0$
\begin{equation}\label{energy-decrease}
\tE_h[\by_h^{k+1}] 
+ \frac{1}{2\tau}\sum_{\ell=0}^k  \|\nabla\nabla_h (\by_h^{\ell+1}-\by_h^\ell)\|^2
\le \tE_h[\by_h^0],
\end{equation}
and in particular
\begin{equation}\label{H2-bound}
\|\nabla \nabla_h \by_h^k\|\le \widetilde{C}
\end{equation}
for \cpam{a constant $\widetilde{C}>0$ depending on $\by_h^0, f$ and
$Z$, but independent of $k$.}  In addition, 
if $\big[\nabla_h \by_h^0(\bz)\big]^\transp \big[\nabla_h \by_h^0(\bz)\big] = I_2$ for
all $\bz\in \cN_h$ then $\by_h^{k+1} \in {\cA_h^{c_0\tau}}$, i.e.
\begin{equation}\label{isometry-defect}
\big\|[\nabla_h \by_h^{k+1}]^\transp [\nabla_h \by_h^{k+1}]- \Id_2 \big\|_{L^1_h(\o)}
\le  c_0 \tau \qquad\forall k\ge0,
\end{equation}
where $c_0$ depends \cpam{only on $\by_h^0$.} 
\end{proposition}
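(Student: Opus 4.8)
The plan is to run the standard minimizing-movements energy argument, the one structurally new ingredient being an exact nodal identity for the isometry defect. For \emph{Step~1 (energy decrease)} I would use that $\by_h^k$ itself lies in the affine space $\by_h^k+\cF_h[\by_h^k]$ over which $\by_h^{k+1}$ is minimal; comparing the minimizer with $\by_h^k$ gives $\frac1{2\tau}\|\nabla\nabla_h(\by_h^{k+1}-\by_h^k)\|_{L^2(\o)}^2+\tE_h[\by_h^{k+1}]\le\tE_h[\by_h^k]$ for each $k$, the energy being finite along the iterates because $\by_h^k\in\cA_h^\infty$ forces $|\p_j^h\by_h^k(\bz)|\ge1$, so the normalizations in \eqref{e:discrete_energy} are well defined. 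Summing over $\ell=0,\dots,k$ and telescoping then yields \eqref{energy-decrease}. Existence of each minimizer I would justify, as in the proof of Proposition~\ref{p:coercivity}, by continuity and coercivity on the finite-dimensional affine space $\by_h^k+\cF_h[\by_h^k]$, using that every element of this space inherits $[\nabla\by_h(\bz)]^\transp\nabla\by_h(\bz)\ge I_2$ from $\by_h^k$ (the pure-square correction in the expansion below) and the at most linear growth of the spontaneous-curvature term in $\|\nabla\nabla_h\cdot\|$ thanks to the normalizations.

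For \emph{Step~2 (uniform $H^2$-bound)}, from \eqref{energy-decrease} I have $\tE_h[\by_h^{k+1}]\le\tE_h[\by_h^0]$ for all $k$. Since each increment $\by_h^{\ell+1}-\by_h^\ell\in\cF_h[\by_h^\ell]$ vanishes together with its gradient on $\p_D\o$, telescoping shows that $\by_h^{k+1}$ carries the same discrete boundary data on $\p_D\o$ as $\by_h^0$. Hence the equi-coercivity estimate \eqref{coercivity}, combined with \eqref{bound-Il2} and two Poincar\'e inequalities with $\by_h^0$'s boundary data, gives $\tfrac14\|\nabla\nabla_h\by_h^{k+1}\|_{L^2(\o)}^2\le\tE_h[\by_h^{k+1}]+C\le\tE_h[\by_h^0]+C$, which is \eqref{H2-bound} with $\widetilde C$ depending only on $\by_h^0$, $\|Z\|_{L^\infty(\o)}$ and $\|\bef\|_{L^\infty(\o)}$ (the case $k=0$ being trivial). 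As a by-product this shows $\tE_h[\by_h^{k+1}]\ge-C$ uniformly in $k$, a fact I will need in Step~3.

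For \emph{Step~3 (isometry defect)}, set $\bw_h^\ell:=\by_h^{\ell+1}-\by_h^\ell\in\cF_h[\by_h^\ell]$. At every node $\bz$ the linearized constraint defining $\cF_h[\by_h^\ell]$ kills the cross terms when expanding the quadratic form, so $[\nabla\by_h^{\ell+1}(\bz)]^\transp\nabla\by_h^{\ell+1}(\bz)=[\nabla\by_h^\ell(\bz)]^\transp\nabla\by_h^\ell(\bz)+[\nabla\bw_h^\ell(\bz)]^\transp\nabla\bw_h^\ell(\bz)$. Iterating from $[\nabla\by_h^0(\bz)]^\transp\nabla\by_h^0(\bz)=I_2$ and using that $\nabla\by_h(\bz)=\nabla_h\by_h(\bz)$ at nodes, I obtain the exact identity $[\nabla_h\by_h^{k+1}(\bz)]^\transp\nabla_h\by_h^{k+1}(\bz)-I_2=\sum_{\ell=0}^k[\nabla_h\bw_h^\ell(\bz)]^\transp\nabla_h\bw_h^\ell(\bz)\ge0$, which already gives the nodal positivity part of $\by_h^{k+1}\in\cA_h^{c_0\tau}$. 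Taking Frobenius norms nodewise ($|A^\transp A|\le|A|^2$), summing against the quadrature weights, then using $\|v_h\|_{L^2_h(\o)}\le c\|v_h\|_{L^2(\o)}$ for piecewise $\Q_2$ fields (an inverse estimate), a Poincar\'e inequality for the $\p_D\o$-vanishing fields $\nabla_h\bw_h^\ell$, and finally \eqref{energy-decrease}, I get $\|[\nabla_h\by_h^{k+1}]^\transp\nabla_h\by_h^{k+1}-I_2\|_{L^1_h(\o)}\le c\,c_P^2\sum_{\ell=0}^k\|\nabla\nabla_h\bw_h^\ell\|_{L^2(\o)}^2\le 2\tau\,c\,c_P^2\,(\tE_h[\by_h^0]-\tE_h[\by_h^{k+1}])\le c_0\tau$, with $c_0$ independent of $k$ and $\tau$.

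I do not expect any single step to be hard; the scheme is a routine minimizing-movements estimate. The point requiring care is that the constant $c_0$ in \eqref{isometry-defect} must be independent of $k$: this forces me to feed the equi-coercivity lower bound of Step~2 back into the telescoped energy identity, and to track the mesh-dependent norm equivalences ($L^2_h$ versus $L^2$ on $\Q_2$ fields, and the Poincar\'e constants on the $\p_D\o$-vanishing spaces) so that all constants ultimately depend only on $\by_h^0$ and the fixed data $Z$, $\bef$.
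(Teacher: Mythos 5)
Your proposal is correct and follows essentially the same three-step route as the paper: energy decrease by comparing the minimizer with the previous iterate, the uniform $H^2$-bound via the coercivity estimate \eqref{coercivity} together with \eqref{bound-Il2} and Poincar\'e inequalities, and the isometry defect via the exact nodal quadratic identity coming from the linearized constraint, telescoped and controlled through Poincar\'e and \eqref{energy-decrease}. The extra details you supply (existence of the minimizers, preservation of the boundary data along the flow, the uniform lower energy bound, and the $L^2_h$-versus-$L^2$ inverse estimate for $\Q_2$ fields) are points the paper leaves implicit, and they are handled correctly.
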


\begin{proof} The proof splits into three steps.

\smallskip
(i) {\it Energy decay}: This is a direct consequence of the minimizing
properties of the iterates, i.e., 
\[
\tE_h[\by_h^{k+1}] + \frac{1}{2\tau} \|\nabla\nabla_h(\by_h^{k+1}-\by_h^k)\|^2_{L^2(\o)} 
\le\tE_h[\by_h^k].
\]

(ii) {\it Coercivity}:
For every \cpam{$\by_h\in \cA_h^\infty$} we have $|\p_j^h \by_h(\bz)|\ge 1$ for $j=1,2$ 
and all $\bz\in \cN_h$. \cpam{This, in conjunction 
with~\eqref{coercivity} and \eqref{bound-Il2}, yields}
\[
\tE_h[\by_h] \ge \frac{1}{4} \|\nabla \nabla_h \by_h\|^2_{L^2(\o)}
- c \big(\|Z\|^2_{L^\infty(\o)} + \|\bef\|^2_{L^\infty(\o)} \big).
\]
Since $\tE_h[\by_h^k]\le\tE_h[\by_h^0]$ \cpam{from (i)},
this implies the asserted bound \eqref{H2-bound} of the iterates.

\smallskip
(iii) {\it Isometry violation}:
Abbreviating $\Phi_h^{k+1}:=\nabla_h \by_h^{k+1}$ and $\Phi_h^k := \nabla_h \by_h^k$,
and noting that $\by_h^{k+1}-\by_h^k\in \cF[\by_h^k]$, we have
\cpam{for all $\bz\in \cN_h$}
\[
\big[\Phi_h^{k+1}(z)-\Phi_h^k(z)\big]^\transp \Phi_h^k(z) 
+ \big[\Phi_h^k(z)\big]^\transp \big[\Phi_h^{k+1}(z)-\Phi_h^k(z)\big] = 0,
\]
whence 
\[
[\Phi_h^{k+1}(z)]^\transp [\Phi_h^{k+1}(z)] 
= [\Phi_h^k(z)]^\transp [\Phi_h^k(z)] 
+ [(\Phi_h^{k+1}-\Phi_h^k) (z)]^\transp [(\Phi_h^{k+1}-\Phi_h^k)(z)].
\]
A repeated application of this identity along with
$[\nabla_h \by_h^0(z)]^\transp [\nabla_h \by_h^0(z)]=\Id_2$ for all $z\in \cN_h$
yields 
\[
\big\|[\Phi_h^{k+1}]^\transp [\Phi_h^{k+1}] - \Id_2 \big\|_{L^1_h(\o)}
\le  c \sum_{\ell=0}^k \|\nabla_h(\by_h^{\ell+1}-\by_h^\ell)\|^2. 
\]
With the help of a Poincar\'e inequality for
$\nabla_h(\by_h^{\ell+1}-\by_h^\ell)$ and \eqref{energy-decrease},
we deduce \eqref{isometry-defect}.
\end{proof}

We end this section by pointing out that the stationary state $\by_h^\infty$ reached
by the gradient flow might not be a discrete (almost) absolute minimizer.
However, assuming that $\by_h^\infty$ is an (almost) absolute minimizer, then Corollary~\ref{C:asb-min} guarantees that the accumulation points when $h\to 0$ are absolute minimizers of the exact energy \cpam{$\tE$.}
 
\section{Numerical Experiments: 
Performance and Model Exploration}\label{S:experiments}

\cpam{Algorithms~\ref{alg:grad_flow_discr} and~\ref{alg:fixed_point} are} implemented using 
the {\tt deal.II} library \cite{BHK:07}. \cpam{All systems of linear equations
arising in the iteration of Algorithm~\ref{alg:fixed_point} where solved directly 
using {\tt UMFPACK} \cite{davis2007umfpack} leaving the discussion on developing efficient solvers open}.
The resulting deformations are visualized with {\tt paraview} \cite{paraview}.
Note that only the displacement degrees of freedom at the vertices of
$\cT_h$ are used for this purpose and the plates are thus displayed as 
continuous piecewise bi-linear elements.
In addition, we say that a plate has reached numerically {an equilibrium state
parametrized by $\by_h^{k+1}$} when
\begin{equation}\label{e:stop_criteria_num}
\frac{|\tE_h[\by_h^{k+1}]-\tE_h[\by_h^k]|}{\tau} \leq 10^{-6},
\end{equation}
where $\tau$ is the gradient flow timestep.
We set $\by_h^\infty:=\by_h^{K+1}$, where $K$ is the smallest 
$k$ that satisfies the above constraint.
{Each pseudo-time iteration $k$ of the gradient flow consists of a 
fixed-point iteration (Algorithm~\ref{alg:fixed_point}).
This inner loop stops when the $(\ell+1)$-th inner iterate satisfies 
$$
\| \nabla \nabla_h (\by_h^{k,\ell+1} - \by_h^{k,\ell})\| \leq \delta_{\text{stop}}
$$
for some $\delta_{\text{stop}}>0$.
It turns out that the number of subiterations experienced in practice is low 
(see for instance Figure~\ref{f:energy-decay}).}
\cpam{For ease of computation} the discrete energies are approximated according 
to 
$$
\tE_h[\by_h] \approx \frac 1 2 \int_\omega |H_h{+Z}|^2,
$$
where $H_h$ is the approximation of the second fundamental form given by
\[
H_{h,i,j}:= \big((\partial_1^h \by_h)_1 \times (\partial_2^h \by_h)_2\big) 
\cdot  \partial_i\partial_j^h \by_h,
\]
without normalization of the vectors $\partial_i^h \by_h$, $i=1,2$,
and $Z$ is given and symmetric. The absence of space dependence in $Z$ 
corresponds to homogeneous materials.

The purpose of the following numerical experiments is twofold. We
first document the performance of Algorithms \ref{alg:grad_flow_discr}
and \ref{alg:fixed_point} by investigating
their behavior for decreasing values of meshsize $h$ and
pseudo-timestep $\tau$ and examining the violation of the isometry
constraint. We also study \cpam{various qualitative properties} of the 
\cpam{dimensionally} reduced model, the existence of local
discrete minimizers other than cylinders, which are \cpam{for appropriate
data and boundary conditions} global minimizers
according to \cite{Schm:07a}, as well as the pseudo-evolution process
(sometimes exhibiting self-intersections).

\cpam{We remark that our underlying nonlinear mathematical 
model allows for the description of large deformations which may be 
nonunique and cannot be expected to admit high regularity. Therefore, a 
meaningful error analysis seems out of reach. For small displacements 
the model leads to a linear bending problem for which best-approximation 
and interpolation results imply convergence rates. An experimental 
convergence analysis for the case of a simple exact solution 
reported below indicates a linear convergence rate.} We document our quantitative
findings in Tables \ref{t:clamped_energy_1}, \ref{t:clamped_isometry_1},
\cpam{and \ref{table:exp_conv_text}} and use the symbol N/A to indicate special 
combinations {of $h$ and $\tau$} for which we did not perform computations 
\cpam{as these appeared to be irrelevant for the discussion and in some 
cases computationally expensive.}

Bilayer bending has technological applications in design and fabrication 
of micro-switches and micro-grippers as well as nano-tubes 
\cite{BALG:10,JSI:00,KLPL:05,SchEb:01,SIL:95}. In these cases it 
is essential that the bilayer plate undergoes a complete folding to a 
cylinder without exhibiting {\it dog-ears} or a {\it corkscrew} shape, 
which may affect or impede the complete folding \cite{ADMA:ADMA19930050905}. 
Better understanding and control of this phenomenon is what motivated this work. 
We describe below several equilibrium configurations other than cylinders.

\subsection{Benchmark}\label{ss:benchmark}

We display in Figure~\ref{f:benchmark}
the pseudo-evolution of a bilayer plate $\omega = (-5,5) \times (-2,2)$, 
clamped on the left-side $\partial_D\o = \{x=-5\} \times [-2,2]$, i.e., 
$$
\by = 0, \qquad \nabla \by = I_{3\times 2} \qquad \text{on} \quad \p_D \o,
$$
with a spontaneous  curvature {$Z=-I_{2}$}.
The finite element partition consists of 5 uniform refinements of the
rectangle $\o$. The parameters \cpam{for Algorithms~\ref{alg:grad_flow_discr} 
and~\ref{alg:fixed_point}} are the pseudo-timestep $\tau=0.005$ and the 
sub-iteration stopping tolerance $\delta_{\text{stop}}=10^{-4}$.
The discrete equilibrium state is a cylinder. 

\begin{figure}[ht!]
\centerline{\includegraphics[width=0.75\textwidth]{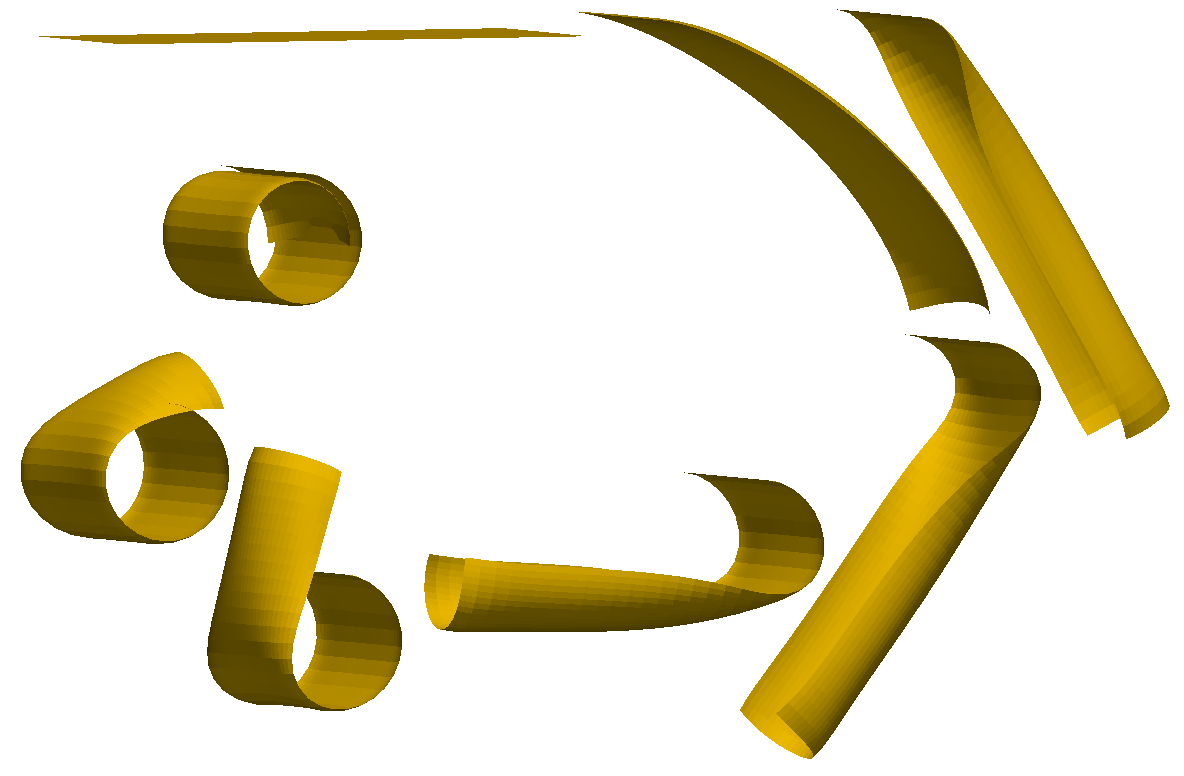}}
\caption{\small 
Pseudo-evolution (clockwise) towards the equilibrium of a
  clamped rectangular plate with spontaneous curvature {$Z=-I_2$}. The
  bilayer plate is depicted (clockwise) {for 0.0, 0.1, 2.4, 60.0, 100.0,
  130.0, 135.0, 207.3 times $10^3$ iterations} of Algorithm~\ref{alg:grad_flow_discr}.
The bilayer plate reaches a cylindrical shape asymptotically (limit of
discrete gradient flow). This is an absolute minimizer.} 
\label{f:benchmark}
\end{figure}

\subsection{Relaxation Process}\label{ss:relaxation}

We consider the clamped plate $\o$ described in
Section~\ref{ss:benchmark} for different spontaneous curvatures {$Z$}
as well as different discretization parameters $h,\tau$. 
The subiterations stopping tolerance for Algorithm
\ref{alg:fixed_point} is $\delta_{\text{stop}}=10^{-3}$.
We examine the relaxation process towards equilibrium for two
different spontaneous curvatures, namely {$Z=-I_2,\,-5 I_2$}.
According to \cite{Schm:07a}, the absolute energy minimizers are
cylinders of height 4 (length of the clamped side) and of radius
$1$ and $1/5$ (reciprocal of the eigenvalues of {$Z$}) with an energy of
$20$ and $500$, respectively. 

Table \ref{t:clamped_energy_1} documents the 
influence of meshsize $h$ and
pseudo-timestep $\tau$ on the equilibrium shape and corresponding
energy. The cases {$Z=-I_2$ and $Z=-5I_2$} are strikingly different. 
For {$Z=-I_2$} the equilibrium shape is
a cylinder (absolute minimizer) provided
$\tau \leq C_0 h$ for a sufficiently small constant $C_0$;
see Figure~\ref{f:equilibrium} (left for $\tau=0.0025$ and middle for
$\tau=0.005$). For {$Z=-5I_2$}, and regardless of the size of $\tau$,
the plate never reaches a
cylinder but other equilibrium configurations (local minimizers) 
with much higher energy than $500$; 
see Figure \ref{f:equilibrium} (right). A plausible explanation is
that the relatively large spontaneous curvature 
in the direction of the clamped side favors bending in such a
direction, thereby creating a geometric obstruction to reaching
a cylindrical shape.

Table \ref{t:clamped_energy_1} also
provides information about the threshold of $\tau$ needed for convergence of the
sub-iterations of Algorithm \ref{alg:fixed_point}. This value, being sensitive to 
$\|Z\|_{L^\infty(\o)}$, is more stringent
for {$Z=-5I_2$}. In fact, such iterations fail to converge for
$\tau=0.02$ and {$Z=-5I_2$, whereas for $Z=-I_2$} give rise to a local
minimizer. This is consistent with Remark~\ref{r:local_min}, which establishes 
the pessimistic thresholds $\tau_0=2.5 \cdot10^{-3}$ for {$Z=-I_2$} and 
$\tau_0=10^{-4}$ for {$Z=-5I_2$} if we {consider} a Poincar\'e constant
$c_P=10$.

\cpam{In addition, we note that the case $\tau = 0.00125$ on the mesh \#6 
and with $Z=-I_2$ yields a cylindrical equilibrium shape with energy of $17.2$ 
(not reported in Table~\ref{t:clamped_energy_1}). 
This, in conjunction with the energies when $\tau=0.005$ on the mesh 
resulting from four refinements and $\tau=0.0025$ on the mesh 
resulting from five refinements, illustrates that 
$
\tE_h[\by_h^\infty] \to E[\by] = 20
$
as $h \to 0$, $h = c \tau$ and $c$ is sufficiently small to obtain cylindrical shapes. 
This is in accordance with Corollary~\ref{C:asb-min}.}

\begin{table}
\begin{tabular}{c||cccc||cccc}
       &  \multicolumn{4}{c||}{ {$Z=-I_2$} } & \multicolumn{4}{c}{
{$Z=-5I_2$} }\\
\diaghead{\theadfont Refinement}{$\tau$}{Mesh} &  \#4  & \#5 & \#6 & \#7 &  \#4
& \#5 & \#6 & \#7 \\
\hline\hline
0.02 & 19.781n  & 20.351n & N/A  &  N/A &  NoC & NoC & N/A  &  N/A \\
0.01 &  19.335n & 20.157n & 20.576n & 19.590n & 575.372 & 521.297 &
536.036 & 586.839 \\
0.005 & 15.961y & 16.554y& 20.343n & N/A  & 567.886 & 519.599& 534.365
& N/A  \\
0.0025 & 15.765y & 16.395y & 17.304y &  18.062y & 581.405 & 518.897 &
N/A &  N/A
\end{tabular}
\medskip
\caption{\small
Equilibrium energies $\tE_h[\by_h^\infty]$ for spontaneous
curvatures {$Z=-I_2, -5I_2$} and different meshsizes $h$ and pseudo-timesteps $\tau$.
The meshes correspond to 4, 5, 6, and 7 uniform refinements of the
plate $\o = (-5,5)\times(-2,2)$. 
The symbol next to the energy values for {$Z=-I_2$}
indicates whether the equilibrium shape is a cylinder (y) or not (n) 
after the stopping test \eqref{e:stop_criteria_num} is met.
Typical equilibria are displayed in Figure \ref{f:equilibrium} (left
and middle, the latter corresponding to a local discrete minimizer).
The numerical experiments indicate that the cylindrical shape (absolute
minimizer) is reached for {$Z=-I_2$} when $\tau$ and $h$ satisfy $\tau \leq C_0 h$ 
for a sufficiently small constant $C_0$. \cpam{This experimental condition
is more restrictive than the theoretically derived condition for 
mere convergence of our numerical scheme.
The symbol NoC for {$Z=-5I_2$} indicates that the sub-iterations of 
Algorithm~\ref{alg:fixed_point}} did not converge. The cylindrical shape is never
reached for {$Z=-5I_2$}; 
see Figure~\ref{f:equilibrium} (right) for a typical equilibrium configuration.}
 \label{t:clamped_energy_1}
\end{table}

\begin{figure}[ht!]
\centerline{\includegraphics[width=0.75\textwidth]{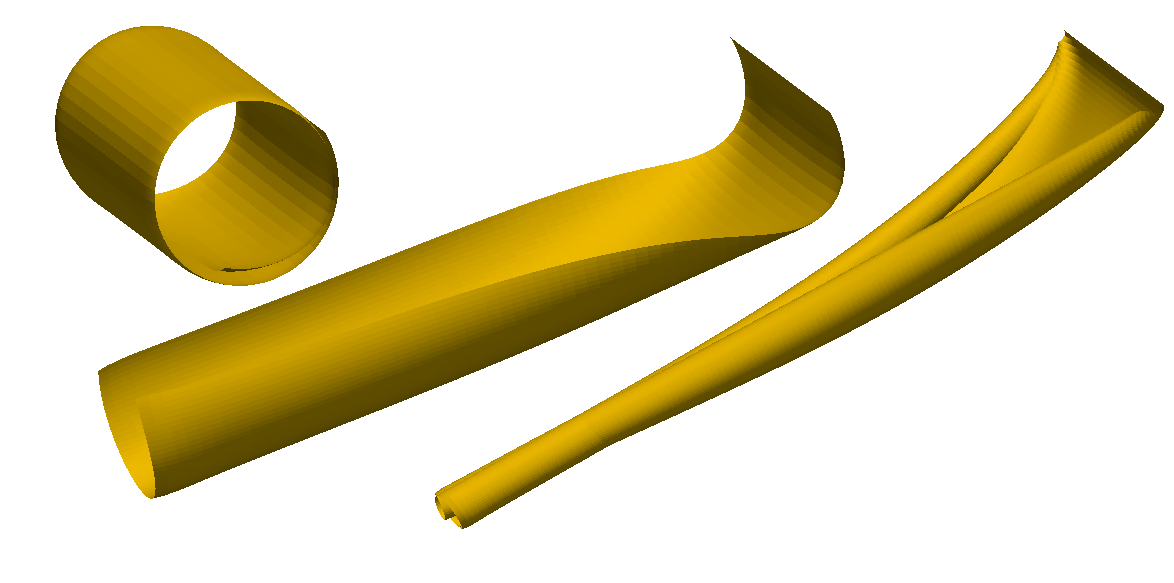}}
\caption{\small
Equilibrium configurations of clamped rectangle plates 
$\o = (-5,5)\times(-2,2)$ described in Section~\ref{ss:benchmark}
for different spontaneous curvatures and numerical parameters.
Left: cylinder shape when \cpam{$Z=-I_{2}$,} mesh refinement 6 and
$\tau=0.0025$; Middle: local minimum when \cpam{$Z=-I_{2}$,} mesh refinement 6
and $\tau=0.005$; Right: local minimizer when \cpam{$Z=-5I_{2}$,} mesh
refinement 7 and $\tau=0.01$.
If the spontaneous curvature is $1$ or smaller, then the cylinder
(absolute minimizer) is 
reached when the pseudo-timestep and the meshsize satisfy 
$\tau \leq C_0 h$ for a sufficiently small constant $C_0$. 
For relatively large spontaneous curvatures, the curvature in the
direction of the clamped side prevents the plate from bending
completely in the orthogonal direction, thereby creating a geometric
obstruction and leading to a (discrete) local minimizer for all
numerical \cpam{parameters} tried.}\label{f:equilibrium}
\end{figure}

\subsection{Asymptotics}\label{S:asymp}

In this section, we illustrate the predicted convergence rate of $O(\tau)$ 
\cpam{for the violation of the isometry constraint} in \eqref{isometry-defect}.
We consider again the clamped plate $\omega=(-5,5)\times(-2,2)$ described in 
Section~\ref{ss:benchmark} with a spontaneous curvature {$Z=-I_2$}.
The space discretizations are subordinate to $4,5,6$ and $7$ uniform
refinements of the initial \cpam{partition of the plate consisting of 1 rectangle}
 and are referred to as mesh $\#4,\#5,\#6$ and $\#7$, respectively.
The sub-iterations stopping tolerance of Algorithm
\ref{alg:fixed_point} is $\delta_{\text{stop}}=10^{-3}$.
The equilibrium isometry defect is defined to be
$$
ID_h(\by_h^\infty):= \frac{\| \lbrack \nabla_h \by_h^\infty \rbrack^\transp  \lbrack \nabla_h \by_h^\infty \rbrack - I_2\|_{L_h^1(\omega)}}{|\omega|}.
$$ 
The results
reported in Table \ref{t:clamped_isometry_1} indicate that the
predicted rate of convergence $O(\tau)$ in Proposition
\ref{prop:grad_flow} is recovered numerically.
This happens for both decreasing time steps $\tau$ on a fixed mesh
(columns 1, 2, and 3) as well as simultaneous reduction of $h$ and $\tau$
while keeping $h\sim \tau$ (diagonal). We stress that, according to
the discussion of Section \ref{ss:relaxation}, not all equilibrium
configurations are cylinders but the experimentally
observed linear rate applies to all of
them. This is consistent with Proposition \ref{prop:grad_flow} which
is not specific to absolute minimizers.

\begin{table}[ht!]
\begin{tabular}{c||cccc}
\diaghead{\theadfont Refinement}{$\tau$}{Mesh} &  \#4  & \#5 & \#6 & \#7 \\
\hline\hline
0.02 & 0.0559 &0.0488 & N/A &  N/A  \\
0.01 &   0.0327 & 0.0283 & 0.0249 &  0.0276 \\
0.005 & 0.0180 & 0.0155 & 0.0139 & N/A \\
0.0025 & 0.0094  & 0.0081 & 0.0077 & 0.0083 
\end{tabular}
\medskip
\caption{\small
Isometry defect $ID_h(\by_h^\infty)$ for the clamped plate $\o=(-5,5)\times(-2,2)$ at equilibrium  with spontaneous curvature {$Z=-I_2$}.
A sequence of time steps $\tau=0.02 \times 2^{-i}$, $i=0,1,2,3$, is
considered for different space resolutions.
A decay rate $O(\tau)$ is observed when the space discretization
remains unchanged (columns 1, 2 and 3) as well as when the meshsize
is reduced to satisfy $h\sim \tau$ (diagonal). Notice that the
isometry defect for a fixed time step is little affected by the space
resolution (rows 2 and 4).} \label{t:clamped_isometry_1}
\end{table}

\cpam{For an experimental convergence test for the deformations
we choose  $\o = (0,2\pi)^2$, define
\[
Z = -\left( 
\begin{array}{cc}
1 & 0 \\
0 & .5 
\end{array}
\right),
\] 
and consider clamped boundary conditions on the left side
$\p_D \o = \{x=0\}\times [0,2\pi]$. An exact solution is
given by 
\[
\by (x,y) = \big(\sin(x), y, 1- \cos(x)\big).
\]
Table~\ref{table:exp_conv_text} shows the scaled $L^2$ and $H^1$ errors
for the stationary configurations computed with 
Algorithms~\ref{alg:grad_flow_discr} and~\ref{alg:fixed_point};
norms were computed with a one-point Gaussian quadrature rule
on every element. 
The stopping criterion for the fixed-point iteration is
$\d_{\rm stop} = 10^{-4}$. The underlying meshes correspond to $\ell=3,4,5,6$ 
refinements of our coarse mesh so that $h_\ell = 2\pi 2^{-\ell}$.
We choose a step size proportional to the mesh size, i.e., 
we set $\tau_\ell = 2^{-\ell}/25$ for $\ell=3,4,5,6$. 
The obtained errors indicate a nearly linear experimental 
convergence rate.}


\begin{table}[ht!]
\begin{tabular}{r||cccc}
\diaghead{\theadfont Refinement}{Error}{Mesh} &  \#3  & \#4 & \#5 & \#6  \\
\hline\hline
$\|\be_h\|/|\o|^{1/2}$ &  0.8250 &  0.4273 &   0.2310 & 0.1220 \\
$\|\nabla \be_h\|/|\o|^{1/2}$ &  0.6723 &  0.3622 &  0.2002 & 0.1077 
\end{tabular}
\medskip
\caption{\small
\cpam{Scaled approximation errors $\be_h =\by-\by_h^\infty$
in an experimental convergence test 
with exact solution $\by$ given by a cylinder of radius~1. 
The approximations $\by_h^\infty$ are obtained with 
Algorithms~\ref{alg:grad_flow_discr} and~\ref{alg:fixed_point}
from a flat initial configuration and timestep sizes proportional
to meshsizes. The numbers indicate a nearly linear experimental
convergence rate.}} \label{table:exp_conv_text}
\end{table}

 \subsection{Effect of Aspect Ratio and Spontaneous Curvature}\label{ss:folding}

This section investigates numerically the influence of 
(i) the spontaneous curvature (i.e. difference in material properties
between the two plates) and (ii) the plates aspect ratio.
The numerical parameters in Algorithm \ref{alg:fixed_point}
are $\tau=0.005$, \cpam{$\delta_{\text{stop}}=10^{-4}$,} and the partition
corresponds to $5$ uniform refinements of the initial plate.

We consider plates $\omega := (-L,L) \times (-2,2)$ with different
lengths $L>0$ and define the aspect ratio to be $\rho:=L/2$.
The plates are clamped on the side $\partial_D\o =\{x=-L\} \times [-2,2]$.
It turns out that the tendency to bend in the clamped direction 
(accentuated for relatively large spontaneous curvatures in the
clamped direction according to Section \ref{S:asymp}) 
is attenuated for small aspect ratios.
We illustrate this in Figure~\ref{f:clamped_R}, which displays almost
equilibrium configurations for $\rho=5/2,3/2,1,1/2$ and spontaneous 
curvatures {$Z=-rI_2$} with $r=1,\cpam{3},5$. 
For large spontaneous curvatures, \cpam{Algorithm~\ref{alg:grad_flow_discr}}
did not always {reach} geometric equilibrium before the stopping test
\eqref{e:stop_criteria_num} was met. In addition, some 
pseudo-evolutions lead to severe folding and exhibit self-intersections, in
which case they are no longer representative from the physics standpoint.

\begin{figure}[ht!]
\centerline{\includegraphics[width=0.75\textwidth]{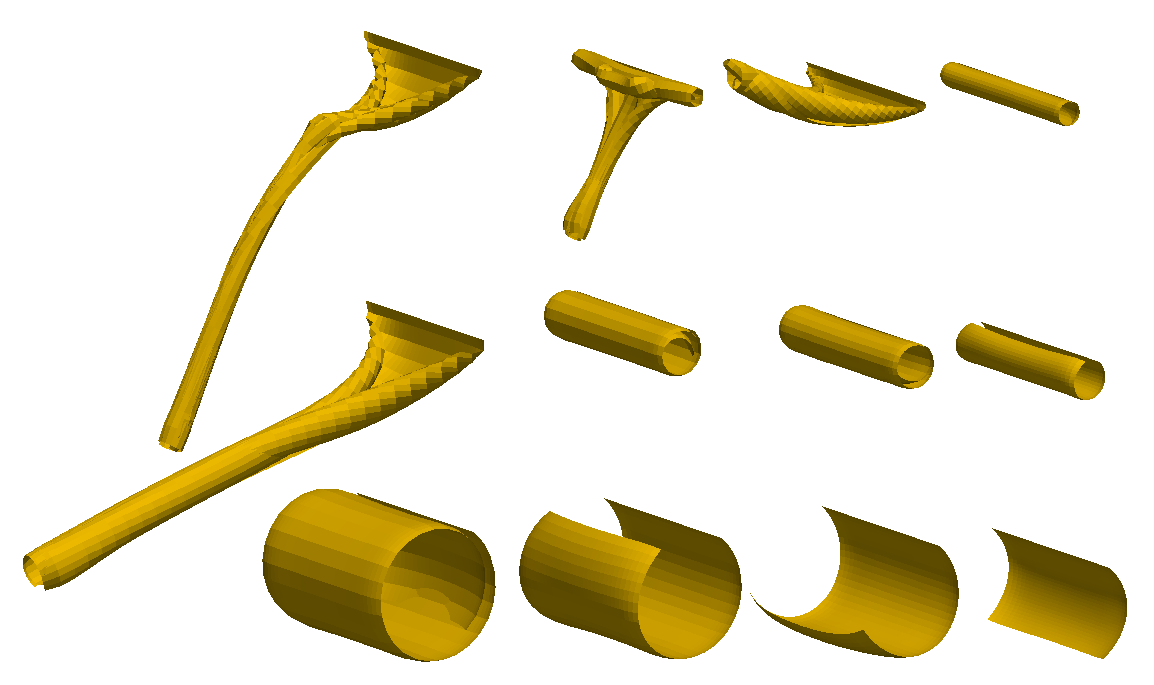}}
\begin{picture}(0,0)
\put(140,10){\small 3.3(4)}
\put(80,10){\small 6.5(8)}
\put(10,10){\small 9.8(12)}
\put(-60,10){\small 16.55(20)}
\put(140,80){\small 33.6(18)}
\put(80,80){\small 67.9(36)}
\put(10,80){\small 103.8(108)}
\put(-60,80){\small 188.6(180)}
\put(140,160){\small 96.7(50)}
\put(80,160){\small 220.4(100)}
\put(10,160){\small 305.3(300)}
\put(-60,160){\small 519.6(500)}
\end{picture}
\caption{\small
Equilibrium shapes of bilayer plates for several aspect-ratios $\rho$
(from left to right $\rho=5/2,3/2,1,1/2$) and spontaneous curvatures
{$Z=-rI_2$} (from top to bottom $r=5,3,1$).
Decreasing the aspect ratio restores the ability for the plate to fold 
into a cylindrical shape for larger spontaneous curvatures.
For instance, this is the case for plates with parameters $r=3$ and 
$\rho=3/2$ or $r=5$ and $\rho =1/2$.
\cpam{Notice, however, that small regions around the free corners have not completely relaxed to equilibrium. 
This effect is due to the violation of the isometry constraint and reduced upon decreasing the discretization parameters as well as the stopping criteria.}
The numbers below each stationary configuration are the 
corresponding approximate energies. \cpam{For comparison, the energies of corresponding plates with principal curvatures of $\frac 1 r$ and $0$ are given between parenthesis.} 
}
\label{f:clamped_R}
\end{figure}

\subsection{Boundary Conditions and Shapes}\label{ss:boundary}

We consider now different boundary conditions and plate
shapes. We intend to examine the robustness of \cpam{our numerical scheme}
in different situations and investigate plate shapes which are not studied
in \cite{Schm:07a} and for which we do not know the absolute
minimizers.

\medskip
{\it Boundary conditions:} 
We start with the plate $\omega = (-3,3) \times (-2,2)$ clamped in a
neighborhood of the bottom left corner, namely
{$\partial_D\o=\lbrace x=-3 \rbrace \times (-2,0) \cup (-3,0) \times \lbrace y=-2 \rbrace$}.
We impose the spontaneous curvature {$Z=-I_2$}, choose the numerical
parameters $\tau=0.005$,  $\delta_{\text{stop}}=10^{-4}$ in Algorithm
\ref{alg:fixed_point}, and use a partition of $\o$ with 5 uniform refinements.
Several intermediate shapes of the discrete
gradient flow are depicted in Figure~\ref{f:cornerclamped}. \cpam{The
equilibrium configuration consists of a flat and a cylindrical part separated
by a free boundary that connects points on the boundary at which boundary 
conditions change.}

\begin{figure}[ht!]
\centerline{\includegraphics[width=0.3\textwidth]{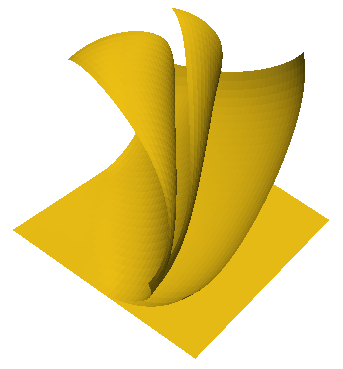}}
\begin{picture}(0,0)
\thicklines
\put(110,110){\qbezier(0,0)(-10,40)(-50,50)}
\put(55,163){\vector(-2, 1){0}}
\end{picture}
\vskip-0.8cm
\caption{\small
Different snapshots of the deformed corner-clamped plate with
spontaneous curvature {$Z=-I_2$}.
The equilibrium shape has energy $11.616$ and is not a cylinder.
It is worth comparing with the side-clamped plate discussed in 
Section~\ref{ss:folding}, which leads to a cylinder of smaller 
approximate energy $9.81$.} \label{f:cornerclamped}
\end{figure}

\medskip
{\it Shapes:}
We now consider the I-shaped and the O-shaped plates depicted in Figure~\ref{f:IO}.
The finite element meshes contain $7168$ and $8192$ quasi-uniform rectangles respectively.
We set {$Z=-5I_2$}, $\tau=0.005$ and $\delta_{\text{stop}}=10^{-3}$. 
Relaxation toward numerical equilibrium shapes, which are not
cylinders, is depicted for both plates in Figure \ref{f:IOeq}. We
stress that for the I-shaped plate the curvature in the clamped direction
dominates the other, thereby leading to a cigar shape that opens up at
the bottom to accomodate the boundary condition. In contrast, the
O-shaped plate is more rigid to bending and develops {\it dog-ears} at
the free corners which prevent further bending.

\begin{figure}[ht!]
\centerline{\hfill \includegraphics[width=0.3\textwidth]{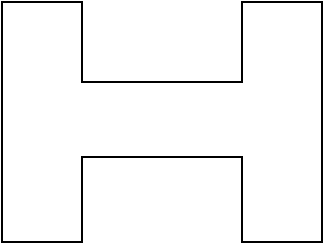}\hfill \includegraphics[width=0.3\textwidth]{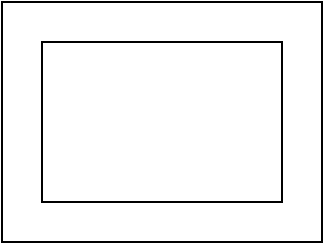}\hfill}
\begin{picture}(0,0)
\put(100,0){$10$}
\put(100,40){$\frac {20} 3$}
\put(55,60){$\frac {8} 3$}
\put(20,60){$4$}
\put(-180,60){$4$}
\put(-157,0){$\frac 52$}
\put(-130,30){$\frac 34$}
\put(-105,55){$5$}
\end{picture}
\caption{\small
Geometry of the I-shaped and O-shaped plates: the numbers indicate the length of the sides.
In both cases, the clamped edge is the far left vertical edge.}\label{f:IO}
\end{figure}
\begin{figure}[ht!]
\centerline{\hfill \includegraphics[width=0.3\textwidth]{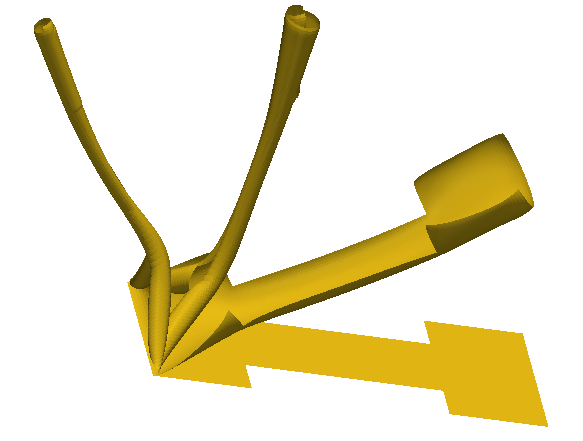} \hfill
 \includegraphics[width=0.3\textwidth]{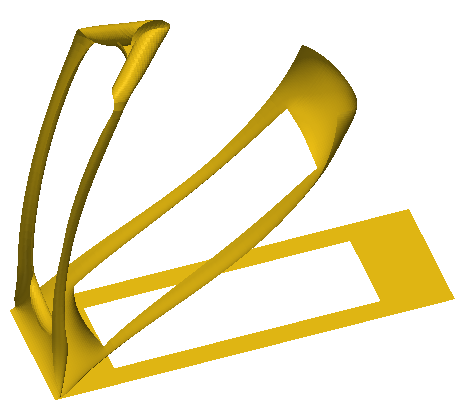}
\hfill
}
\begin{picture}(0,0)
\thicklines
\put(180,80){\qbezier(0,0)(-10,40)(-50,50)}
\put(125,133){\vector(-2, 1){0}}
\put(-10,80){\qbezier(0,0)(-10,40)(-50,50)}
\put(-65,133){\vector(-2, 1){0}}
\end{picture}
\vskip-0.5cm
\caption{\small
Different snapshots of the deformed I-shaped plate (left) and
  O-shaped plate (right).
{The corresponding stationary energies with spontaneous curvatures $Z=-5
I_2$ are 404.57 and 314.152 respectively or about $12.448$ and
$14.137$ relative to the plate areas.
For comparison, the numerical stationary energy of a plate
$\omega=(-5,5) \times (-2,2)$ under the same boundary condition and
spontaneous curvature is $519.6$ or $12.99$ once divided by the plate area  (see Figure~\ref{f:clamped_R}).
It turns out that compared to the full plate,  the stationary
numerical energy per unit area 
is smaller for the I-shaped plate and greater for the O-shaped plate.}} \label{f:IOeq}
\end{figure}

\subsection{Anisotropic Spontaneous Curvature}
We now turn our attention to anisotropic spontaneous curvatures, namely
to matrices {$Z$} with different eigenvalues. In the first two examples
the eigenvectors are aligned with the coordinate axis, but not in the
third example. The spontaneous curvature is given by either
\begin{equation}\label{e:anisotropic}
{Z=\left( 
\begin{array}{cc}
-5 & 0 \\
0 & -a 
\end{array}
\right),
\qquad
Z=\left( 
\begin{array}{cc}
-3 & 2 \\
2 & -3 
\end{array}
\right),}
\end{equation}
with $a=1$ or $-5$. The plate  is {$\o=(-2,2)\times(-3,3)$} and the
numerical parameters in Algorithm
\ref{alg:fixed_point} are $\tau=0.005, \delta_{\text{stop}}=10^{-3}$.

\medskip
{\it Dominant curvature:} 
With $a=1$ being the curvature in the clamped direction, we notice
a rather minimal bending effect in such a direction.
The plate pseudo-evolutions are displayed in Figure \ref{f:anise-1} 
which shows an almost perfect rolling to a cylinder {of energy $42.09$}.
\begin{figure}[ht!]
\centerline{\includegraphics[width=0.5\textwidth]{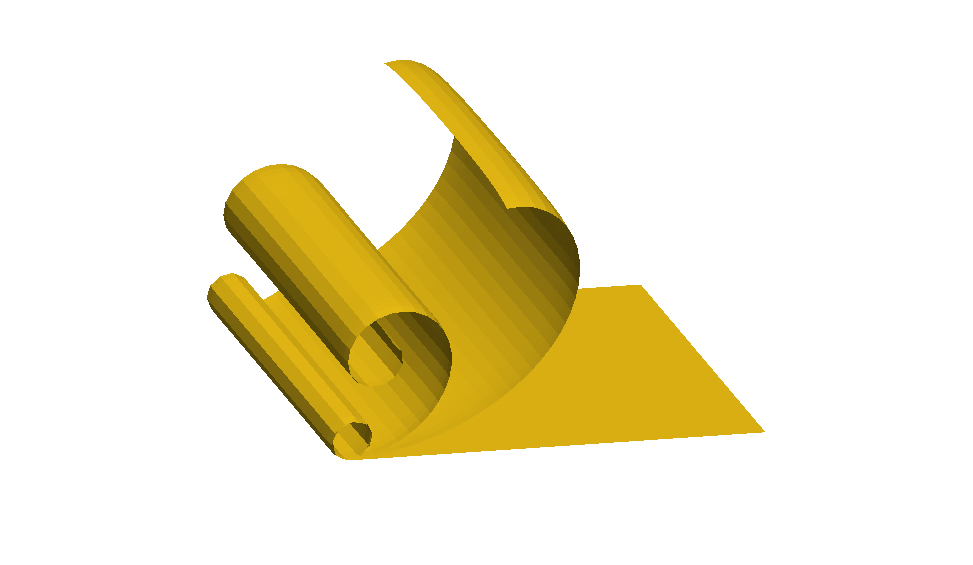}}
\begin{picture}(0,0)
\thicklines
\put(80,50){\qbezier(0,30)(-10,60)(-30,60)}
\put(45,110){\vector(-2, 0){0}}
\end{picture}
\vskip-0.5cm
\caption{\small
Deformation of a plate with anisotropic curvature given by
\eqref{e:anisotropic} with $a=1$. The spontaneous curvature is $1$ in
the clamped direction, its effect being barely noticeable, whereas it
is $5$ in the orthogonal direction.
The equilibrium shape is a cylinder (absolute minimizer) {with an energy of $42.09$}. Compare with
the case $a=5$, presented in {Section~\ref{ss:folding}}, for which
the cylindrical shape is not achieved before the stopping test 
\eqref{e:stop_criteria_num} is met.} \label{f:anise-1}
\end{figure}
\begin{figure}[ht!]
\centerline{\includegraphics[width=0.7\textwidth]{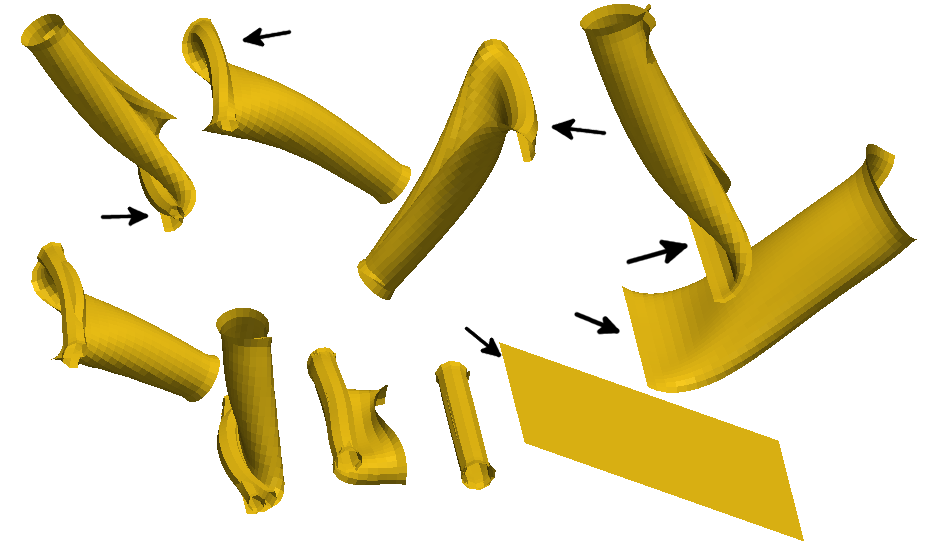}}
\caption{\small
Deformation of a plate with anisotropic curvature given by
  \eqref{e:anisotropic} with $a=-5$. The spontaneous curvatures are $-5$ in the
  clamped direction and $5$ in the perpendicular direction, which
  eventually dominates the former and leads to a cylindical shape
  after three full rotations. Snapshots are displayed
  counterclockwise, starting from the bottom right, 
 {for 0.0, 0.3, 2.0, 4.0, 6.0, 10.0, 15.0, 18.0, 25.0, 172.0 times $10^3$
  iterations}  of Algorithm \ref{alg:grad_flow_discr}. 
  The arrows indicate the clamped side.} \label{f:anise-neg}
\end{figure}

{\it Curvatures with opposite signs:}
We take $a=-5$ to be the curvature in the clamped direction. This choice
models the tendency of the plate to bend equally in opposite directions along
the coordinate axes (principal directions). This is noticeable in the
second and third snapshots in Figure~\ref{f:anise-neg},
the latter also exhibiting self-crossing of the free corners.
After three complete rotations, the plate relaxes to a cylindrical
shape (absolute minimizer). 
\cpam{Surprisingly, a cylindrical shape is reached before
the stopping test \eqref{e:stop_criteria_num} is met, unlike 
the case $a=5$ (see first row - second column in Figure~\ref{f:clamped_R}).}

\medskip
{\it Corkscrew shape:}
We consider now the second anisotropic spontaneous curvature {$Z$} in 
\eqref{e:anisotropic}, which has eigenpairs
\[
\mu_1=5\quad\be_1=[1,-1]^T,\qquad
\mu_2=1\quad\be_2=[1,1]^T.
\]
This means that we still have principal curvatures $5$ and $1$ but
with principal directions $\be_1$ and $\be_2$ forming the angle
$\pi/4$ with the coordinate axes.
The deformation of this plate towards its equilibrium shape is
displayed in Figure~\ref{f:aniserot}.
\cpam{The plate exhibits a {\it corkscrew} shape before self-intersecting 
and continuing its deformation to a conic shape.
In fact, a cylindrical shape is not reached before the stationarity test 
\eqref{e:stop_criteria_num} is met.
We emphasize that this is not in contradiction with \cite{Schm:07a}, where scalar spontaneous curvatures are considered, and shed some light on equilibrium configurations when the two principal spontaneous curvature directions are not parallel and orthogonal to the clamped side. 
Notice, however, that the equilibrium energy obtained is {$61.31$}, which is larger than the cylindrical shape obtained when the principal direction aligned with the coordinate axes (see Figure~\ref{f:anise-1})}.

\begin{figure}[ht!]
\begin{center}
\begin{tabular}{ccc}
\includegraphics[width=0.25\textwidth]{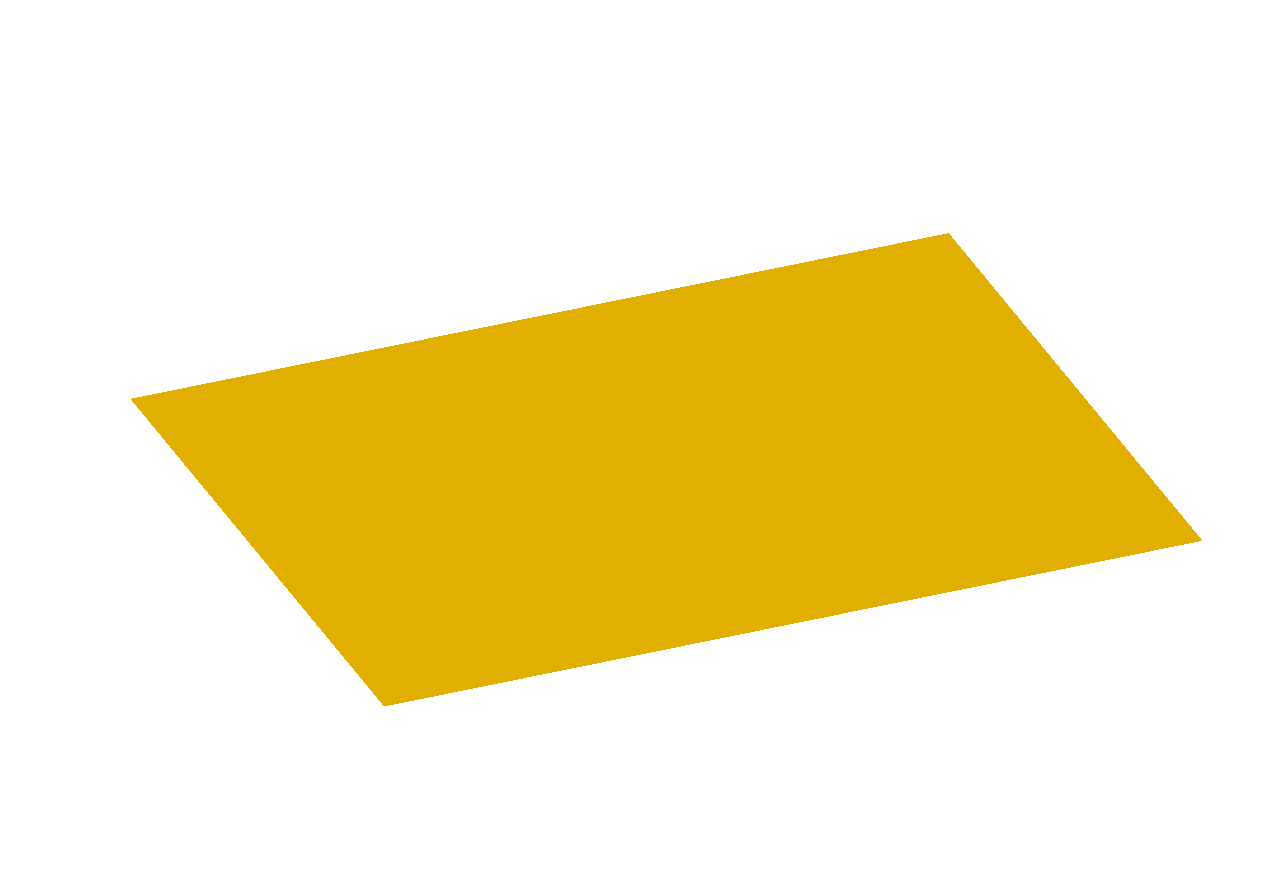}   &  \includegraphics[width=0.25\textwidth]{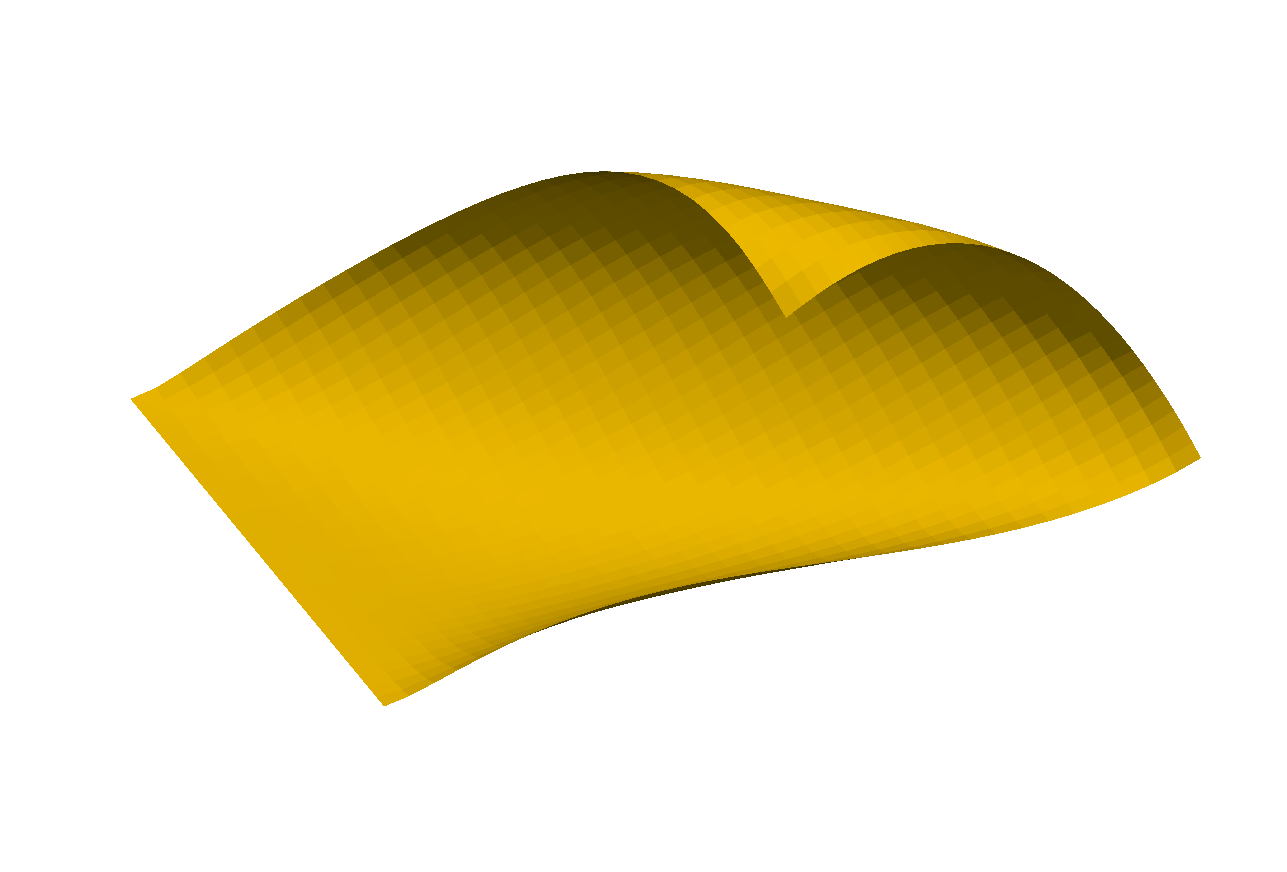}
&   \includegraphics[width=0.25\textwidth]{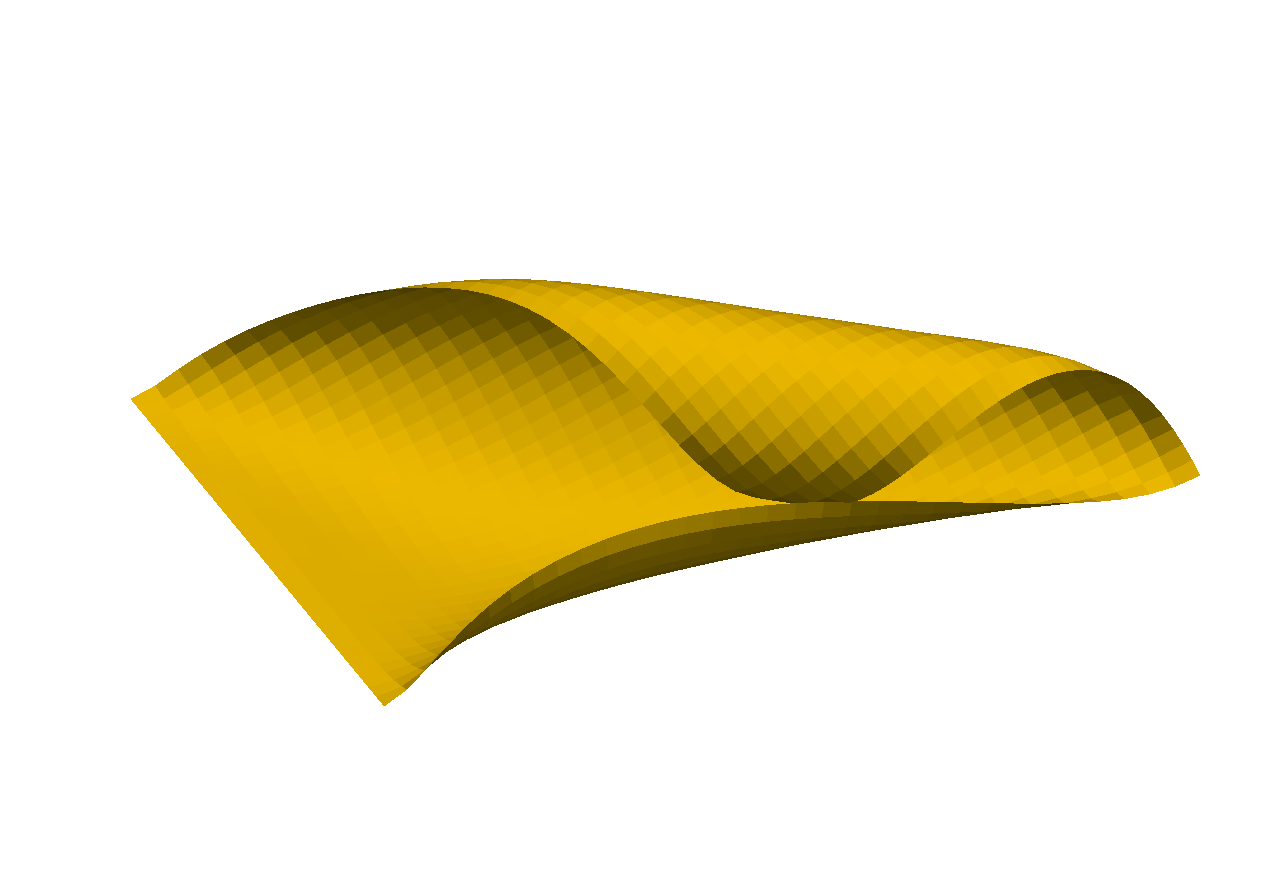} \\ 
\includegraphics[width=0.25\textwidth]{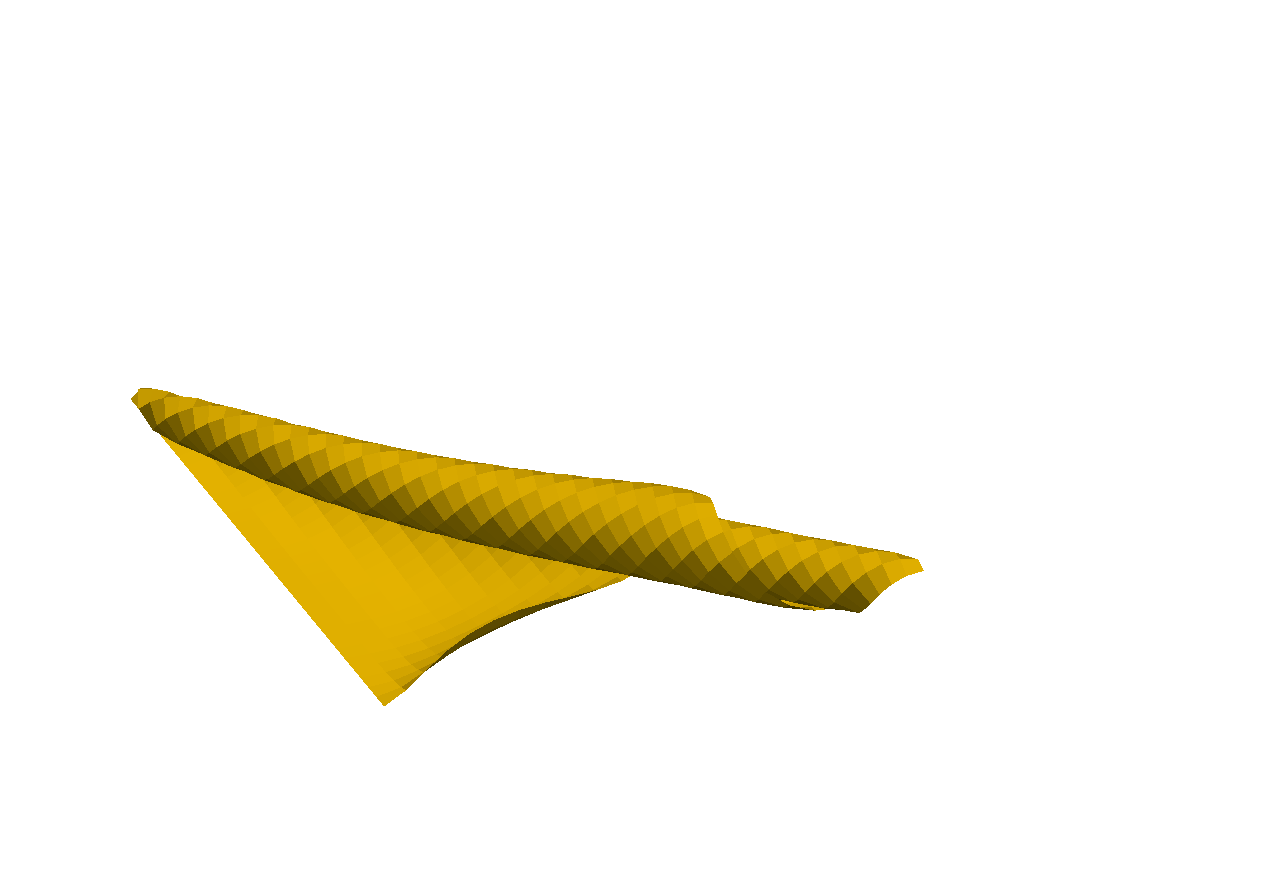}  &  \includegraphics[width=0.25\textwidth]{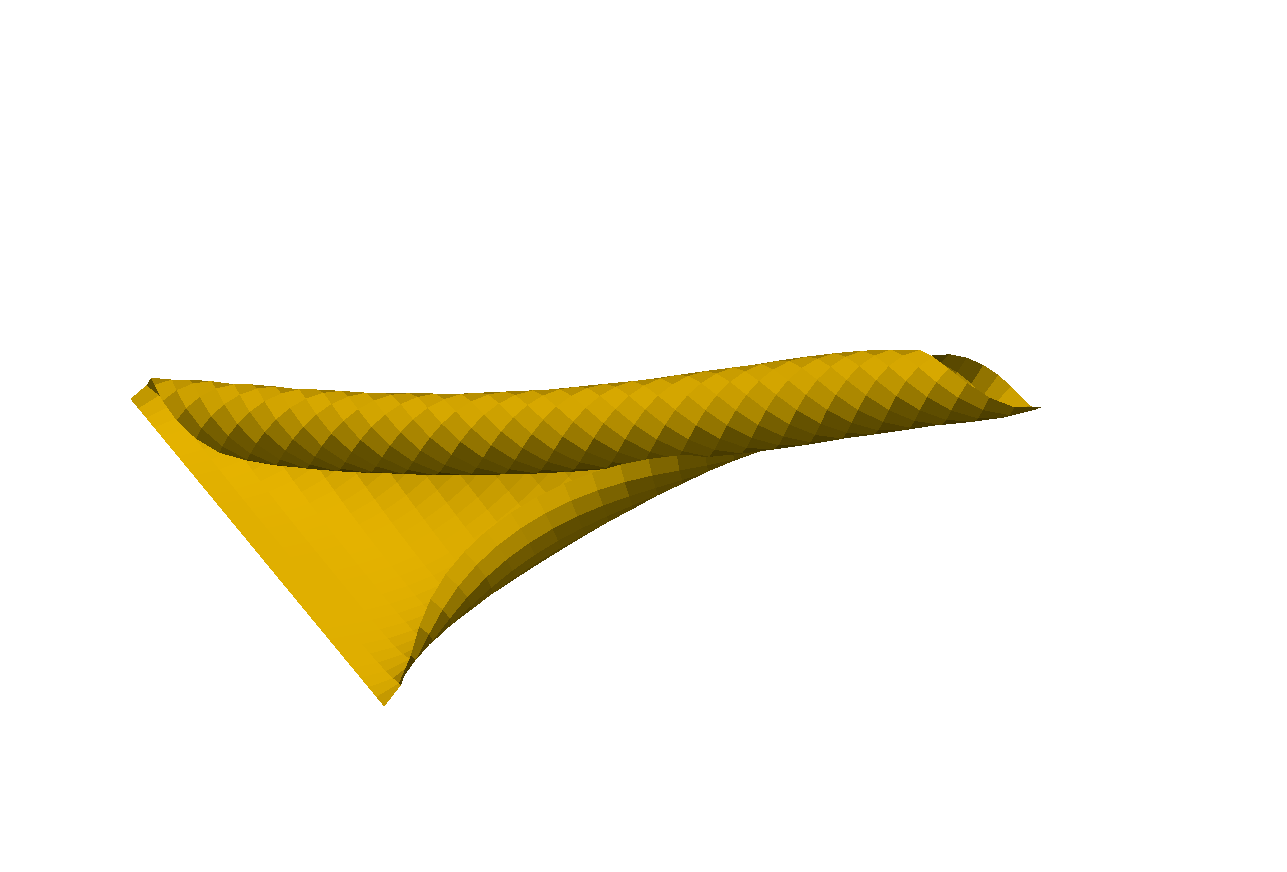}&   
 \includegraphics[width=0.25\textwidth]{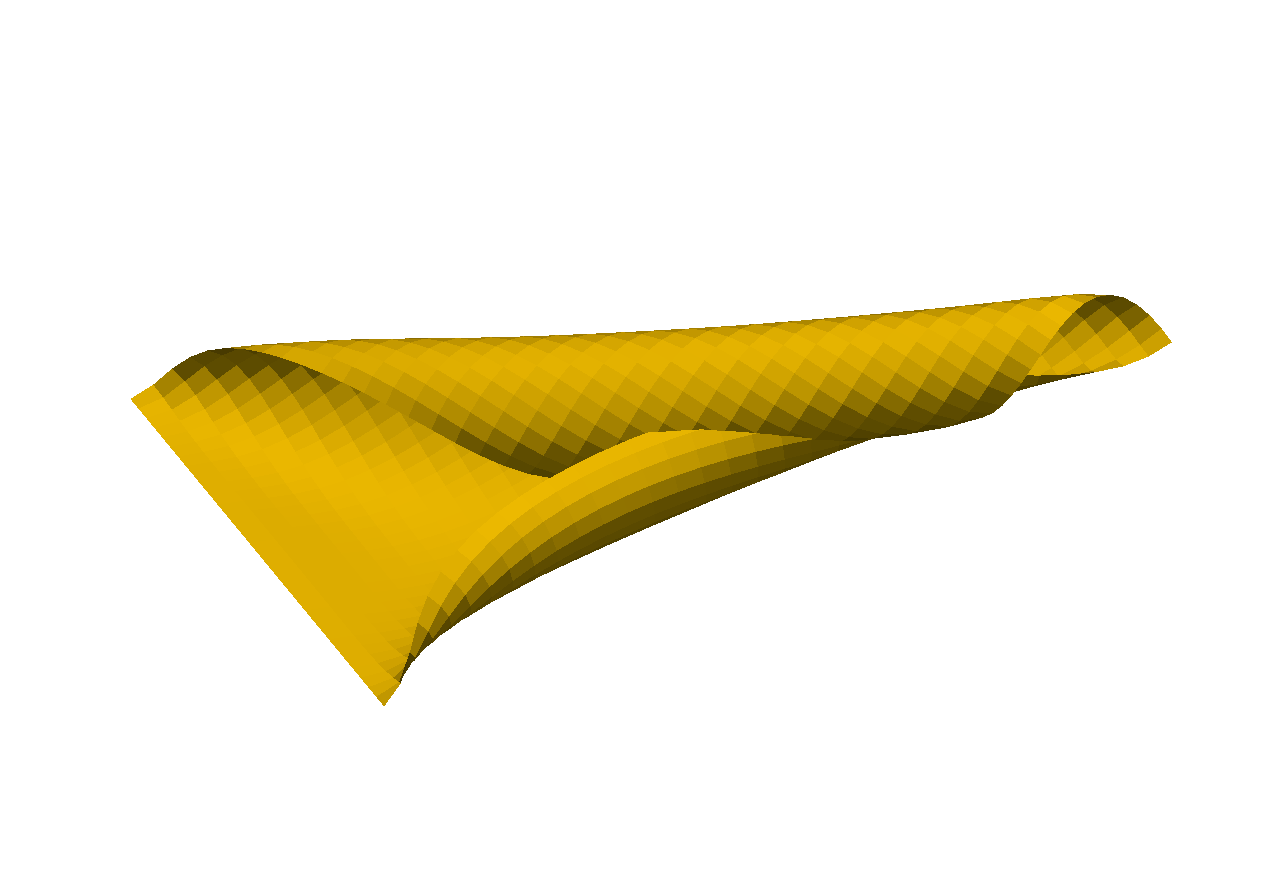} 
\end{tabular}
\end{center}
\begin{picture}(0,0)
\thicklines
\put(-80,160){\vector(2,0){30}}
\put(50,160){\vector(2,0){30}}%
\put(130,110){\vector(0,-2){30}}%
\put(-60,30){\vector(-2,0){30}}
\put(70,30){\vector(-2,0){30}}%
\end{picture}
\caption{\small
{Deformation of a plate with the second anisotropic curvature {$Z$}
in \eqref{e:anisotropic}. The principal curvatures are $5$ and $1$ but
the principal directions form an angle $\pi/4$ with the coordinate axes.
The snapshots are displayed clockwise starting at the top
left, for {0.0, 0.1, 0.4, 1.9, 3.0, 324.0 times $10^3$ iterations} of
Algorithm \ref{alg:grad_flow_discr}.
The plate adopts a corkscrew shape before self-intersecting.} }
\label{f:aniserot} 
\end{figure}

\subsection{Energy Decay and Time Scales}

One critical aspect missing in this study
is the design of (pseudo)-time adaptive algorithms 
able to cope with the disparate time scales inherent to the 
underlying energies.
To illustrate this point, we plot in Figure~\ref{f:energy-decay}
the energy decay of the clamped plate $\o$ of
Section~\ref{ss:benchmark} for spontaneous curvatures {$Z=-I_2$ and $Z=-5I_2$}.
Both energies exhibit a rapid decay at the very beginning of 
the deformation and very slow decay at the end. 
The numerical parameters of Algorithm \ref{alg:fixed_point}
used for these simulations are 
$\tau=0.005$, $\delta_{\text{stop}}=10^{-4}$ and the finite element
partition corresponds to 5 uniform refinements of the original plate.

\begin{figure}[ht!]
\centerline{\includegraphics[width=0.5\textwidth]{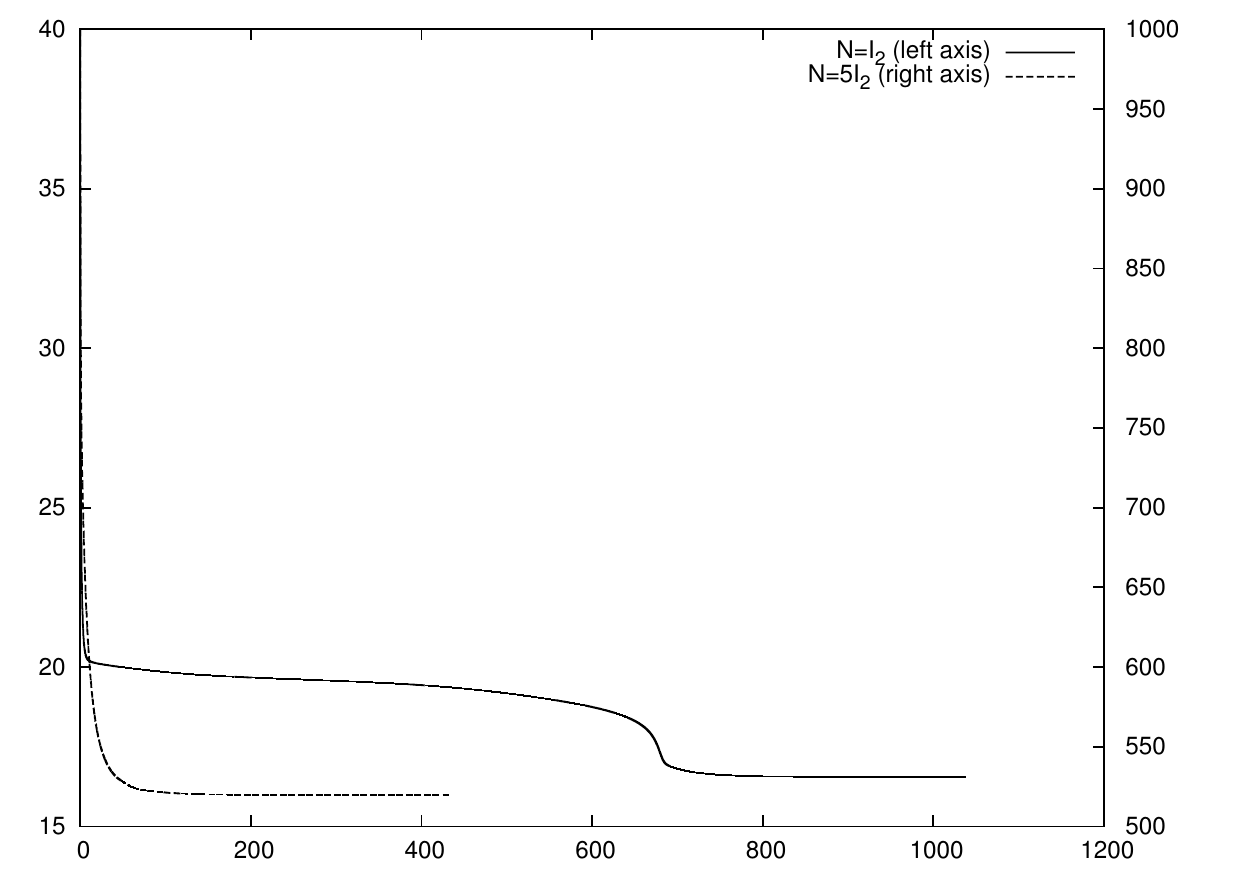}}
\caption{\small
Energy decay versus pseudo-time for {$Z=-rI_2$} with $r=1$ and $r=5$. 
The cylindrical shape is reached when $r=1$ after 207,000 pseudo-timesteps 
(total of 210.469 solves accounting for the sub iterations). 
When $r=5$, the equilibrium shape is reached faster after 86.600 pseudo-timesteps  (total of 129.682 solves accounting for the sub iterations).
However, the equilibrium reached is not a cylinder as already pointed out in Sections~\ref{ss:relaxation} and \ref{ss:folding}; see for instance Figure \ref{f:equilibrium}.
The energy decays fast at the very beginning of the relaxation process
in both cases. In addition when $r=1$, a second rapid decay arises with the unfolding
in the clamped direction; see iteration 130,000 in Figure
\ref{f:benchmark} (6th snapshot).}\label{f:energy-decay}
\end{figure}


\bigskip
{\bf Acknowledgements.}
We are indebted to S. Conti who suggested a reduced model leading to
that of Section \ref{S:model-reduction}. We are also thankful to
E. Smela who stimulated our curiosity to study folding patterns of bilayer plates
via several laboratory experiments and discussions.
Finally, we express our gratitude to W. Bangerth
for participating in several discussions regarding the implementation of the Kirchhoff quadrilaterals with deal.II \cite{BHK:07}.


\bibliographystyle{acm}

\end{sloppypar} 
\end{document}